\setlist[enumerate]{leftmargin=*}
\setlist[itemize]{leftmargin=2em}
\newtheorem{thm}{Theorem}[section]
\newtheorem{lem}[thm]{Lemma}
\newtheorem{prp}[thm]{Proposition}
\newtheorem{cor}[thm]{Corollary}
\theoremstyle{definition}
\newtheorem{rem}[thm]{Remark}
\numberwithin{equation}{section}
\newcommand{\NN}{\mathbb{N}}
\newcommand{\ZZ}{\mathbb{Z}}
\newcommand{\RR}{\mathbb{R}}
\newcommand{\CC}{\mathbb{C}}
\newcommand{\opL}{\mathcal{L}}
\newcommand{\Rz}{\mathcal{R}}
\newcommand{\Sz}{\mathcal{S}}
\newcommand{\Df}{\mathcal{D}}
\newcommand{\opH}{\mathcal{H}}
\newcommand{\Four}{\mathcal{F}}
\newcommand{\Part}{\mathcal{P}}
\newcommand{\tc}{\,:\,}
\newcommand{\defeq}{\mathrel{:=}}
\DeclareMathOperator{\arccosh}{arccosh}
\DeclareMathOperator*{\esssup}{ess\,sup}
\title{Riesz transforms on $ax+b$ groups}
\author{Alessio Martini}
\address{Dipartimento di Scienze Matematiche ``G. L. Lagrange'' \\ Dipartimento di Eccellenza 2018-2022 \\ Politecnico di Torino \\ Corso Duca Degli Abruzzi 24 \\ 10129 Torino \\ Italy}
\email{alessio.martini@polito.it}
\thanks{The financial support of the Compagnia di San Paolo is gratefully acknowledged. The author is a member of the Gruppo Nazionale di Analisi Matematica, Probabilit\`a e Applicazioni (GNAMPA) of the Istituto Nazionale di Alta Matematica (INdAM)}
\begin{document}
\begin{abstract}
We prove the $L^p$-boundedness for all $p \in (1,\infty)$ of the first-order Riesz transforms
$X_j \opL^{-1/2}$ associated with the Laplacian $\opL = -\sum_{j=0}^n X_j^2$ on the $ax+b$-group $G = \RR^n \rtimes \RR$; here $X_0$ and $X_1,\dots,X_n$ are left-invariant vector fields on $G$ in the directions of the factors $\RR$ and $\RR^n$ respectively. This settles a question left open in previous work of Hebisch and Steger (who proved the result for $p \leq 2$) and of Gaudry and Sj\"ogren (who only considered $n=1=j$).
The main novelty here is that we can treat the case $p \in (2,\infty)$ and include the Riesz transform 
in the direction of $\RR$; an operator-valued Fourier multiplier theorem on $\RR^n$ turns out to be key to this purpose.
We also establish a weak type $(1,1)$ endpoint for the adjoint Riesz transforms in the direction of $\RR^n$.
By transference, our results imply the $L^p$-boundedness for $p \in (1,\infty)$ of the first-order Riesz transforms 
associated with the Schr\"odinger operator $-\partial_s^2 + e^{2s}$ on the real line.
\end{abstract}
\keywords{Lie group, nondoubling manifold, Riesz transform, singular integral operator}
\subjclass[2020]{22E30, 42B20, 42B30}

\maketitle

\section{Introduction}

Let $G = \RR^n \rtimes \RR$, where $\RR$ acts on $\RR^n$ via dilations. If we write the elements of $G$ as $(x,u)$, where $x \in \RR^n$ and $u \in \RR$, then the group operation is given by
\begin{equation}\label{eq:operation}
(x,u) \cdot (x',u') = (x+ e^u x', u+u').
\end{equation}
The group $G$ is isomorphic to the group of transformations of $\RR^n$ generated by translations and dilations, also known as the $ax+b$ group (see, e.g., \cite[Section 6.7]{F}).
We equip $G$ with the right Haar measure, that is, the Lebesgue measure $dx \, du$. A basis of left-invariant vector fields on $G$ is given by
\begin{equation}\label{eq:vfs}
X_0 = \partial_u, \quad X_1 = e^u \partial_{x_1}, \quad \dots, \quad X_n = e^u \partial_{x_n}.
\end{equation}
The corresponding sum-of-square operator
\[
\opL = -\sum_{j=0}^n X_j^2 = -\partial_u^2 - e^{2u} \Delta_x
\]
is essentially self-adjoint on $L^2(G)$. If we equip $G$ with the natural metric structure associated with $\opL$, namely, the left-invariant Riemannian metric for which the vector fields \eqref{eq:vfs} form an orthonormal frame, then $G$ with the Riemannian distance and the Haar measure is a metric measure space of exponential volume growth.

In this work, we are interested in Lebesgue boundedness properties of the first-order Riesz transforms
\[
\Rz_j = X_j \opL^{-1/2}, \qquad j=0,\dots,n,
\]
associated with the left-invariant Laplacian $\opL$ on $G$. Our first main result reads as follows.

\begin{thm}\label{thm:main}
For $j=0,\dots,n$, the Riesz transform $\Rz_j$ is bounded on $L^p(G)$ for all $p \in (1,\infty)$.
\end{thm}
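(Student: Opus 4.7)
For $p=2$ the boundedness is immediate from the spectral theorem via the identity $\sum_{j=0}^n \|\Rz_j f\|_2^2 = \|f\|_2^2$, while for $p \in (1,2]$ the statement is already covered by the cited work of Hebisch--Steger. The task is therefore the range $p \in (2,\infty)$. Since $G$ carries its right Haar measure and the $X_j$ are left-invariant, a direct computation gives $X_j^* = -X_j$, so $\Rz_j^* = -\opL^{-1/2} X_j$. By duality the problem reduces to proving $L^{p'}$-boundedness of $\opL^{-1/2} X_j$ for the conjugate exponent $p' \in (1,2)$, and I would split the argument according to whether $j = 0$ or $j \in \{1,\dots,n\}$.

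For $j = 1, \dots, n$, I would establish a weak type $(1,1)$ endpoint for $\opL^{-1/2} X_j$ and interpolate with the $L^2$ estimate. The natural framework is the Calder\'on--Zygmund theory on exponential-growth spaces developed by Hebisch--Steger for $G$, based on an atomic decomposition indexed by admissible sets whose scale is tied to the modular function $e^u$. Off-diagonal bounds for $\opL^{-1/2} X_j$ would be obtained by inserting the subordination formula $\opL^{-1/2} = \pi^{-1/2}\int_0^\infty t^{-1/2} e^{-t\opL}\,dt$ and invoking the sharp heat-kernel estimates known on $G$. The cancellation of the atoms against $\opL^{-1/2} X_j$ would be exploited by transferring $X_j$ onto the atoms via integration by parts in the $\RR^n$ variable, where $X_j = e^u \partial_{x_j}$ integrates harmlessly.

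For $j = 0$ this strategy breaks down, since $X_0 = \partial_u$ points in the direction of volume growth and no comparable integration by parts is available on the atoms. Instead, I would exploit the fact that $X_0$ commutes with the $\RR^n$-translations and take the partial Fourier transform in the $x$ variable: this diagonalizes $\opL$ into the family of fibrewise Schr\"odinger operators $\opL_\xi = -\partial_u^2 + e^{2u}|\xi|^2$ on $L^p(\RR)$, each of which, after the translation $u \mapsto u - \log|\xi|$, is unitarily equivalent to the fixed operator $H = -\partial_s^2 + e^{2s}$. Consequently $\Rz_0$ becomes an operator-valued Fourier multiplier on $\RR^n$ whose symbol $T_\xi$ is, up to a translation, the single one-dimensional Schr\"odinger Riesz transform $T = \partial_s H^{-1/2}$. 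The scheme is then: (i) prove $L^p(\RR)$-boundedness of $T$ for $p \in (1,\infty)$ by classical techniques for one-dimensional Schr\"odinger Riesz transforms; (ii) establish R-boundedness of $\{T_\xi\}$ together with the $\xi$-derivatives required by a Mihlin--H\"ormander condition, using that $L^p(\RR)$ is a UMD space; (iii) apply the Weis operator-valued multiplier theorem on $\RR^n$ with values in $L^p(\RR)$, identifying $L^p(G) \cong L^p(\RR^n; L^p(\RR))$.

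The main technical obstacle, I expect, is step (ii). Differentiating $T_\xi$ in $\xi$ produces compositions of spectral functions of $H$ with multiplication operators in $s$, and these must be controlled in the R-bounded sense with the correct decay in $|\xi|$. A promising route is to reduce matters, through the functional calculus of $H$, to R-boundedness of canonical families such as the imaginary powers $\{H^{it}\}_{t\in\RR}$ or the wave propagator $\{e^{it\sqrt{H}}\}_{t\in\RR}$, which can be treated by known spectral multiplier or dispersive estimates for this specific Schr\"odinger operator.
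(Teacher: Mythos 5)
Your high-level architecture — duality to reduce $p>2$ to $p'<2$, and an operator-valued Fourier multiplier theorem on $\RR^n$ diagonalising $\opL$ into a family of Schr\"odinger operators $\opH^\xi$ — is indeed the route the paper takes (Sections~\ref{s:local}--\ref{s:multiplier}), and the identification of each $\opH^\xi$ with a translate of a fixed $\opH=-\partial_s^2+e^{2s}$ is the same observation that powers Proposition~\ref{prp:opval_fourier_multiplier}. However, your step~(i) for $j=0$ contains a circularity that breaks the argument: you propose to first establish $L^p(\RR)$-boundedness of $T=\partial_s\opH^{-1/2}$ ``by classical techniques for one-dimensional Schr\"odinger Riesz transforms,'' and then build R-boundedness of $\{T_\xi\}$ on top of it. But no such classical result exists for this exponentially growing potential with $\liminf_{s\to-\infty}V(s)=0$; the $L^p$-boundedness of $\partial_s\opH^{-1/2}$ for $p>2$ is precisely the content of Theorem~\ref{thm:schroedinger_riesz}, which the paper obtains \emph{as a consequence} of Theorem~\ref{thm:main} by transference, not as an input. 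The paper instead avoids ever invoking a one-dimensional $L^p$ result: it isolates the main part of the kernel at infinity (Proposition~\ref{prp:kernel_reduction}), writes explicit integral kernels $H_{K_j}^\xi$ for the resulting symbols, and proves uniform $A_2$-weighted $L^2(\RR)$ bounds directly (Propositions~\ref{prp:A2bound}, Lemma~\ref{lem:simple_boundedness}); R-boundedness on $L^p(\RR)$ then follows from \cite[Theorem 8.2.6]{HvNVW2}, and $L^p$-boundedness of the one-dimensional fibre is a by-product, not a prerequisite. Your proposed route to R-boundedness via $\{H^{it}\}$ or $\{e^{it\sqrt{H}}\}$ would face exactly the same circularity.

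Two further points are worth flagging. First, you apply the multiplier theorem to the \emph{full} symbol $T_\xi=\partial_u(\opH^\xi)^{-1/2}$; the paper first peels off a local (Euclidean CZ) part via Proposition~\ref{prp:local_boundedness}--Corollary~\ref{cor:full_reduction_nohardy} and applies the multiplier theorem only to the main part at infinity, where the kernel has the decay needed for the $\xi$-derivative estimates of Proposition~\ref{prp:fmf}. Without this reduction it is not clear that the Mihlin-type conditions can be verified. Second, for $j=1,\dots,n$ you invoke ``the Calder\'on--Zygmund theory developed by Hebisch--Steger'' to get a weak $(1,1)$ endpoint for $\Rz_j^*$; but that theory proceeds via an integral H\"ormander condition which implies $H^1(G)\to L^1(G)$ boundedness, and the paper proves (Section~\ref{s:hardy}, extending \cite{SV}) that every $\Rz_j^*$ \emph{fails} to be $H^1\to L^1$ bounded. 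The weak $(1,1)$ endpoint in Theorem~\ref{thm:wt11} therefore requires the substantially different Haar-basis argument of Section~\ref{s:haar}; in fact the paper's proof of Theorem~\ref{thm:main} treats all $j=0,\dots,n$ uniformly via the multiplier approach and reserves the weak-type argument for the separate endpoint result.
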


For $p \leq 2$, the above boundedness result was proved in \cite{HS}. (See also \cite{S} for previous partial results, and \cite{MV} for an extension to the case $G = N \rtimes \RR$, where $N$ is a stratified Lie group.) To this purpose, in \cite{HS} the authors developed a Calder\'on--Zygmund theory adapted to the nondoubling structure of $G$, and showed that the integral kernel of $\Rz_j$ satisfies a suitable ``integral H\"ormander condition'', which implies that $\Rz_j$ is of weak type $(1,1)$ and $L^p$-bounded for $p\in(1,2]$. The same ``integral H\"ormander condition'' also implies that the $\Rz_j$ are bounded from $H^1(G)$ to $L^1(G)$, where $H^1(G)$ is the atomic Hardy space on $G$ introduced in \cite{V}.

As it turns out, the adjoint Riesz transforms $\Rz_j^*$ do not satisfy the aforementioned ``integral H\"ormander condition''. More is true: the operators $\Rz_j^*$ are not bounded from $H^1(G)$ to $L^1(G)$. This fact, 
which was already discovered in \cite{SV} in the case $n=2$,
demonstrates that the approach used for $p \leq 2$ does not immediately extend to $p>2$ by duality considerations, and a different approach is needed.

Regarding the $L^p$-boundedness of the Riesz transforms $\Rz_j$ for $p>2$, to the best of our knowledge, the only result so far available in the literature has been the one contained in \cite{GS} for $n=1$ and $j=1$.
However, the method of \cite{GS} appears not to be suitable to treat the Riesz transform $\Rz_0 = X_0 \opL^{-1/2}$, and indeed the authors leave open the question of its $L^p$-boundedness. One of the main reasons of interest of Theorem \ref{thm:main} is that it also includes the case $j=0$, thus solving a problem that has been open for some time even in the smallest dimensional case $n=1$.

Our proof of Theorem \ref{thm:main} does not yield an endpoint bound for $p=\infty$. In particular, we do not know whether $\Rz_0^*$ is of weak type $(1,1)$. However, we can obtain such an endpoint for the operators $\Rz_j^*$ with $j>0$ by a different method, namely, an extension of the approach of \cite{GS} to the higher-dimensional cases.

\begin{thm}\label{thm:wt11}
For $j=1,\dots,n$, the adjoint Riesz transform $\Rz_j^*$ is of weak type $(1,1)$.
\end{thm}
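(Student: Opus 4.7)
The approach is to extend the strategy of Gaudry and Sj\"ogren \cite{GS}, which covers $n=1$, $j=1$, to arbitrary $n$ and $j \in \{1,\dots,n\}$. A first reduction comes from symmetry: the rotation group $O(n)$ acts on $G$ by isometric automorphisms $(x,u) \mapsto (Rx, u)$ preserving $\opL$ and rotating $(X_1, \dots, X_n)$ linearly, so the weak $(1,1)$ estimate for $\Rz_j^*$ with any $j \geq 1$ reduces to the single case $j=1$, and I henceforth focus on $\Rz_1^*$.

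The main analytic step is a partial Fourier transform in $x \in \RR^n$. Since both $\opL$ and $X_1 = e^u \partial_{x_1}$ commute with $x$-translations, the Laplacian fibers as $\opL_\xi = -\partial_u^2 + e^{2u}|\xi|^2$ on $L^2(\RR_u)$, while $X_1$ becomes multiplication by $i\xi_1 e^u$. After the translation $u \mapsto u - \log|\xi|$, the fiber operator $\opL_\xi$ becomes the fixed model operator $\opH = -\partial_s^2 + e^{2s}$ studied in \cite{GS}, and the product $i\xi_1 e^u$ becomes $i (\xi_1/|\xi|) e^s$. Thus the fiber of $\Rz_1^*$ is unitarily equivalent, up to multiplication by the bounded factor $-i\xi_1/|\xi|$, to $\opH^{-1/2}(e^s \cdot)$, which is precisely the operator whose weak $(1,1)$ behavior was analyzed in \cite{GS}. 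The full kernel of $\Rz_1^*$ on $G$ is then reconstructed by Fourier inversion in $\xi$, and one seeks sharp, $\xi$-uniform kernel bounds by lifting those available for the model operator.

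The weak $(1,1)$ bound on $G$ is derived via the Hebisch--Steger Calder\'on--Zygmund decomposition \cite{HS} $f = g + \sum_Q b_Q$ adapted to the non-doubling structure of $G$. The good part $g$ is controlled by $L^2$-boundedness of $\Rz_1^*$, while the bad atoms $b_Q$ require careful kernel analysis. Since, as noted in the introduction, $\Rz_1^*$ does not satisfy the standard integral H\"ormander condition of \cite{HS}, the argument cannot proceed by a naive kernel-difference bound. The key is instead to exploit the horizontal structure of $X_1$: writing $X_1 b_Q = e^{u'} \partial_{x_1'} b_Q$ and integrating by parts in $x_1'$ yields additional moment cancellation in the $x_1'$-direction which, combined with the Fourier factor $e^{i\xi \cdot (x-x')}$, improves the convergence of the relevant estimates off the admissible hull of $Q$.

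The main obstacle is controlling the large-scale asymptotic regime, where the fiber kernels of $\opH^{-1/2}(e^s \cdot)$ decay only mildly in the variable conjugate to the size of $Q$, and where standard H\"ormander-type arguments break down. Here the delicate cancellation mechanism of \cite{GS}, originally tailored to the 1D Schr\"odinger model, must be upgraded to work uniformly in $\xi \in \RR^n$; moreover, the interaction between the bounded multiplier $\xi_1/|\xi|$ and the Fourier inversion in the spherical direction $\omega = \xi/|\xi|$ must be carefully tracked, so that the extra $\partial_{x_1'}$-cancellation on bad atoms is preserved after integration in $\xi$. This uniform fiber-by-fiber analysis, married to the Hebisch--Steger decomposition on $G$, is the technical heart of the proof.
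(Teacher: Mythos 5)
Your proposal correctly identifies two useful facts: the rotational reduction to $j=1$ (which the paper also uses, tacitly, at the start of the proof of Theorem~\ref{thm:wt11}), and the fact that $\Rz_j^*$ fails the integral H\"ormander condition of \cite{HS}, so a naive kernel-difference estimate cannot close the argument. Beyond that, however, the route you sketch is at odds with what can work, and the key idea is missing.

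First, the fibering via partial Fourier transform in $x$ and reduction to the Schr\"odinger model $\opH=-\partial_s^2+e^{2s}$ is precisely the mechanism the paper uses in Section~\ref{s:multiplier} to prove the $L^p$ bounds, and the paper explicitly notes that this route does \emph{not} yield endpoint estimates: the $L^{1,\infty}$-quasinorm does not factor over the Fourier fibers, and weak-type bounds on the fiber operators do not recombine into a weak-type bound on $G$ by Fourier inversion. (You also misattribute to \cite{GS} a weak $(1,1)$ analysis of the Schr\"odinger operator $\opH^{-1/2}(e^s\cdot)$; what \cite{GS} proved is the weak $(1,1)$ bound for $\Rz_1^*$ on the affine group itself. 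The Schr\"odinger result in this paper is a \emph{consequence} of Theorem~\ref{thm:main} via transference, Section~\ref{s:repn}, not an ingredient.) Second, and more seriously, the proposed Calder\'on--Zygmund step does not go through. You suggest saving the argument by ``writing $X_1 b_Q=e^{u'}\partial_{x_1'}b_Q$ and integrating by parts in $x_1'$'' to manufacture extra moment cancellation on the bad atoms. But the bad atoms $b_Q$ produced by the Hebisch--Steger decomposition are merely mean-zero functions on admissible sets; they do not arrive as $x_1'$-derivatives of anything, and moving $\partial_{x_1'}$ around does not create the cancellation you need. Moreover, the obstruction is structural rather than technical: the kernel $k_{\Rz_j^*}$ fails to be bounded even from $H^1(G)$ to $L^1(G)$ (Section~\ref{s:hardy}), so there is no hope of verifying any H\"ormander-type kernel condition in the Hebisch--Steger framework.

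What actually makes the endpoint proof work is the Haar-basis machinery of \cite{GS}, which your proposal alludes to (``the delicate cancellation mechanism of \cite{GS}'') but never identifies or deploys. The paper first discretises the problem (Lemma~\ref{lem:discrete_red}), reducing to a weak $(1,1)$ bound on $\RR^n\times\ZZ$ for the operator $T_j$ of \eqref{eq:discrete_op}, a discrete fractional-integral-type sum $\sum_{h\geq k+1}\Delta_h/(h-k)$. The crucial input is Theorem~\ref{thm:haar_est} (\cite[Theorem 3]{GS}): a weak $(1,1)$ estimate for such sums when each $\Delta_h$ is a scale-$\beta 2^h$ Haar-like function, with constant controlled by $\sum_h\|\Delta_h\|_{L^1}$. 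To fit $T_j f$ into this framework one decomposes \emph{both} the input function (via the dyadic step-function approximations $\mu_h^u$ in $x_1$, giving zero-mean differences supported on dyadic lattices) \emph{and} the kernel $r_j$ (via the Haar-like decomposition of Lemma~\ref{lem:haar_dec}, with coefficients in $\ell^\delta$), and then reassembles using the $L^{1,\infty}$ addition lemmas (Lemma~\ref{lem:wtsum}). Without this Haar-basis structure and the key estimate of Theorem~\ref{thm:haar_est}, there is no proof; the CZ decomposition you invoke simply cannot be the vehicle.
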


The $L^p$-boundedness of Riesz transforms associated with elliptic and sub-elliptic operators on manifolds is a widely studied problem (see, e.g., \cite{Str,Ba,AL,LM,CD2,Si,ACDH,Li}), and it is impossible to include here a complete literature review on the subject. We refer to the introduction of \cite{MV} for a discussion of those works which are most closely related to our setting. The fact that the $L^p$-boundedness of Riesz transforms for $p>2$ is a more delicate property than that for $p \leq 2$ has already been observed in other contexts, see, e.g., \cite{CD1,ACDH}; an additional difficulty in our case is that here we are working in a nondoubling setting.

We remark that the Laplacian $\opL$ on $G$ considered here is not the Laplace--Beltrami operator for the aforementioned left-invariant Riemannian structure on $G$.
Indeed, the group $G$ with such Riemannian structure is nothing else than a realisation of the real hyperbolic space of dimension $n+1$, and the corresponding Laplace--Beltrami operator $\opL_H$ (which is self-adjoint with respect to the \emph{left} Haar measure)
is given by
\[
\opL_H = \opL + n X_0,
\]
i.e., $\opL$ and $\opL_H$ differ by a drift term. In these respects, the operator $\opL$ can be thought of as the natural Laplacian on the \emph{weighted} Riemannian manifold $G$, where the weight is the modular function (i.e., the density of the right Haar measure with respect to the left Haar measure). Most importantly, and differently from $\opL_H$, the Laplacian $\opL$ has no spectral gap, so there is no ``extra decay'' of the associated heat propagator for large time that can be exploited to deal with the part at infinity of the Riesz transforms associated with $\opL$. In addition, here we work with the ``unshifted'' Riesz transforms $\Rz_j = X_j \opL^{-1/2}$, as opposed to the ``shifted'' Riesz transforms $X_j (a+\opL)^{-1/2}$ for some $a>0$ considered elsewhere in the literature. Correspondingly, the convolution kernels of the $\Rz_j$ are not integrable at infinity, which, combined with the exponential volume growth of $G$, makes their analysis a nontrivial problem.

The group $G$ has a family of irreducible infinite-dimensional unitary representations on $L^2(\RR)$, in each of which the distinguished Laplacian $\opL$ on $G$ corresponds to the Schr\"odinger operator $\opH = -\partial_s^2 + e^{2s}$ on the real line. As $G$ is an amenable group, the $L^p$-boundedness of the Riesz transforms associated with $\opL$ transfers to an analogous result for the Schr\"odinger operator $\opH$, which may be of independent interest.

\begin{thm}\label{thm:schroedinger_riesz}
The first-order Riesz transforms
\[
\partial_s \opH^{-1/2}, \qquad V^{1/2} \opH^{-1/2}
\]
associated with the Schr\"odinger operator $\opH = -\partial_s^2 + V(s)$, where $V(s) = e^{2s}$, are bounded on $L^p(\RR)$ for all $p \in (1,\infty)$.
\end{thm}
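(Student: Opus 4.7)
The strategy is to transfer Theorem~\ref{thm:main} (in the case $n=1$) from the $ax+b$ group $G=\RR\rtimes\RR$ to the real line via an irreducible unitary representation of $G$. I would start by introducing $\pi:G\to\mathcal{U}(L^2(\RR))$ defined by
\[
\pi(x,u)f(s) \defeq e^{ie^s x}\,f(s+u);
\]
a direct computation shows that $\pi$ is a strongly continuous group homomorphism, and since $|\pi(x,u)f(s)|=|f(s+u)|$ pointwise, each $\pi(x,u)$ acts as an isometry on $L^p(\RR)$ for every $p\in[1,\infty]$. Differentiation at the identity then gives
\[
d\pi(X_0) = \partial_s, \qquad d\pi(X_1) = ie^s = iV^{1/2},
\]
and consequently $d\pi(\opL) = -\partial_s^2 + V = \opH$. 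By essential self-adjointness of both operators on $C_c^\infty$ and the functional calculus, $\pi$ intertwines $\varphi(\opL)$ with $\varphi(\opH)$ for every bounded Borel $\varphi$.

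Since $G$ is solvable and hence amenable, I would next invoke the Coifman--Weiss transference principle: for every $K\in L^1(G)$, the operator $\pi(K)\defeq\int_G K(g)\,\pi(g)\,dg$ on $L^p(\RR)$ is bounded in norm by the $L^p(G)$-norm of the corresponding convolution operator on $G$. As the convolution kernels of the Riesz transforms $\Rz_j=X_j\opL^{-1/2}$ are not integrable, I would pass to them via the truncations
\[
\Rz_j^\delta \defeq \frac{1}{\sqrt{\pi}}\int_\delta^{1/\delta} X_j e^{-t\opL}\,t^{-1/2}\,dt, \qquad 0<\delta<1,
\]
which have $L^1(G)$ kernels thanks to standard Gaussian-type estimates on the heat kernel of $\opL$ and its first-order derivatives on $G$. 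Writing $\Rz_j^\delta = \Rz_j\,\chi_\delta(\opL)$ with $0\le\chi_\delta\le 1$ and $\chi_\delta(\lambda)\to 1$ pointwise as $\delta\to 0$, Theorem~\ref{thm:main} combined with a standard $L^p$-functional-calculus bound for $\opL$ (e.g.\ Stein's multiplier theorem for subordinated contraction semigroups) yields uniform $L^p(G)$-bounds on the $\Rz_j^\delta$. Transference then makes $\pi(\Rz_j^\delta)=d\pi(X_j)\,\chi_\delta(\opH)\,\opH^{-1/2}$ uniformly bounded on $L^p(\RR)$; a weak operator limit on $L^2(\RR)$ combined with these uniform $L^p$-bounds identifies the limit as $d\pi(X_j)\opH^{-1/2}$, delivering the claim (the factor of $i$ appearing in $d\pi(X_1)$ is irrelevant for $L^p$-boundedness).

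The main technical point is the rigorous justification of the passage to the limit in this transference step; the essential structural inputs are the spectral intertwining $d\pi(\opL)=\opH$ and the isometric $L^p$-action of $\pi$, after which the argument reduces to standard semigroup and functional-calculus manipulations.
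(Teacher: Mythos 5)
Your proposal is essentially correct and follows the same overall strategy as the paper (the representation $\pi$ intertwining $\opL$ with $\opH$, Coifman--Weiss transference for the amenable group $G$, truncation of the subordination integral to get $L^1(G)$ kernels, and passage to the limit), but it diverges from the paper's proof at one step in a genuinely interesting way. Where you write $\Rz_j^\delta = \Rz_j\,\chi_\delta(\opL)$ and derive uniform $L^p(G)$ bounds for \emph{all} $p\in(1,\infty)$ from Theorem~\ref{thm:main} together with a multiplier bound on $\chi_\delta(\opL)$, the paper instead proves uniform $Cv^p(G)$ bounds on the truncated kernels $K_{j,n}$ only for $p\in(1,2]$ (by rerunning the Calder\'on--Zygmund estimates of \cite{MV} uniformly in $n$), and then handles $p>2$ by an extended argument: localising with a cutoff $\zeta\in C^\infty_c(G)$, exploiting that $\zeta\,k_{\Rz_j+\Rz_j^*}\in L^1(G)$, invoking the kernel decompositions from Propositions~\ref{prp:local_riesz_asymp} and~\ref{prp:kernel_reduction}, and estimating the operators $M_{\tilde K_0}(\xi)$, $M_{K_0}(\xi)$, $M_{K_1}(\xi)$ directly via the weighted $L^2$ bounds from Section~\ref{s:multiplier}. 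Your route, when it works, is considerably shorter, since it dispenses with all of this case analysis.

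The one thing you leave unverified is that $\chi_\delta(\opL)$ really is bounded on $L^p(G)$ uniformly in $\delta$ for all $p\in(1,\infty)$; you gesture at Stein's multiplier theorem but do not check the hypotheses. Both ingredients do hold, but you should say so: first, $e^{-t\opL}$ on $G$ (equipped with the right Haar measure) is a symmetric diffusion semigroup in Stein's sense (submarkovian, self-adjoint, conservative), so the universal multiplier theorem for Laplace-transform-type multipliers applies; second, $\chi_\delta$ is of Laplace transform type with symbol uniformly bounded by $1$, since
\[
\frac{\chi_\delta(\lambda)}{\lambda} = \int_0^\infty e^{-\lambda u}\,a_\delta(u)\,du,
\qquad
a_\delta(u)=\frac{1}{\pi}\int_{\delta}^{\min(u,1/\delta)}\frac{dt}{\sqrt{t(u-t)}}\leq \frac{1}{\pi}\int_0^u\frac{dt}{\sqrt{t(u-t)}}=1.
\]
Once these two points are spelled out, your argument is complete. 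One further small remark: the identification $\pi(\Rz_j^\delta)=d\pi(X_j)\chi_\delta(\opH)\opH^{-1/2}$ is what the paper formalises (for $n=1$) in Lemma~\ref{lem:gft_cv2} and Proposition~\ref{prp:irrep}\ref{en:irrep_integralop}; your appeal to ``essential self-adjointness and the functional calculus'' is the right idea, but you should either invoke this intertwining at the level of kernels (as the paper does via $\Sz'(G)$-convergence in Lemma~\ref{lem:partiallytempered}) or carry out the domain bookkeeping for the unbounded operator $d\pi(X_j)$ explicitly; the estimate $\|d\pi(X_j)\opH^{-1/2}\|_{L^2\to L^2}\leq1$ makes the limit passage painless.
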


The $L^p$-boundedness of Riesz transforms associated with Schr\"odinger operators has also been studied extensively in the literature, and indeed the $L^p$ bounds for $p \leq 2$ in Theorem \ref{thm:schroedinger_riesz} can be deduced from more general results for Schr\"odinger operators with nonnegative potentials (see, e.g., \cite{Si}). As before, however, the case $p>2$ is more delicate (see, e.g., \cite{AB,KW,Sh}) and we are not aware of results in the literature for $p>2$ encompassing the exponential potential $V(s) = e^{2s}$ considered here (for example, \cite{AB} requires a reverse H\"older condition on the potential which fails for our exponentially growing $V$, while the assumptions of the recent work \cite{KW} appear to rule out potentials with $\liminf_{|s| \to \infty} V(s) = 0$).

The study of the Riesz transforms associated with the Laplacian $\opL$ on $G = \RR^n \rtimes \RR$ has a discrete counterpart in the analysis of the Riesz transform for a distinguished ``flow Laplacian'' on homogeneous trees. Indeed, in the case $p \leq 2$, $L^p$ bounds for the latter were proved in the same paper \cite{HS} treating the continuous case as well (see also \cite{SaV} for a different proof). The recent work \cite{LMSTV}, tackling the problem of obtaining $L^p$ bounds for $p>2$ in the setting of homogeneous trees, can be thought of as a discrete counterpart to the present paper. As is often the case, while the overall proof strategies in the discrete and continuous settings present several similarities, there are a number of nontrivial issues that are specific to each setting.

\subsection*{Proof strategy}
The first step in our analysis is obtaining relatively explicit formulas for the convolution kernels of the Riesz transforms $\Rz_j$ on $G$, that is, the $X_j$-derivatives of the convolution kernel of the negative fractional power $\opL^{-1/2}$ of the Laplacian. In the case $n=1=j$, a similar approach was adopted in \cite{GS,S}; however, in those works, the formula for $\opL^{-1/2}$ was recovered via representation theory from a formula for the resolvent of the Schr\"odinger operator $\opH = -\partial_s^2+e^{2s}$ on the real line \cite{Hu,T}. Here instead we subordinate $\opL^{-1/2}$ to the heat semigroup $e^{-t\opL}$ and eventually reduce to known heat kernel formulas on real hyperbolic spaces. A crucial part of our analysis, presented in Section \ref{s:sqrtasymp} below, is deriving precise asymptotics (at the origin and at infinity) for the convolution kernel of $\opL^{-1/2}$ and its derivatives.

Once these asymptotics are available, in Section \ref{s:rieszasymp} we analyse the convolution kernels of the adjoint Riesz transforms $\Rz_j^*$, both at the origin and at infinity; in each case we are able to identify a ``main term'', with a simple explicit expression, and a ``remainder term'', which is integrable, thus the boundedness properties of the Riesz transforms are reduced to those of the respective main terms. Unsurprisingly, the behaviour of the main terms at the origin matches that of the corresponding kernels of the Euclidean Riesz transforms for the standard Laplacian on $\RR^{n+1}$, and their boundedness properties are readily established (see Section \ref{s:local}). As may be expected, the challenging part of the problem lies in the study of the kernels at infinity.

Up to this point, our analysis broadly follows the lines (with some variations and additional technical complications) of that in \cite{GS}, where the particular case $n=1=j$ is discussed. However, the main term at infinity of $\Rz_0^*$ does not appear to be amenable to the analysis of \cite{GS}, and here a substantially different approach is developed, which allows us to treat in a uniform way all the adjoint Riesz transforms $\Rz_j^*$ for $j=0,\dots,n$.

Namely, we observe that left-invariant operators on $G = \RR^n_x \rtimes \RR_u$ are invariant under Euclidean translations in the variable $x \in \RR^n$; thus, by writing $L^p(G) = L^p(\RR^n;L^p(\RR))$, we can think of such operators as operator-valued Fourier multiplier operators with respect to the Euclidean Fourier transform on $\RR^n$. The $L^p$-boundedness of the main terms at infinity of the adjoint Riesz transforms is then proved by invoking an operator-valued Fourier multiplier theorem \cite{HvNVW2,SW}. Specifically, the previously obtained formulas allow us to derive quite explicit expressions for the operator-valued symbols of the aforementioned Fourier multiplier operators, and the problem then reduces to verifying suitable R-boundedness properties on $L^p(\RR)$ for such symbols and their derivatives. Eventually, this verification reduces to proving the weighted $L^2$-boundedness for any weight in the Muckenhoupt class $A_2(\RR)$ of certain explicit integral operators on $\RR$. This programme is carried out in Section \ref{s:multiplier} and completes the proof of Theorem \ref{thm:main}.

The above approach via an operator-valued Fourier multiplier theorem, which may be compared to that used in \cite{JST} in relation to Grushin operators, has the drawback of not yielding endpoint boundedness results. In particular, the problem of whether the adjoint Riesz transform $\Rz_0^*$ is of weak type $(1,1)$ remains open. For the adjoint Riesz transforms $\Rz_j^*$ for $j=1,\dots,n$, however, a different approach, based on that in \cite{GS}, can be applied; we discuss this in Section \ref{s:haar} below, thus proving Theorem \ref{thm:wt11}. Apropos endpoint results, what we can establish for \emph{all} the adjoint Riesz transforms $\Rz_j^*$ for $j=0,\dots,n$ is that they are \emph{not} bounded from $H^1(G)$ to $L^1(G)$; this was already proved in \cite{SV} in the case $n=2$, and in Section \ref{s:hardy} we briefly discuss this negative result for any value of $n$.

The fact that left-invariant operators on $G$ are operator-valued Fourier multiplier operators on $\RR^n$ is related to the unitary representation theory of $G$. Indeed, up to equivalence, all the infinite-dimensional irreducible unitary representations of $G$ are obtained by induction from the nontrivial unitary characters of $\RR^n$. Correspondingly, as we illustrate in Section \ref{s:repn}, the operator-valued Fourier multiplier theorem invoked in Section \ref{s:multiplier} can be thought of as a Fourier multiplier theorem for the group Fourier transform on $G$. 
Still in Section \ref{s:repn} we discuss the relation between the Laplacian $\opL$ on $G$ and the Schr\"odinger operator $\opH$, as well as the respective Riesz transforms, and we show how Theorem \ref{thm:main} can be transferred to deduce Theorem \ref{thm:schroedinger_riesz}.

\subsection*{Some open questions}
It would be interesting to investigate whether the results and methods of the present paper could be extended to other settings.

Real hyperbolic spaces are particular cases of symmetric spaces of the noncompact type, as well as of Damek--Ricci spaces, and the distinguished Laplacian $\opL$ considered here has natural analogues in those more general settings (see, e.g., \cite{CGHM,V0}). In those contexts, tools from spherical Fourier analysis are available, which make it possible (see, e.g., \cite{ADY,AO} and references therein) to obtain explicit formulas and asymptotics for many relevant quantities, such as heat kernels and fundamental solutions. For example, in \cite{GS0} $L^p$-boundedness properties of second-order Riesz transforms associated with such distinguished Laplacians are established on arbitrary rank-one symmetric spaces of the noncompact type, and it is a natural question whether similar results could be obtained for first-order Riesz transforms as well.

Another natural setting to consider is that of \cite{MOV,MV}, where the factor $\RR^n$ in the semidirect product $\RR^n \rtimes \RR$ is replaced with an arbitrary stratified Lie group $N$, and correspondingly the Laplacian is replaced with a sub-Laplacian. In \cite{MV} the $L^p$-boundedness for $1 < p \leq 2$ of the natural first-order Riesz transforms on $N \rtimes \RR$ is established, but the problem for $p>2$ is left open. We point out that, differently from the aforementioned case of symmetric spaces, in the sub-Riemannian setting of \cite{MV} no explicit formulas for the heat kernel are available, while such formulas are a fundamental ingredient here. Additionally, here we crucially exploit an operator-valued Fourier multiplier theorem on $\RR^n$, which does not appear to have an obvious counterpart when $\RR^n$ is replaced by a noncommutative stratified group $N$. So a different approach and new ideas would likely be needed in that case.

\subsection*{Notation}

For any two nonnegative quantities $A$ and $B$, we write $A \lesssim B$ to indicate that there is a constant $C \in (0,\infty)$ such that $A \leq C B$. We also write $A \simeq B$ to denote the conjunction of $A \lesssim B$ and $B \lesssim A$. Subscripted variants such as $\lesssim_{a}$ and $\simeq_{a}$ are used to indicate that the implicit constants may depend on a parameter $a$. Moreover, $\RR^*$ stands for $\RR \setminus \{0\}$, and $\RR_+$ for $(0,\infty)$. Finally, $\chi_I$ denotes the characteristic function of a set $I$.

\section{Heat kernel formulas and asymptotics for fractional powers}\label{s:sqrtasymp}

We start by recalling a few useful facts and formulas about the analysis on the group $G = \RR^n \rtimes \RR$. For more details, the reader is referred to \cite{MOV,MV}, where the more general case of $G = N \rtimes \RR$ is discussed, with $N$ a stratified Lie group.

Recall that the group multiplication is given by \eqref{eq:operation}; correspondingly, the group inversion of $G$ is given by
\[
(x,u)^{-1} = (-e^{-u} x,-u),
\]
and the modular function $m$ on $G$ is given by
\begin{equation}\label{eq:modular}
m(x,u) = e^{-nu},
\end{equation}
that is, $e^{-nu} \,dx \,du$ is the left Haar measure on $G$.
Thus, the group convolution is given by
\begin{equation}\label{eq:convolution}
\begin{split}
f * g(x,u) &= \int_G f((x,u) \cdot (x',u')^{-1}) \, g(x',u') \,dx' \,du' \\
&= \int_G f(x-e^{u-u'} x',u-u') \, g(x',u') \,dx' \,du'
\end{split}
\end{equation}
and the $L^1$-isometric involution on $G$ is given by
\begin{equation}\label{eq:involution}
f^*(x,u) = m(x,u) \, \overline{f((x,u)^{-1})} = e^{-nu} \, \overline{f(-e^{-u} x,-u)};
\end{equation}
of course, these formulas make sense for suitably regular functions $f$ and $g$, but can be extended to cover the case where $f$ and $g$ are distributions on $G$.

By the Schwartz Kernel Theorem, any linear left-invariant operator $T$ mapping test functions to distributions on $G$ can be expressed in terms of convolution on the right by a kernel $k_T$, which in general is just a distribution on $G$:
\[
T f = f * k_T.
\]
The adjoint operator $T^*$ is also left-invariant, and $k_{T^*} = k_T^*$. For any $p \in [1,\infty]$, we denote by $Cv^p(G)$ the space of the $L^p$-convolutors of $G$, i.e., the convolution kernels of $L^p(G)$-bounded left-invariant operators.

We equip $G$ with the left-invariant Riemannian metric for which $X_0,\dots,X_n$ is an orthonormal basis. If $d$ is the associated left-invariant distance function on $G$, then
\begin{equation}\label{eq:distance}
\cosh d((x,u),(0,0)) = \cosh u + e^{-u} |x|^2/2.
\end{equation}
We say that a function $f$ on $G$ is radial if the value of $f$ at any point of $G$ only depends on the distance of that point from the origin $(0,0)$. With a slight abuse of notation, if $f$ is a radial function on $G$ and $R \geq 0$, we denote by $f(R)$ the value of $f$ at any point at distance $R$ from the origin.

We point out that $G$, thought of as a Riemannian manifold with the aforementioned structure, is nothing else than the real hyperbolic space of dimension $n+1$, and the Riemannian measure is the left Haar measure $e^{-nu} \,dx \,du$. Moreover, while the Laplacian $\opL$ is not the same as the Laplace--Beltrami $\opL_H$ on the hyperbolic space, the two operators are related. More precisely (see, e.g., \cite[Section IX.1]{VSC}), a power of the modular function $m$ (thought of as a multiplication operator) intertwines the distinguished Laplacian $\opL$ and the shifted Laplace--Beltrami operator:
\begin{equation}\label{eq:intertwine}
\opL f = m^{1/2} (\opL_H - n^2/4) (m^{-1/2} f).
\end{equation}

It is well known that real hyperbolic spaces are rank-one symmetric spaces, and that the group of isometries of a hyperbolic space fixing a point acts transitively on any sphere centred at that point. As the Laplace--Beltrami operator $\opL_H$ is invariant under isometries, the convolution kernels $k_{F(\opL_H)}$ of operators in the functional calculus for $\opL_H$ are radial. This is not the case for the operators $F(\opL)$ in the functional calculus for $\opL$; however, the intertwining relation \eqref{eq:intertwine} between $\opL$ and $\opL_H$ implies that
\begin{equation}\label{eq:intertwine_kernels}
k_{F(\opL)} = m^{1/2} k_{F(\opL_H-n^2/4)},
\end{equation}
thus $m^{-1/2} k_{F(\opL)}$ is radial.

The starting point for our analysis are the following formulas for the heat kernel $h_t = k_{e^{-t\opL}}$ associated with $\opL$ on $G$. 

\begin{prp}\label{prp:heatkernel}
For all $R > 0$, if $n$ is even,
\[
(m^{-1/2} h_t)(R) = \frac{1}{(2\pi)^{n/2}} \left( - \frac{1}{\sinh R} \frac{\partial}{\partial R} \right)^{n/2} h_t^\RR(R),
\]
while, if $n$ is odd,
\begin{multline*}
(m^{-1/2} h_t)(R) \\
= \frac{1}{\sqrt{\pi} (2\pi)^{n/2}} \int_R^\infty \frac{\sinh x}{(\cosh x - \cosh R)^{1/2}} \left( - \frac{1}{\sinh x} \frac{\partial}{\partial x} \right)^{(n+1)/2} h_t^\RR(x) \,dx,
\end{multline*}
where $h_t^{\RR}(x) = (4\pi t)^{-1/2} e^{-x^2/(4t)}$ is the heat kernel on $\RR$.
\end{prp}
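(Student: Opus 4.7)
The plan is to apply the intertwining relation \eqref{eq:intertwine_kernels} with $F(\lambda) = e^{-t\lambda}$, which gives
\[
m^{-1/2} h_t = k_{e^{-t(\opL_H - n^2/4)}} = e^{tn^2/4}\, p_t^{(n+1)},
\]
where $p_t^{(d)}$ denotes the heat kernel of $\opL_H$ on the real hyperbolic space of dimension $d$, viewed as a radial function of the distance to the basepoint. This reduces the problem to producing the two stated expressions for $p_t^{(n+1)}$ modulo the overall prefactor $e^{tn^2/4}$.

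The two classical ingredients I would invoke for the radial hyperbolic heat kernels are: the dimension-shift formula
\[
p_t^{(d+2)}(r) = -\frac{e^{-dt}}{2\pi \sinh r}\, \frac{\partial}{\partial r} p_t^{(d)}(r),
\]
which, together with the base case $p_t^{(1)}(r) = h_t^\RR(r)$ coming from $\mathbb{H}^1 = \RR$, expresses $p_t^{(D)}$ for any odd $D$ as an iterated radial derivative of $h_t^\RR$; and an Abel-type descent identity connecting adjacent dimensions, which up to constants reads
\[
p_t^{(d)}(r) = \sqrt{2}\, e^{(2d-1)t/4} \int_r^\infty \frac{\sinh s}{(\cosh s - \cosh r)^{1/2}}\, p_t^{(d+1)}(s)\,ds.
\]
Both identities can be verified directly by checking that the two sides satisfy the same radial heat equation with the same initial datum, or equivalently via the Plancherel formula for the spherical Fourier transform on rank-one symmetric spaces.

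For $n$ even I would iterate the shift formula $n/2$ times starting from $d = 1$; the accumulated exponential prefactor equals $\exp(-t(1+3+\cdots+(n-1))) = e^{-tn^2/4}$, which exactly cancels the $e^{tn^2/4}$ in front of $p_t^{(n+1)}$ and leaves the first claimed identity. For $n$ odd I would first iterate the shift formula $(n+1)/2$ times to reach the odd dimension $n+2$, and then apply the descent formula with $d = n+1$: the combined exponential prefactor works out to $\exp\bigl(((2n+1)/4 - (n+1)^2/4)t\bigr) = e^{-tn^2/4}$, again cancelling $e^{tn^2/4}$, while the multiplicative constants simplify via $\sqrt{2}/(2\pi)^{(n+1)/2} = 1/(\sqrt{\pi}(2\pi)^{n/2})$, yielding the second identity.

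The principal obstacle is bookkeeping: one must carefully track signs, powers of $2\pi$, and the cumulative exponential factors produced by the iterated shifts and the descent, verifying that everything collapses precisely so as to cancel $e^{tn^2/4}$. No further analytic input beyond the two hyperbolic identities above is needed.
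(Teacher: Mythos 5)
Your proposal follows the same strategy as the paper's proof: apply the intertwining relation \eqref{eq:intertwine_kernels} with $F(\lambda)=e^{-t\lambda}$ to reduce the statement to explicit formulas for the heat kernel $k_{e^{-t\opL_H}}$ on real hyperbolic space $\mathbb{H}^{n+1}$, and then invoke those formulas. The only difference is that the paper cites them directly from \cite{AO} and \cite{CGGM}, whereas you re-derive them from the base case $\mathbb{H}^1=\RR$ via the Millson dimension-shift recursion and an Abel-type descent identity; your bookkeeping of the exponential factors and the constants $\sqrt{2}/(2\pi)^{(n+1)/2}=1/(\sqrt{\pi}(2\pi)^{n/2})$ checks out.
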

\begin{proof}
In light of \eqref{eq:intertwine_kernels}, we know that
\[
m^{-1/2} h_t = e^{(n^2/4)t} k_{e^{-t \opL_H}},
\]
thus the above formulas reduce to those for the heat kernel $k_{e^{-t \opL_H}}$ on real hyperbolic spaces (see, e.g., \cite[eqs.\ (2.2) and (2.3)]{AO} or \cite[eqs.\ (8) and (9)]{CGGM}).
\end{proof}

From the above heat kernel formulas one can derive, via subordination, relatively explicit formulas for the convolution kernel of the fractional power $\opL^{-1/2}$. To this purpose, it is convenient to introduce the notation $Q^0_{\lambda-1/2}$ for the Legendre function of the second kind with parameters $0$ and $\lambda-1/2$, where $\lambda>0$. Note that, according to \cite[Section 3.7, eq.\ (5), p.\ 155]{EMOT1}, for all $z>1$,
\begin{equation}\label{eq:legQ_cpt}
Q^0_{\lambda-1/2}(z) = 2^{-\lambda-1/2} \int_{-1}^1 (z-s)^{-\lambda-1/2} (1-s^2)^{\lambda-1/2} \,ds,
\end{equation}
and also, by \cite[Section 3.7, eq.\ (4), p.\ 155]{EMOT1}, for all $r>0$,
\begin{equation}\label{eq:legQ_cosh}
Q^0_{\lambda-1/2}(\cosh r) = 2^{-1/2} \int_r^\infty e^{-\lambda x} (\cosh x - \cosh r)^{-1/2} \,dx.
\end{equation}

\begin{prp}\label{prp:sqrtexact}
The distribution $k_{\opL^{-1/2}}$ coincides with a smooth function away from the origin, and $m^{-1/2} k_{\opL^{-1/2}}$ is radial. Moreover, for all $R > 0$, if $n$ is even, then
\[
(m^{-1/2} k_{\opL^{-1/2}})(R) = \frac{1}{\pi (2\pi)^{n/2}} \left( - \frac{1}{\sinh R} \frac{\partial}{\partial R} \right)^{(n-2)/2} \frac{1}{R \sinh R},
\]
while, if $n$ is odd, then
\begin{equation}\label{eq:sqrtexact_odd}
\begin{split}
&(m^{-1/2} k_{\opL^{-1/2}})(R) \\
&= \frac{1}{\pi^{3/2} (2\pi)^{n/2}} \left( - \frac{1}{\sinh R} \frac{\partial}{\partial R} \right)^{(n-1)/2} \int_R^\infty (\cosh x-\cosh R)^{-1/2} \frac{dx}{x} \\
&= \frac{\sqrt{2}}{\pi^{3/2} (2\pi)^{n/2}} \left( - \frac{1}{\sinh R} \frac{\partial}{\partial R} \right)^{(n-1)/2} \int_0^\infty Q^0_{\lambda-1/2}(\cosh R) \,d\lambda .
\end{split}
\end{equation}
\end{prp}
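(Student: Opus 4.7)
The starting point is the subordination formula $\opL^{-1/2} = \pi^{-1/2} \int_0^\infty t^{-1/2} e^{-t \opL} \, dt$, which at the level of convolution kernels reads $k_{\opL^{-1/2}} = \pi^{-1/2} \int_0^\infty t^{-1/2} h_t \, dt$ as a distribution on $G$. The radiality of $m^{-1/2} k_{\opL^{-1/2}}$ is immediate from the intertwining relation \eqref{eq:intertwine_kernels} and the radiality of kernels in the functional calculus for $\opL_H$. The remaining assertions, namely smoothness away from the origin and the explicit pointwise formulas, I would obtain by inserting the radial expressions for $h_t$ supplied by Proposition \ref{prp:heatkernel}, exchanging the order of differentiation and integration carefully, and then evaluating the resulting $t$-integrals in closed form.

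For $n$ even, I would set $L_R = -(\sinh R)^{-1} \partial_R$ and split $L_R^{n/2} = L_R^{(n-2)/2} L_R$. A direct computation gives $L_R h_t^{\RR}(R) = \frac{R}{2t \sinh R} h_t^{\RR}(R)$, so after moving $L_R^{(n-2)/2}$ outside the $t$-integral (justified by absolute convergence of all integrals that appear in the resulting expressions for $R>0$) and performing the substitution $s = R^2/(4t)$, one obtains $\int_0^\infty t^{-1/2} L_R h_t^{\RR}(R) \, dt = 1/(\sqrt{\pi}\, R \sinh R)$, from which the claimed even-$n$ formula follows at once.

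For $n$ odd, the heat-kernel formula expresses $(m^{-1/2} h_t)(R)$ as an integral in $x \in (R,\infty)$ with kernel $K(R,x) = \sinh x/\sqrt{\cosh x - \cosh R}$ and integrand $L_x^{(n+1)/2} h_t^{\RR}(x)$. The key algebraic identity here is $L_R \int_R^\infty K(R,x) f(x) \, dx = \int_R^\infty K(R,x) L_x f(x) \, dx$, provable via the change of variable $s = \cosh x - \cosh R$ followed by differentiation under the integral sign. Iterating this identity lets me pull $(n-1)/2$ copies of $L_R$ out of the heat-kernel representation while leaving exactly one $L_x$ acting on $h_t^{\RR}$. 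Subordinating, interchanging the order of integration (justified by positivity), and computing the $t$-integral by the same substitution as in the even case then yields the first equality in \eqref{eq:sqrtexact_odd}. The second equality follows by Fubini applied to the representation \eqref{eq:legQ_cosh}, giving $\sqrt{2} \int_0^\infty Q^0_{\lambda-1/2}(\cosh R) \, d\lambda = \int_R^\infty (\cosh x - \cosh R)^{-1/2} \int_0^\infty e^{-\lambda x} \, d\lambda \, dx = \int_R^\infty (\cosh x - \cosh R)^{-1/2} \, dx/x$.

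Smoothness of $k_{\opL^{-1/2}}$ away from the origin can then be read off the final formulas: $R \mapsto 1/(R \sinh R)$ is smooth on $R > 0$, the odd-$n$ inner integral becomes a smooth function of $R$ after the substitution $s = \cosh x - \cosh R$, and $L_R$ preserves smoothness. The principal obstacle in carrying out the plan is the rigorous justification of the iterated $L_R$/$L_x$-swap in the odd-$n$ case and the subsequent Fubini in $t$ and $x$; this requires tracking the behaviour of $L_x^k h_t^{\RR}$ near $x=R$ and as $x \to \infty$ with enough uniformity in $t$, but the Gaussian decay of $h_t^{\RR}$ and its $x$-derivatives together with the integrability of $t^{-1/2}$ against a Gaussian in $t$ should suffice once $R>0$ is fixed.
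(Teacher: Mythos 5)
Your proposal is correct and follows essentially the same route as the paper: subordination to the heat kernel, explicit evaluation of the $t$-integral after extracting one application of $L = -(\sinh R)^{-1}\partial_R$, and in the odd case the commutation identity allowing $L_R$ and $L_x$ to be swapped through the Abel-type kernel $\sinh x/(\cosh x - \cosh R)^{1/2}$. The only slight variation is that you prove the swap identity via the substitution $s = \cosh x - \cosh R$ and differentiation under the integral sign, whereas the paper first integrates by parts to replace $(\cosh x - \cosh R)^{-1/2}$ by $(\cosh x - \cosh R)^{1/2}$ before differentiating under the integral; both routes are equally valid and serve the same purpose of regularising the integrand near $x=R$.
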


In the case $n=1$, an analogous formula can be found in \cite[p.\ 3304, last display]{S}; in the case $n=2$, see \cite[eq.\ (2.1)]{SV}.

\begin{proof}
Recall that, for all $\lambda > 0$,
\[
\lambda^{-1/2} = \frac{1}{\sqrt{\pi}} \int_0^\infty e^{-t\lambda} \frac{dt}{t^{1/2}},
\]
thus, at least formally,
\begin{equation}\label{eq:subordination_sqrt}
k_{\opL^{-1/2}} = \frac{1}{\sqrt{\pi}}  \int_0^\infty h_t \frac{dt}{t^{1/2}}.
\end{equation}
We now observe that, for all $R > 0$,
\[\begin{split}
\int_0^\infty \left(- \frac{1}{\sinh R} \frac{\partial}{\partial R}\right) h_t^\RR(R) \frac{dt}{t^{1/2}} 
&= \frac{1}{4\sqrt{\pi}} \frac{R}{\sinh R} \int_0^\infty e^{-R^2/(4t)} \frac{dt}{t^{2}} \\
&= \frac{1}{\sqrt{\pi}} \frac{1}{R \sinh R} .
\end{split}\]
Thus, for $n$ even, if we plug into \eqref{eq:subordination_sqrt} the formula for the heat kernel $h_t$ from Proposition \ref{prp:heatkernel}, then we obtain
\begin{equation}\label{eq:sub_heat_manipulation}
\begin{split}
(m^{-1/2} k_{\opL^{-1/2}})(R) 
&= \frac{1}{\sqrt{\pi}}  \int_0^\infty (m^{-1/2} h_t)(R) \frac{dt}{t^{1/2}} \\
&= \frac{1}{\sqrt{\pi}} \int_0^\infty \frac{1}{(2\pi)^{n/2}} \left( - \frac{1}{\sinh R} \frac{\partial}{\partial R} \right)^{n/2} h_t^\RR(R) \frac{dt}{t^{1/2}} \\
&= \frac{1}{\pi (2\pi)^{n/2}} \left( - \frac{1}{\sinh R} \frac{\partial}{\partial R} \right)^{(n-2)/2} \frac{1}{R \sinh R},
\end{split}
\end{equation}
as desired.

For $n$ odd, instead, 
we preliminarily observe that, if $g : \RR_+ \to \CC$ is any smooth function decaying sufficiently rapidly at infinity together with its derivatives, then
\[
\int_R^\infty \frac{\sinh x}{(\cosh x-\cosh R)^{1/2}} g(x) \,dx = -2 \int_R^\infty (\cosh x-\cosh R)^{1/2} \frac{\partial}{\partial x} g(x) \,dx
\]
for all $R > 0$, by integration by parts; therefore, differentiating under the integral sign yields the formula
\begin{multline}\label{eq:sinh_byparts}
\left(\frac{1}{\sinh R} \frac{\partial}{\partial R}\right)^N \int_R^\infty \frac{\sinh x}{(\cosh x-\cosh R)^{1/2}} g(x) \,dx \\
= \int_R^\infty \frac{\sinh x}{(\cosh x-\cosh R)^{1/2}} \left(\frac{1}{\sinh x} \frac{\partial}{\partial x}\right)^N g(x) \,dx
\end{multline}
for $N=1$, and by iteration the same holds for any $N \in \NN$.

Now, by arguing as in \eqref{eq:sub_heat_manipulation}, but using the formula for $n$ odd from Proposition \ref{prp:heatkernel}, one obtains
\begin{multline*}
(m^{-1/2} k_{\opL^{-1/2}})(R) \\
= \frac{1}{\pi^{3/2} (2\pi)^{n/2}} \int_R^\infty \frac{\sinh x}{(\cosh x - \cosh R)^{1/2}} \left( - \frac{1}{\sinh x} \frac{\partial}{\partial x} \right)^{(n-1)/2} \frac{1}{x\sinh x} \,dx,
\end{multline*}
and an application of \eqref{eq:sinh_byparts} turns this into the
first expression for $m^{-1/2} k_{\opL^{-1/2}}$ in \eqref{eq:sqrtexact_odd}.
To derive the second one, it is enough to observe that, by \eqref{eq:legQ_cosh},
\[
\sqrt{2} \int_0^\infty Q^0_{\lambda-1/2}(\cosh R) \,d\lambda =  \int_0^\infty (\cosh x - \cosh R)^{-1/2} \frac{dx}{x} ,
\]
which shows that the two expressions are indeed equal.
\end{proof}

From the above formulas we can derive the following asymptotics, which will be crucial for the subsequent analysis.

\begin{prp}\label{prp:sqrtasymp}
For all $R > 0$,
\begin{equation}\label{eq:sqrtasymp}
(m^{-1/2} k_{\opL^{-1/2}})(R) = \frac{1}{\pi (2\pi)^{n/2}} \Phi_n(\cosh R),
\end{equation}
where $\Phi_n : (1,\infty) \to \RR$ is real-analytic and, for all $k \in \NN$, the $k$th derivative of $\Phi_n$ satisfies
\begin{align*}
\Phi_n^{(k)}(X) &= \frac{(-1)^k \Gamma\left(k+\frac{n}{2}\right)}{X^{k+n/2} \log X} \left(1 + O\left(\frac{1}{\log X}\right)\right), \quad\text{as } X \to \infty, \\
\Phi_n^{(k)}(X) &= \frac{(-1)^k \Gamma\left(k+\frac{n}{2}\right)}{2(X-1)^{k+n/2}} \left(1 + O \left((X-1)^{\delta}\right)\right), \quad\text{as } X \to 1^+,
\end{align*}
where $\delta = 1/2$ if $n=1$ and $k=0$, and $\delta = 1$ otherwise.
\end{prp}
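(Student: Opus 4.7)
My plan is to read off $\Phi_n$ as a concrete function of $X = \cosh R$ from Proposition \ref{prp:sqrtexact} and then perform a case analysis by parity of $n$. The identity $\bigl(-\tfrac{1}{\sinh R}\tfrac{\partial}{\partial R}\bigr) g(\cosh R) = -g'(\cosh R)$ lets one rewrite the formulas of Proposition \ref{prp:sqrtexact} as
\[
\Phi_n(X) = \begin{cases} (-1)^{(n-2)/2}\, \psi^{((n-2)/2)}(X), & n \text{ even},\\[3pt]
\displaystyle\frac{(-1)^{(n-1)/2}}{\sqrt{\pi}}\, J^{((n-1)/2)}(X), & n \text{ odd},\end{cases}
\]
where $\psi(X) := \bigl(\arccosh(X)\sqrt{X^2-1}\bigr)^{-1}$ and, after the substitution $Y = X+u^2$,
\[
J(X) := \int_X^\infty \frac{\psi(Y)}{\sqrt{Y-X}}\,dY = 2\int_0^\infty \psi(X+u^2)\,du.
\]
Real-analyticity of $\psi$ on $(1,\infty)$ is immediate; real-analyticity of $J$ follows by standard differentiation under the integral sign, using the bound $\psi(Y) = O(1/(Y\log Y))$ at infinity to justify convergence and extension to a complex neighborhood of $(1,\infty)$.

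For the local asymptotics I would first observe that $\Psi(Y) := (Y-1)\psi(Y) = \sqrt{Y-1}/(\sqrt{Y+1}\,\arccosh Y)$ extends to a real-analytic function on a neighborhood of $[1,\infty)$ with $\Psi(1) = 1/2$: this reduces to $\arccosh(1+t) = \sqrt{2t}\,(1+O(t))$, verified via $Y = 1 + w^2/2$ in $\arccosh'(Y) = 1/\sqrt{Y^2-1}$. Then $\psi(Y) = \Psi(1)/(Y-1) + $ (real-analytic at $Y=1$), so for each $j \geq 0$
\[
\psi^{(j)}(Y) = \frac{(-1)^j j!\,\Psi(1)}{(Y-1)^{j+1}} + r_j(Y),
\]
with $r_j$ bounded near $Y=1$ and of the same decay as $\psi^{(j)}$ at infinity. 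For $n$ even, differentiating $k$ times and tracking constants gives the claim with absolute error $O(1)$ and therefore relative error $O((X-1)^{k+n/2}) \subset O((X-1))$, i.e.\ $\delta = 1$. For $n$ odd, plugging $\psi^{(j)}(X+u^2)$ with $j = (n-1)/2 + k$ into $J^{(j)}(X) = 2\int_0^\infty \psi^{(j)}(X+u^2)\,du$ and evaluating the main part via the beta-integral
\[
\int_0^\infty (\epsilon+u^2)^{-\alpha}\,du = \frac{\sqrt{\pi}\,\Gamma(\alpha-1/2)}{2\Gamma(\alpha)}\,\epsilon^{1/2-\alpha}, \qquad \alpha = j+1,
\]
yields the leading $(X-1)^{-n/2-k}$ term; the remainder $\int_0^\infty r_j(X+u^2)\,du$ is bounded as $X \to 1^+$, giving relative error $O((X-1)^{n/2+k})$, which equals $O((X-1)^{1/2})$ precisely when $n=1$ and $k=0$ and is at least $O((X-1))$ otherwise. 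Matching constants via $\Psi(1) = 1/2$, $j+1/2 = n/2+k$, and the parity identity $(-1)^{(n-1)/2+j} = (-1)^k$ reproduces the claimed formula.

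For the global asymptotics, the elementary estimates $\arccosh X = \log X + \log 2 + O(X^{-2})$ and $\sqrt{X^2-1} = X(1 + O(X^{-2}))$ give $\psi(X) = (X\log X)^{-1}(1 + O(1/\log X))$. Factoring $\psi = X^{-1}\cdot g$ with $g(X) = (\log X)^{-1}(1 + O(1/\log X))$ and applying Leibniz, each derivative of the slowly varying $g$ gains an extra factor of order $1/(X\log X)$ (e.g.\ $g'(X) = O(1/(X(\log X)^2))$), so the leading contribution to $\psi^{(k)}$ comes from differentiating $X^{-1}$, yielding $\psi^{(k)}(X) = (-1)^k k!/(X^{k+1}\log X)(1 + O(1/\log X))$; this gives the even case. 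For $n$ odd I again use $J^{(j)}(X) = 2\int_0^\infty \psi^{(j)}(X+u^2)\,du$ and substitute $u = \sqrt{X}\,v$, which reduces the main part of the integrand to
\[
\frac{2(-1)^j j!}{X^{j+1/2}}\int_0^\infty \frac{dv}{(1+v^2)^{j+1}\bigl(\log X + \log(1+v^2)\bigr)};
\]
dominated convergence (with majorant $((1+v^2)^{j+1}\log 2)^{-1}$ for $X \geq 2$) evaluates this integral to $\sqrt{\pi}\,\Gamma(j+1/2)/(2\Gamma(j+1))\cdot(\log X)^{-1}(1 + O(1/\log X))$, and the $r_j$-piece is of lower order. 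The constant-matching is the same as above.

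The main obstacle I expect is bookkeeping rather than a single hard analytic step: quantifying that derivatives of the slowly varying factors $1/\log X$ and $\Psi(Y)$ remain of lower order than the leading singular behavior, carefully splitting the $u$-integrals between a neighborhood of $0$ and the tail uniformly in $X$, and matching all multiplicative constants so that the gamma factors appear correctly. The appearance of $\delta = 1/2$ rather than $\delta = 1$ in the exceptional case $n=1$, $k=0$ traces back directly to the fact that the half-order fractional integral $J$ converts the simple pole of $\psi$ at $Y=1$ into a $(X-1)^{-1/2}$ singularity plus a bounded remainder of relative size $(X-1)^{1/2}$.
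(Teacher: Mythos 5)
Your proposal is correct and, while it shares the overall architecture with the paper's argument (read $\Phi_n$ off Proposition~\ref{prp:sqrtexact} via the change of variables $X = \cosh R$, split by parity, then do local and global asymptotics), it takes a genuinely different route in the odd case. The paper establishes the global asymptotics of $\Psi_1^{(k)}$ through the Legendre-function integral representation $Q^0_{\lambda-1/2}(X) = 2^{-\lambda-1/2}\int_{-1}^1 (X-s)^{-\lambda-1/2}(1-s^2)^{\lambda-1/2}\,ds$, integrating in $\lambda$ and carefully expanding the resulting $s$-integral. You instead rewrite the odd-case expression as the half-order Abel-type integral $J(X) = 2\int_0^\infty \psi(X+u^2)\,du$ (which is precisely $\sqrt{\pi}\,\Psi_1(X)$ after the substitution $Y = \cosh x$ in the paper's $\int (\cosh x - X)^{-1/2}\,dx/x$), so that the odd-case asymptotics --- both local and global --- are derived by feeding the already-established asymptotics of $\psi^{(j)}$ into a single one-dimensional integral, evaluated by a beta-integral near $X=1$ and by the substitution $u=\sqrt{X}\,v$ at infinity. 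This unifies the two parities and avoids the Legendre-function apparatus; it is a modest but real simplification. For the local asymptotics the two arguments are essentially the same modulo the $R \leftrightarrow X$ change of variables, and your decomposition $\psi(Y) = \tfrac{1}{2}(Y-1)^{-1} + \tilde\Psi(Y)$ with $\tilde\Psi$ analytic at $Y=1$ matches the paper's treatment of $(\tfrac{1}{\sinh R}\partial_R)^k \tfrac{1}{R\sinh R}$. Your global even-case argument via factoring $\psi = X^{-1}g$ and Leibniz is in the right spirit but is a sketch; the paper's device of writing $\Psi_0(X) = \tfrac{1}{X\log X}G(1/X, 1/\log X)$ with $G$ bivariate analytic and differentiating the double power series term by term is the rigorous version of what you gesture at, and you would need something of that precision to control all the error terms uniformly in $k$. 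One small slip: your $r_j = \tilde\Psi^{(j)}$ does not have ``the same decay as $\psi^{(j)}$ at infinity'' --- it decays like $Y^{-j-1}$ without the extra $1/\log Y$ gain --- but this does not affect the argument, since $Y^{-j-1}$ is still enough to make $\int_0^\infty r_j(X+u^2)\,du$ uniformly bounded near $X=1$.
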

\begin{proof}
Let $N = \lfloor (n-1)/2 \rfloor$. Notice that the Chain Rule applied to the change of variables $X = \cosh R$ yields
\begin{equation}\label{eq:change_var_cosh}
\frac{\partial}{\partial X} = \frac{1}{\sinh R} \frac{\partial}{\partial R}.
\end{equation}
Moreover,
\[
\sinh R = \sqrt{X^2 - 1}, \qquad R = \arccosh X = \log( X + \sqrt{X^2-1}).
\]
Thus, from Proposition \ref{prp:sqrtexact} we deduce that \eqref{eq:sqrtasymp} holds with
\[
\Phi_n(X) = (-1)^{N} \Psi^{(N)}_{j}(X),
\]
where $j$ is equal to $0$ or $1$ according to whether $n$ is even or odd, and
\begin{align}
\Psi_0(X) &= \frac{1}{\sqrt{X^2-1} \log(X + \sqrt{X^2-1})}, \label{eq:Psi0}\\
\Psi_1(X) &= \sqrt{\frac{2}{\pi}} \int_0^\infty Q^0_{\lambda-1/2}(X) \,d\lambda \notag\\
&= \pi^{-1/2} \int_{\arccosh X}^\infty (\cosh x - X)^{-1/2} \frac{dx}{x}. \label{eq:Psi1}
\end{align}
Consequently, the proof of the above asymptotics reduces to showing that, for all $k \in \NN$ and $j=0,1$,
\begin{equation}\label{eq:asymp_infty}
\Psi_j^{(k)}(X) = \frac{(-1)^k \Gamma\left(k+1-\frac{j}{2}\right)}{X^{k+1-j/2} \log X} \left(1 + O\left(\frac{1}{\log X}\right)\right), \quad\text{as } X \to \infty,
\end{equation}
and
\begin{equation}\label{eq:asymp_local}
\Psi_j^{(k)}(X) = \frac{(-1)^k \Gamma\left(k+1-\frac{j}{2}\right)}{2 (X-1)^{k+1-j/2}}  \left(1 + O \left((X-1)^{\delta} \right)\right), \quad\text{as } X \to 1^+,
\end{equation}
where $\delta = 1/2$ if $j=1$ and $k=0$, and $\delta = 1$ otherwise.

We first prove \eqref{eq:asymp_infty} in the case $j=0$. Notice that, by \eqref{eq:Psi0}, we can write
\[
\Psi_0(X) = \frac{1}{X \log X} G(1/X,1/\log X),
\]
where
\[
G(a,b) = \frac{(1-a^2)^{-1/2}}{1+b \log(1+\sqrt{1-a^2})}
\]
is a bivariate analytic function in a neighbourhood of $(0,0)$. Thus, from the power series expansion of $G$, we deduce that, for suitable coefficients $c_{\ell,m} \in \RR$,
\[
\Psi_0(X) = \frac{1}{X \log X} \sum_{\ell,m \geq 0} c_{\ell,m} \frac{1}{X^\ell} \frac{1}{\log^m X}
\]
whenever $X$ is sufficiently large. Notice now that
\[
\frac{\partial}{\partial X} \frac{1}{X} = -\frac{1}{X^2}, \qquad \frac{\partial}{\partial X} \frac{1}{\log X} = -\frac{1}{X \log^2 X}.
\]
Thus, by differentiating term by term the above series, we deduce that, for any $k \in \NN$,
\[
\Psi_0^{(k)}(X) = \frac{1}{X^{1+k} \log X} \sum_{\ell,m \geq 0} c^k_{\ell,m} \frac{1}{X^\ell} \frac{1}{\log^m X}
\]
with $c_{0,0}^{k+1} = -(k+1) c^k_{0,0}$. As $c^0_{0,0} = c_{0,0} = G(0,0) = 1$, we conclude that $c_{0,0}^k = (-1)^k k! = (-1)^k \Gamma(k+1)$, thus proving \eqref{eq:asymp_infty} in the case $j=0$.

We now prove \eqref{eq:asymp_local} in the case $j=0$. For this, we observe that
\[
\frac{1}{\sinh R} = \frac{1 + O(R^2)}{R}, \qquad\text{as } R\to 0,
\]
where the term $O(R^2)$ stands for an even analytic function vanishing at $R=0$. Thus, for all $N \in \NN \setminus \{0\}$,
\[\begin{split}
\frac{1}{\sinh R} \frac{\partial}{\partial R} \frac{1+O(R^2)}{R^N}  
&= \frac{1 + O(R^2)}{R} \left[ -N \frac{1+O(R^2)}{R^{N+1}} + \frac{O(R)}{R^N}  \right] \\
&= - N \frac{1+O(R^2)}{R^{N+2}}
\end{split}\]
and therefore, inductively,
\begin{equation}\label{eq:der_sinh}
\left(\frac{1}{\sinh R} \frac{\partial}{\partial R}\right)^k \frac{1}{R \sinh R} = (-1)^k 2^k k! \frac{1+O(R^2)}{R^{2+2k}}, \qquad\text{as } R\to 0.
\end{equation}
As $\Psi_0(\cosh R) = (R \sinh R)^{-1}$ by \eqref{eq:Psi0}, in light of \eqref{eq:change_var_cosh} the previous identity can be rewritten as
\[
\Psi_0^{(k)}(\cosh R) = \frac{(-1)^k 2^k k!}{R^{2+2k}} (1+O(R^2)), \qquad\text{as } R \to 0.
\]
On the other hand, if $X = \cosh R$, we deduce that
\begin{equation}\label{eq:X_R}
X-1 = \frac{R^2}{2}(1+O(R^2))
\end{equation}
and 
\begin{equation}\label{eq:R_X}
R^2 = 2 (X-1)(1+O(X-1)), \qquad\text{as } X \to 1^+,
\end{equation}
whence
\[
\Psi_0^{(k)}(X) = \frac{(-1)^k k!}{2 (X-1)^{1+k}} (1+O(X-1)), \qquad\text{as } X \to 1^+,
\]
which proves \eqref{eq:asymp_local} in the case $j=0$.

Let us now prove \eqref{eq:asymp_infty} in the case $j=1$. According to \eqref{eq:legQ_cpt} and \eqref{eq:Psi1}, we can write
\[
\Psi_1(X) = \pi^{-1/2} \int_0^\infty 2^{-\lambda} \int_{-1}^1 (X-s)^{-\lambda-1/2} (1-s^2)^{\lambda-1/2} \,ds \,d\lambda.
\]
Now, we observe that, for all $k \in \NN$,
\[\begin{split}
\left(\frac{\partial}{\partial X}\right)^k (X-s)^{-\lambda-1/2} &= (-1)^k (X-s)^{-\lambda-1/2-k} \prod_{\ell=0}^{k-1} (\lambda+1/2+\ell) \\
&= (-1)^k (X-s)^{-\lambda-1/2-k} \sum_{\ell=0}^k c^k_\ell \lambda^\ell
\end{split}\]
for suitable rational coefficients $c_\ell^k$, where $c_0^k = 2^{-k} (2k-1)!! = \pi^{-1/2} \Gamma(k+1/2)$. Consequently
\begin{equation}\label{eq:Psi_newexpr}
\begin{split}
&\Psi_1^{(k)}(X)\\
 &= \frac{(-1)^k}{\sqrt{\pi}} \sum_{\ell=0}^k c_\ell^k \int_{-1}^1 (1-s^2)^{-1/2} (X-s)^{-1/2-k} \int_0^\infty \lambda^\ell \left(\frac{2(X-s)}{1-s^2}\right)^{-\lambda} \,d\lambda  \,ds \\
&= \frac{(-1)^k}{\sqrt{\pi}} \sum_{\ell=0}^k \ell! \, c_\ell^k \int_{-1}^1  \frac{(1-s^2)^{-1/2} (X-s)^{-1/2-k}}{\log^{\ell+1} \left(\frac{2(X-s)}{1-s^2}\right)}  \,ds ,
\end{split}
\end{equation}
where we used that $\int_0^\infty \lambda^\ell e^{-\lambda} \,d\lambda = \ell!$ for all $\ell \in \NN$.

Now, under the assumption $|s|<1$, for $X \to \infty$ we have
\[
(X-s)^{-k-1/2} = X^{-k-1/2} (1+O(1/X)),
\]
while
\[
\log \left(\frac{2(X-s)}{1-s^2}\right) = \log X + \log(2/(1-s^2)) + O(1/X)
\]
(notice that $2/(1-s^2) > 2$ and $\log(2/(1-s^2)) > \log 2 > 0$ here),
thus
\[\begin{split}
\frac{1}{\log \left(\frac{2(X-s)}{1-s^2}\right)} &= \frac{1}{\log X} - \frac{\log(2/(1-s^2)) + O(1/X)}{(\log X) \left( \log X + \log(2/(1-s^2)) + O(1/X) \right)} \\
&= \frac{1}{\log X} \left( 1+ \log(2/(1-s^2)) \, O\left(\frac{1}{\log X} \right) \right)
\end{split}\]
and
\[
\frac{(X-s)^{-1/2-k}}{\log^{\ell+1} \left(\frac{2(X-s)}{1-s^2}\right)} 
= \frac{X^{-1/2-k}}{\log^{\ell+1} X} \left( 1+ \log^{\ell+1}(2/(1-s^2)) \, O\left(\frac{1}{\log X} \right) \right).
\]
So
\[\begin{split}
&\int_{-1}^1 \frac{(1-s^2)^{-1/2} (X-s)^{-1/2-k}}{\log^{\ell+1} \left(\frac{2(X-s)}{1-s^2}\right)} \,ds \\
&= \frac{X^{-1/2-k}}{\log^{\ell+1} X} 
\Biggl( \int_{-1}^1 (1-s^2)^{-1/2} \,ds  + O\left(\frac{1}{\log X} \right) \int_{-1}^1 \frac{\log^{\ell+1}(2/(1-s^2))}{(1-s^2)^{1/2}} \,ds  \Biggr) \\
&= \pi \frac{X^{-1/2-k}}{\log^{\ell+1} X} \left( 1 + O\left(\frac{1}{\log X} \right) \right)
\end{split}\]
and therefore, by \eqref{eq:Psi_newexpr},
\[
\Psi_1^{(k)}(X) = (-1)^k \sqrt{\pi} \sum_{\ell=0}^k \ell! \, c_\ell^k \frac{X^{-1/2-k}}{\log^{\ell+1} X} \left( 1 + O\left(\frac{1}{\log X} \right) \right),
\]
which implies the desired asymptotics \eqref{eq:asymp_infty} for $j=1$, as $c_0^k = \pi^{-1/2} \Gamma(k+1/2)$.

We are left with proving \eqref{eq:asymp_local} in the case $j=1$. We start by observing that, by \eqref{eq:Psi1},
\[
\Psi_1(\cosh R) = \pi^{-1/2} \int_R^\infty (\cosh x - \cosh R)^{-1/2} \frac{dx}{x}.
\]
Thus, by \eqref{eq:change_var_cosh} and \eqref{eq:sinh_byparts}, we deduce that
\[\begin{split}
\pi^{1/2} \Psi_1^{(k)}(\cosh R) 
&=  \left(\frac{1}{\sinh R} \frac{\partial}{\partial R} \right)^k \int_R^\infty (\cosh x - \cosh R)^{-1/2} \frac{dx}{x} \\
&= \int_R^\infty \frac{\sinh x}{(\cosh x - \cosh R)^{1/2}} \left(\frac{1}{\sinh x} \frac{\partial}{\partial x}\right)^k \frac{1}{x \sinh x} \,dx.
\end{split}\]

Assume that $R < 1$, and split the above integral as $\int_{R}^\infty = \int_{R}^1 + \int_1^\infty$. Then
\begin{multline*}
\left| \int_1^\infty \frac{\sinh x}{(\cosh x - \cosh R)^{1/2}} \left(\frac{1}{\sinh x} \frac{\partial}{\partial x}\right)^k \frac{1}{x \sinh x} \,dx \right| \\
\leq \int_1^\infty \frac{\sinh x}{(\cosh x - \cosh 1)^{1/2}} \left| \left(\frac{1}{\sinh x} \frac{\partial}{\partial x}\right)^k \frac{1}{x \sinh x} \right| \,dx = O(1).
\end{multline*}
Moreover, for $0 < R < x < 1$,
\[
\cosh x - \cosh R = 2 \sinh((x+R)/2) \sinh((x-R)/2) = \frac{x^2-R^2}{2} (1 + O(x^2))
\]
and, by \eqref{eq:der_sinh},
\[
\left(\frac{1}{\sinh x} \frac{\partial}{\partial x}\right)^k \frac{1}{x \sinh x} = (-1)^k 2^k k! \frac{1+O(x^2)}{x^{2+2k}},
\]
whence
\begin{multline*}
\int_R^1 \frac{\sinh x}{(\cosh x - \cosh R)^{1/2}} \left(\frac{1}{\sinh x} \frac{\partial}{\partial x}\right)^k \frac{1}{x \sinh x} \,dx \\
= (-1)^k 2^{k+1/2} k! \int_R^1 \frac{x^{-1-2k}}{(x^2-R^2)^{1/2}} (1+O(x^2)) \,dx.
\end{multline*}
We notice now that
\[\begin{split}
\int_R^1 \frac{x^{-1-2k}}{(x^2-R^2)^{1/2}} \,dx 
&= R^{-1-2k} \int_1^{1/R} u^{-1-2k} (u^2-1)^{-1/2} \,du \\
&= R^{-1-2k} \left( \int_1^{\infty} - \int_{1/R}^\infty \right) \\
&= C_k R^{-1-2k} (1+ O(R^{1+2k})),
\end{split}\]
where
\[
C_k = \int_1^{\infty} u^{-1-2k} (u^2-1)^{-1/2} \,du =  \frac{1}{2} \int_0^1 s^{k-1/2} (1-s)^{-1/2} \,ds = \frac{\sqrt{\pi}}{2} \frac{\Gamma(k+1/2)}{k!}.
\]
Moreover,
\[\begin{split}
\int_R^1 \frac{x^{-1-2k}}{(x^2-R^2)^{1/2}} x^2 \,dx 
&= R^{1-2k} \int_1^{1/R} u^{1-2k} (u^2-1)^{-1/2} \,du \\
&= R^{-1-2k} O(R^{2-\epsilon}),
\end{split}\]
where $\epsilon = 1$ if $k=0$, and $\epsilon=0$ otherwise. Putting all together finally yields
\[\begin{split}
&\sqrt{\pi} \Psi_1^{(k)}(\cosh R)  \\
&= O(1) +  (-1)^k 2^{k+1/2} k! C_k R^{-1-2k} (1+ O(R^{1+2k}) + O(R^{2-\epsilon} )) \\
&= (-1)^k \sqrt{\pi} 2^{k-1/2} \Gamma(k+1/2) R^{-1-2k} (1+O(R^{2\delta}))
\end{split}\]
for $R < 1$, where $\delta = 1/2$ if $k=0$ and $\delta = 1$ otherwise. In light of \eqref{eq:R_X}, we eventually deduce
\[
\Psi_1^{(k)}(X) = (-1)^k 2^{-1} \Gamma(k+1/2) (X-1)^{-1/2-k} (1+O((X-1)^{\delta}))
\]
as $X \to 1^+$, which proves \eqref{eq:asymp_local} in the case $j=1$.
\end{proof}

\section{Kernel asymptotics for the Riesz transforms}\label{s:rieszasymp}

Simple manipulations of the expression for $k_{\opL^{-1/2}}$ in Proposition \ref{prp:sqrtasymp} allow us to derive the following formulas for the convolution kernels of the Riesz transforms.

\begin{prp}\label{prp:kernel_formulas}
For all $(x,u) \in G \setminus \{(0,0)\}$,
\begin{align*}
k_{\Rz_0-\Rz_0^*}(x,u) &= 2 \sinh u  \frac{m^{1/2}(x,u)}{\pi (2\pi)^{n/2}} \Phi_n'(\cosh R), \\
k_{\Rz_0+\Rz_0^*}(x,u) &= -\frac{m^{1/2}(x,u)}{\pi (2\pi)^{n/2}} \left( n \Phi_n(\cosh R) + e^{-u} |x|^2 \Phi_n'(\cosh R) \right),
\end{align*}
and, for $j=1,\dots,n$,
\begin{align*}
k_{\Rz_j}(x,u) &=  x_j \frac{m^{1/2}(x,u)}{\pi (2\pi)^{n/2}} \Phi_n'(\cosh R),\\
k_{\Rz_j^*}(x,u) &=  - e^{-u} x_j \frac{m^{1/2}(x,u)}{\pi (2\pi)^{n/2}} \Phi_n'(\cosh R), 
\end{align*}
where $R = d((x,u),(0,0))$, and $\Phi_n$ is as in Proposition \ref{prp:sqrtasymp}.
\end{prp}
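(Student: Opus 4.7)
The plan is to apply the left-invariant vector fields $X_j$ directly to the explicit expression for $k_{\opL^{-1/2}}$ from Proposition \ref{prp:sqrtasymp}, and then to invoke the involution formula \eqref{eq:involution} to pass to the adjoint kernels. Because $X_j$ is left-invariant, one has $X_j(f * k) = f * X_j k$; consequently $k_{\Rz_j} = k_{X_j \opL^{-1/2}} = X_j k_{\opL^{-1/2}}$, so the computation reduces to a chain-rule calculation.

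Writing $k_{\opL^{-1/2}}(x,u) = (\pi(2\pi)^{n/2})^{-1} m^{1/2}(x,u)\,\Phi_n(\cosh R)$, and using $m(x,u) = e^{-nu}$ from \eqref{eq:modular} together with $\cosh R = \cosh u + \tfrac{1}{2} e^{-u}|x|^2$ from \eqref{eq:distance}, the chain rule produces
\[
\partial_u m^{1/2} = -\tfrac{n}{2} m^{1/2}, \quad \partial_u \cosh R = \sinh u - \tfrac{1}{2} e^{-u}|x|^2, \quad \partial_{x_j}\cosh R = e^{-u} x_j.
\]
For $j=1,\dots,n$ the field $X_j = e^u \partial_{x_j}$ annihilates $m^{1/2}$, and the factor $e^u$ cancels the $e^{-u}$ from $\partial_{x_j}\cosh R$, delivering immediately the stated formula for $k_{\Rz_j}$. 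For $j = 0$ the field $X_0 = \partial_u$ differentiates both factors and gives
\[
k_{\Rz_0}(x,u) = \frac{m^{1/2}(x,u)}{\pi (2\pi)^{n/2}} \Bigl( -\tfrac{n}{2} \Phi_n(\cosh R) + \bigl(\sinh u - \tfrac{1}{2} e^{-u}|x|^2\bigr) \Phi_n'(\cosh R) \Bigr).
\]

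To pass to the adjoints I use \eqref{eq:involution}, which reads $k_{\Rz_j^*}(x,u) = e^{-nu} k_{\Rz_j}(-e^{-u}x,-u)$ (kernels are real). Two facts make this clean: first, since $R$ is the left-invariant distance, $R$ is invariant under inversion and $\cosh R$ is unchanged by the substitution $(x,u) \mapsto (-e^{-u}x,-u)$; second, $m^{1/2}(-e^{-u}x,-u) = e^{nu/2}$, so the combined prefactor $e^{-nu} \cdot e^{nu/2} = m^{1/2}(x,u)$ is restored. Performing the substitution in the expression for $k_{\Rz_0}$ and noting that $\sinh(-u) - \tfrac{1}{2}e^{u}|e^{-u}x|^2 = -\sinh u - \tfrac{1}{2}e^{-u}|x|^2$ produces $k_{\Rz_0^*}$ with the $\Phi_n$ coefficient unchanged and the sign of the $\sinh u$ contribution flipped; for $j \geq 1$ the same procedure inserts an extra factor $-e^{-u}$ coming from the $j$-th coordinate of $-e^{-u}x$, giving the stated formula for $k_{\Rz_j^*}$.

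Finally, subtracting and adding the expressions for $k_{\Rz_0}$ and $k_{\Rz_0^*}$ yields the two displayed identities: in the difference the $\Phi_n$ terms and the $e^{-u}|x|^2$ contributions cancel, leaving the clean $2\sinh u\,\Phi_n'(\cosh R)$; in the sum the two $\sinh u$ terms cancel, leaving $-n\Phi_n - e^{-u}|x|^2 \Phi_n'$. There is no serious obstacle here, as the entire argument is a chain-rule computation plus a bookkeeping application of the involution formula; the real content of the proposition lies in the algebraic simplifications occurring in the symmetric/antisymmetric combinations of $\Rz_0$ and $\Rz_0^*$, which will later allow one to decompose $\Rz_0 = \tfrac{1}{2}(\Rz_0 + \Rz_0^*) + \tfrac{1}{2}(\Rz_0 - \Rz_0^*)$ into pieces whose kernels have a more tractable structure at infinity.
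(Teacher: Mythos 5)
Your proof is correct and follows essentially the same route as the paper: compute $X_j m^{1/2}$ and $X_j \cosh R$, apply the chain rule to $k_{\opL^{-1/2}} = (\pi(2\pi)^{n/2})^{-1} m^{1/2}\,\Phi_n(\cosh R)$, pass to adjoints via the involution \eqref{eq:involution} using the invariance of $R$ under group inversion, and then take the symmetric and antisymmetric combinations for $j=0$. The only cosmetic difference is that the paper records $X_j \cosh R = x_j$ directly rather than splitting it into $\partial_{x_j}\cosh R = e^{-u} x_j$ and the prefactor $e^u$.
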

\begin{proof}
From \eqref{eq:vfs}, \eqref{eq:modular} and \eqref{eq:distance} it is not difficult to derive that, for $j=1,\dots,n$,
\begin{align*}
X_0 m^{1/2}(x,u) &= -\frac{n}{2} m^{1/2}(x,u), &X_0 \cosh R &= \sinh u - e^{-u} |x|^2/2, \\
 X_j m^{1/2}(x,u) &= 0, & \qquad X_j \cosh R &= x_j,
\end{align*}
where $R = d((x,u),(0,0))$.

As a consequence, from \eqref{eq:sqrtasymp} we deduce that
\begin{multline*}
k_{\Rz_0}(x,u) = X_0 k_{\opL^{-1/2}}(x,u) =  -\frac{n}{2} \frac{m^{1/2}(x,u)}{\pi (2\pi)^{n/2}} \Phi_n(\cosh R)\\  + \left( \sinh u - e^{-u} |x|^2/2 \right) \frac{m^{1/2}(x,u)}{\pi (2\pi)^{n/2}} \Phi_n'(\cosh R) 
\end{multline*}
and, for $j=1,\dots,n$,
\[
k_{\Rz_j}(x,u) = X_j k_{\opL^{-1/2}}(x,u) =   x_j \frac{m^{1/2}(x,u)}{\pi (2\pi)^{n/2}} \Phi_n'(\cosh R) .
\]

Now, from \eqref{eq:modular} and \eqref{eq:involution} we immediately derive that
\[
(m^{1/2} f)^*(x,u) = m^{1/2}(x,u) \overline{f(-e^{-u} x, -u)}.
\]
As $d((x,u)^{-1},(0,0)) = d((x,u),(0,0)) = R$, we deduce that
\begin{multline*}
k_{\Rz_0^*}(x,u) = k_{\Rz_0}^*(x,u) =  -\frac{n}{2} \frac{m^{1/2}(x,u)}{\pi (2\pi)^{n/2}} \Phi_n(\cosh R)\\  - \left( \sinh u + e^{-u} |x|^2/2 \right) \frac{m^{1/2}(x,u)}{\pi (2\pi)^{n/2}} \Phi_n'(\cosh R) ,
\end{multline*}
and the required expressions for $k_{\Rz_0-\Rz_0^*}$ and $k_{\Rz_0+\Rz_0^*}$ follow. Moreover,
for $j=1,\dots,n$,
\[
k_{\Rz_j^*}(x,u) = k_{\Rz_j}^*(x,u) =  - e^{-u} x_j \frac{m^{1/2}(x,u)}{\pi (2\pi)^{n/2}} \Phi_n'(\cosh R) ,
\]
as desired.
\end{proof}

Now, by means of the asymptotics in Proposition \ref{prp:sqrtasymp}, we easily deduce precise information on the behaviour of the convolution kernels $k_{\Rz_j}$ in a neighbourhood of the origin, showing that, up to integrable terms, they match the kernels of the Euclidean Riesz transforms for the standard Laplacian on $\RR^{n+1}$.

\begin{prp}\label{prp:local_riesz_asymp}
For $j=0,\dots,n$,
\[
k_{\Rz_j} = -\frac{\Gamma(1+n/2)}{\pi^{1+n/2}} K_j^0 + q_j^0, \qquad k_{\Rz_j^*} = \frac{\Gamma(1+n/2)}{\pi^{1+n/2}} K_j^0 + \tilde q_j^0,
\]
where $q_j^0$ and $\tilde q_j^0$ are locally integrable, while
\[
K_j^0(x,u) = \begin{cases}
(u^2+|x|^2)^{-(n+2)/2} \, u &\text{if } j=0,\\
(u^2+|x|^2)^{-(n+2)/2} \, x_j &\text{otherwise.}
\end{cases}
\]
\end{prp}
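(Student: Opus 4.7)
The plan is to combine the explicit formulas for the Riesz kernels from Proposition \ref{prp:kernel_formulas} with the local (at $X = 1^+$) asymptotics for $\Phi_n$ and $\Phi_n'$ from Proposition \ref{prp:sqrtasymp}, together with Taylor expansions of the elementary factors appearing in the formulas, in terms of $\rho \defeq \sqrt{u^2 + |x|^2}$. Since $k_{\opL^{-1/2}}$ is smooth away from the origin (Proposition \ref{prp:sqrtexact}), so are $k_{\Rz_j}$ and $k_{\Rz_j^*}$, as well as $K_j^0$; thus the issue is entirely local, and it suffices to check integrability in a neighbourhood of $(0,0)$.

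First I record the needed expansions near the origin. From \eqref{eq:distance} one gets
\[
\cosh R - 1 = \tfrac{u^2+|x|^2}{2} + O(\rho^3) = \tfrac{\rho^2}{2}\bigl(1 + O(\rho)\bigr),
\]
together with $m^{1/2}(x,u) = e^{-nu/2} = 1 + O(\rho)$, $\sinh u = u + O(\rho^3)$ and $e^{-u}|x|^2 = |x|^2 + O(\rho^3)$. Proposition \ref{prp:sqrtasymp} (applied with $k=0,1$) then gives
\[
\Phi_n(\cosh R) = \frac{\Gamma(n/2)\,2^{n/2-1}}{\rho^n}\bigl(1+O(\rho^{2\delta_0})\bigr), \qquad \Phi_n'(\cosh R) = -\frac{\Gamma(1+n/2)\,2^{n/2}}{\rho^{n+2}}\bigl(1+O(\rho)\bigr),
\]
where the relative errors come from combining the $(X-1)^\delta$ term of Proposition \ref{prp:sqrtasymp} with the $O(\rho)$ error in the expansion of $\cosh R - 1$; in particular, $\Phi_n(\cosh R) = O(\rho^{-n})$ and $\Phi_n'(\cosh R) = O(\rho^{-n-2})$. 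The constant identity $2^{n/2}/(\pi(2\pi)^{n/2}) = 1/\pi^{1+n/2}$ is what produces the coefficient in the statement.

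For the kernels with $j \geq 1$, I substitute into
\[
k_{\Rz_j}(x,u) = x_j\,\frac{m^{1/2}(x,u)}{\pi(2\pi)^{n/2}}\,\Phi_n'(\cosh R), \qquad k_{\Rz_j^*}(x,u) = -e^{-u} x_j \,\frac{m^{1/2}(x,u)}{\pi(2\pi)^{n/2}}\,\Phi_n'(\cosh R),
\]
so that the leading behaviour is $\mp\tfrac{\Gamma(1+n/2)}{\pi^{1+n/2}} x_j \rho^{-n-2} = \mp \tfrac{\Gamma(1+n/2)}{\pi^{1+n/2}} K_j^0(x,u)$, while the remainder is $O(|x_j| \rho^{-n-1}) = O(\rho^{-n})$, which is locally integrable on $\RR^{n+1}$. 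For $j=0$, I use
\[
k_{\Rz_0}(x,u) = \bigl(\sinh u - e^{-u}|x|^2/2\bigr)\,\frac{m^{1/2}(x,u)}{\pi(2\pi)^{n/2}}\,\Phi_n'(\cosh R) - \frac{n}{2}\,\frac{m^{1/2}(x,u)}{\pi(2\pi)^{n/2}}\,\Phi_n(\cosh R)
\]
(and the analogous formula for $\Rz_0^*$, obtained from Proposition \ref{prp:kernel_formulas}, with $\sinh u + e^{-u}|x|^2/2$ replacing $\sinh u - e^{-u}|x|^2/2$ and an overall sign). The $\sinh u$ contribution yields the expected main term $\mp\tfrac{\Gamma(1+n/2)}{\pi^{1+n/2}} u \rho^{-n-2}$; the terms containing $e^{-u}|x|^2 \Phi_n'(\cosh R)$ are bounded by a constant times $|x|^2 \rho^{-n-2} \leq \rho^{-n}$, and the piece $\Phi_n(\cosh R)$ is itself $O(\rho^{-n})$.

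Since $\rho^{-n}$ is locally integrable on $\RR^{n+1}$ (by the polar-coordinate computation $\int_{\rho\leq 1}\rho^{-n}\,\rho^n\,d\rho\,d\sigma < \infty$), all remainders are locally integrable, and the claimed decomposition follows. The only subtle point is the book-keeping of error exponents for small $n$: in the case $n=1$, $k=0$ one has $\delta = 1/2$ in Proposition \ref{prp:sqrtasymp}, giving a remainder of size $\rho^{-1}\cdot\rho$ from the $\Phi_n$ piece, but this is still majorised by a locally integrable function, so no additional care is needed.
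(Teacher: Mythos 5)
Your proposal is correct and follows essentially the same line as the paper's proof: substitute the explicit kernel formulas of Proposition \ref{prp:kernel_formulas}, apply the $X \to 1^+$ asymptotics of Proposition \ref{prp:sqrtasymp} via $\cosh R - 1 \simeq (u^2+|x|^2)/2$, and observe that all error terms are $O(\rho^{-n})$, hence locally integrable on the $(n+1)$-dimensional space. The only cosmetic differences from the paper are that you work directly with $\rho = \sqrt{u^2+|x|^2}$ throughout (rather than the Riemannian distance $R$, converting only at the end), and that for $j=0$ you expand $k_{\Rz_0}$ and $k_{\Rz_0^*}$ separately instead of going through the symmetrised combinations $k_{\Rz_0 \pm \Rz_0^*}$; both are trivial rearrangements of the same computation.
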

\begin{proof}
As the kernels $k_{\Rz_j}$ and $k_{\Rz_j^*}$ are locally integrable off the origin, we only need to analyse their behaviour in a neighbourhood of the origin.

Now, from the asymptotics in Proposition \ref{prp:sqrtasymp} and \eqref{eq:X_R} we deduce that, if $R$ is small, then
\[
\Phi_n(\cosh R) = O(R^{-n}), \qquad \Phi_n'(\cosh R) = - \frac{\Gamma(1+n/2) 2^{n/2}}{R^{n+2}} (1 + O(R^2)).
\]
Moreover, by \eqref{eq:distance}, if $R = d((x,u),(0,0))$, then $|u| \leq R$; so,
by \eqref{eq:modular}, in a neighbourhood of the origin,
\[
e^{-u} = 1 + O(R), \qquad m^{1/2}(x,u) = 1 + O(R), \qquad \sinh u = u (1 + O(R^2)),
\]
and
\[
1 + \frac{R^2}{2}(1+O(R^2)) = \cosh R = \cosh u + e^{-u} |x|^2/2 = 1 + \frac{u^2+|x|^2}{2}(1 + O(R)),
\]
thus
\begin{equation}\label{eq:small_distance}
R^2 = (|x|^2+u^2)(1+O(R)).
\end{equation}
Therefore, from Proposition \ref{prp:kernel_formulas} we deduce that
\begin{align*}
k_{\Rz_j}(x,u) &= - \frac{\Gamma(1+n/2)}{\pi^{1+n/2}} \frac{x_j}{R^{n+2}} + O(R^{-n}), \\
k_{\Rz_j^*}(x,u) &=  \frac{\Gamma(1+n/2)}{\pi^{1+n/2}} \frac{x_j}{R^{n+2}} + O(R^{-n}),
\end{align*}
for $j=1,\dots,n$, and moreover,
\begin{align*}
k_{\Rz_0-\Rz_0^*}(x,u) &= - 2\frac{\Gamma(1+n/2)}{\pi^{1+n/2}} \frac{u}{R^{n+2}} + O(R^{-n}), \\
k_{\Rz_0+\Rz_0^*}(x,u) &=  O(R^{-n}), 
\end{align*}
thus
\begin{align*}
k_{\Rz_0}(x,u) &= - \frac{\Gamma(1+n/2)}{\pi^{1+n/2}} \frac{u}{R^{n+2}} + O(R^{-n}),\\
k_{\Rz_0^*}(x,u) &= \frac{\Gamma(1+n/2)}{\pi^{1+n/2}} \frac{u}{R^{n+2}} + O(R^{-n}).
\end{align*}
As $G$ has dimension $n+1$, the terms $O(R^{-n})$ are locally integrable; moreover, by \eqref{eq:small_distance},
in the above formulas for the $k_{\Rz_j}$ we can replace the denominators $R^{n+2}$ with $(|x|^2+u^2)^{(n+2)/2}$ and obtain the desired expressions.
\end{proof}

It remains to analyse the behaviour of the kernels $k_{\Rz_j}$ at infinity.
By using the asymptotics of Proposition \ref{prp:sqrtasymp}, we will split those kernels into a ``main part'', with a relatively simple expression, and a ``remainder'', which is integrable at infinity. In order to make it easy to recognise those terms that are integrable and therefore can be included in the remainder, it is convenient to record here some integration formulas for radial functions against certain weights on $G$ (cf.\ \cite[Lemma 2.1]{MueT}).

\begin{lem}\label{lem:integrals}
Let $f : \RR \to [0,\infty)$ be measurable. Then
\begin{align}
\int_G \left|x\right| m^{1/2}(x,u) f(R) \,dx \,du &\simeq \int_0^1 f(r) \, r^{n+1} \,dr + \int_1^\infty f(r) \, e^{(1+n/2) r} \,dr , \label{eq:int_x} \\
\int_G \chi_{\{u \leq 1\}} \left|x\right| m^{1/2}(x,u) f(R) \,dx \,du &\simeq \int_0^1 f(r) \, r^{n+1} \,dr + \int_1^\infty f(r) \, e^{(1/2+n/2) r} \,dr , \label{eq:int_x_restu} \\
\int_G \left|\sinh u\right| m^{1/2}(x,u) f(R) \,dx \,du &\simeq \int_0^1 f(r) \, r^{n+1} \,dr + \int_1^\infty f(r) \, e^{(1+n/2) r} \,dr , \label{eq:int_u} \\
\int_G \chi_{\{|u| \leq 1\}} \left|u\right| m^{1/2}(x,u) f(R) \,dx \,du &\simeq \int_0^1 f(r) \, r^{n+1} \,dr + \int_1^\infty f(r) \, e^{nr/2} \,dr,  \label{eq:int_u_restu}
\end{align}
where $R = d((x,u),(0,0))$.
\end{lem}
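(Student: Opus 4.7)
The plan is to reduce all four integrals to one-dimensional integrals in $R$ via polar coordinates in $x \in \RR^n$ combined with the change of variables $r = |x| \mapsto R$ dictated by \eqref{eq:distance}. For fixed $u$, the relation $\cosh R = \cosh u + e^{-u} r^2/2$ gives $r\,dr = e^u \sinh R \, dR$ and $r = [2 e^u(\cosh R - \cosh u)]^{1/2}$ for $R > |u|$, whence, for any $F \geq 0$ depending only on $|x|$ and $u$,
\[
\int_G F(|x|, u) \,dx\,du = \left|S^{n-1}\right| \int_0^\infty \sinh R \int_{-R}^R F(r, u) \, [2 e^u(\cosh R - \cosh u)]^{(n-2)/2} e^u \,du\,dR,
\]
after swapping orders of integration. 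Substituting each weight and using $m^{1/2}(x,u) = e^{-nu/2}$ causes the $u$-exponentials to collapse, leaving a clean inner integral in $u$.

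For \eqref{eq:int_u}, the inner integral becomes $\int_{-R}^R |\sinh u| (\cosh R - \cosh u)^{(n-2)/2} \,du$, which is evaluated in closed form via the substitution $w = \cosh u$ as $\frac{4}{n}(\cosh R - 1)^{n/2}$; the stated asymptotics then follow immediately from $\sinh R \cdot (\cosh R - 1)^{n/2} \simeq R^{n+1}$ for small $R$ and $\simeq e^{(1+n/2)R}$ for large $R$. For \eqref{eq:int_u_restu}, the inner integral is $\int_{-\min(R,1)}^{\min(R,1)} |u|(\cosh R - \cosh u)^{(n-2)/2}\,du$, which is estimated directly by approximating $\cosh R - \cosh u \simeq (R^2 - u^2)/2$ when $R \leq 1$ and by $\cosh R - \cosh u \simeq e^R$ when $R \geq 1$ and $|u| \leq 1$; multiplying through by $\sinh R$ gives the claim.

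The more delicate cases are \eqref{eq:int_x} and \eqref{eq:int_x_restu}, where the inner integral reads $\int_I e^{u/2}(\cosh R - \cosh u)^{(n-1)/2}\,du$ with $I = (-R,R)$ or $(-R,1)$, respectively. The substitution $v = R - u$ together with the identity $\cosh R - \cosh(R-v) = 2\sinh(v/2)\sinh(R - v/2)$ recasts the integrand as $e^{(R-v)/2}[2\sinh(v/2)\sinh(R-v/2)]^{(n-1)/2}$. For $R$ large, the elementary estimates $\sinh(v/2) \simeq \min(v, e^{v/2})$ and analogously for $\sinh(R - v/2)$ show that the bulk region $v \in (1, 2R-1)$ contributes a factor of order $e^{nR/2}$, yielding the weight $e^{(1+n/2)R}$ in \eqref{eq:int_x}, whereas the further restriction $v \geq R - 1$ coming from $u \leq 1$ removes the portion $v \in (0, R-1)$ that carried the favourable decay $e^{-v/2}$, reducing the overall inner integral to size $e^{(n-1)R/2}$ and yielding \eqref{eq:int_x_restu}. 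The small-$R$ asymptotics are handled as in the earlier cases. The main obstacle, which I expect to require the most care, is pinpointing where the mass of the $u$-integral concentrates in the large-$R$ regime and verifying that the restriction $u \leq 1$ kills exactly the dominant contribution, accounting for the factor $e^{R/2}$ difference between \eqref{eq:int_x} and \eqref{eq:int_x_restu}.
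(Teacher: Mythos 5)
Your proposal is correct and takes essentially the same route as the paper: both pass to polar coordinates in $x$, change variable to $R$, and reduce to estimating an inner integral of the form $\int_{-R}^R w(u)\,e^{Nu/2}(\cosh R-\cosh u)^{(n+N)/2-1}\,du$ via the factorisation $\cosh R-\cosh u=2\sinh\frac{R+|u|}{2}\sinh\frac{R-|u|}{2}$. The paper organises the large-$R$ estimate as a three-term split in $u$ (interior $|u|\leq R-1$ plus two boundary strips) rather than via your substitution $v=R-u$, but this is a cosmetic difference and the underlying analysis is the same.
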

\begin{proof}
For any $N \in \RR$ and any measurable $w : \RR \to [0,\infty)$,
\[\begin{split}
&\int_G w(u) \left|x\right|^N m^{1/2}(x,u) f(R) \,dx \,du \\
&= \int_\RR \int_{\RR^n} w(u) |x|^N e^{-nu/2} f(\arccosh (\cosh u + e^{-u} |x|^2/2)) \,dx \,du \\
&\simeq \int_\RR \int_0^\infty w(u) \, e^{Nu/2} f(\arccosh (\cosh u + s)) \, s^{(n+N)/2-1} \,ds \,du \\
&= \int_0^\infty f(r) \left[ \sinh r \int_{-r}^r w(u) \, e^{Nu/2} (\cosh r-\cosh u)^{(n+N)/2-1} \,du \right] \,dr ,
\end{split}\]
thus our task is reduced to estimating the term in brackets in the last integral, i.e., the ``radial density'' of the measure $w(u) |x|^N m^{1/2}(x,u) \,dx \,du$, for each of the choices of $w$ and $N$ appearing in the left-hands sides of formulas \eqref{eq:int_x} to \eqref{eq:int_u_restu}.

Recall now that
\[
\cosh r - \cosh u = 2 \sinh \frac{r+|u|}{2} \sinh \frac{r-|u|}{2} \simeq \begin{cases}
e^r &\text{if } |u| \leq r-1,\\
(r-|u|) \sinh r &\text{if } r-1 \leq |u| \leq r.
\end{cases}
\]

Consequently, if $r \leq 1$, then we deduce that
\[\begin{split}
&\sinh r \int_{-r}^r w(u) \, e^{Nu/2} (\cosh r-\cosh u)^{(n+N)/2-1} \,du \\
&\simeq r^{n+N} \int_{-1}^1 w(rv) \, (1-|v|)^{(n+N)/2-1} \,dv,
\end{split}\]
and the latter quantity is indeed comparable to $r^{n+1}$ for each of the four choices of $N$ and $w$ corresponding to the formulas to be proved (indeed, we have $w(u) = 1$ for $|u| \leq 1$ and $N = 1$ in the case of \eqref{eq:int_x} and \eqref{eq:int_x_restu}, or $w(u) \simeq |u|$ for $|u| \leq 1$ and $N = 0$ in the case of \eqref{eq:int_u} and \eqref{eq:int_u_restu}).

If instead $r \geq 1$, then
\[\begin{split}
&\sinh r \int_{-r}^r w(u) \, e^{Nu/2} (\cosh r-\cosh u)^{(n+N)/2-1} \,du \\
&\simeq e^{(n+N)r/2} \int_{|u| \leq r-1} w(u) \, e^{Nu/2} \,du \\
&\qquad+ e^{nr/2} \int_0^1 t^{(n+N)/2-1} w(t-r) \,dt + e^{(n/2+N)r} \int_0^1 t^{(n+N)/2-1} w(r-t) \,dt .
\end{split}\]
Now, if we take $w(u) = 1$ and $N = 1$, then the above quantity is comparable to $e^{(n/2+1)r}$, which completes the proof of \eqref{eq:int_x}. If instead we take $w(u) = \chi_{\{u \leq 1\}}$ and $N = 1$, then the above quantity is comparable to $e^{(n/2+1/2)r}$, thus completing the proof of \eqref{eq:int_x_restu}. Further, if we take $w(u) = \left|\sinh u\right|$ and $N = 0$, then the above quantity is comparable to $e^{(n/2+1)r}$, thus proving \eqref{eq:int_u}. Finally, if we take $w(u) = |u| \chi_{\{|u|\leq 1\}}$ and $N = 0$, then the above quantity is comparable to $e^{nr/2}$, thus completing the proof of \eqref{eq:int_u_restu}.
\end{proof}

For a function $F : \RR^n \to \CC$ and $\lambda>0$, we write $F_{(\lambda)}$ for the rescaled function
\[
F_{(\lambda)}(x) = \lambda^{-n} F(\lambda^{-1} x).
\]
By exploiting the precise asymptotics of Proposition \ref{prp:sqrtasymp}, we can derive the following formulas; in the case $n=1=j$, a similar result can be found in \cite[Lemma 6]{S}.

\begin{prp}\label{prp:kernel_reduction}
We can write
\begin{align*}
k_{\Rz_0-\Rz_0^*} &= - \frac{2 \Gamma(1+n/2)}{\pi^{1+n/2}} (\tilde K_0 + K_0) + q_0, \\
k_{\Rz_j^*} &= - \frac{2 \Gamma(1+n/2)}{\pi^{1+n/2}} K_j + q_j,
\end{align*}
for $j=1,\dots,n$, where $q_0,q_j$ are integrable at infinity, while
\begin{align*}
\tilde K_0(x,u) &= r_0(x) \frac{\chi_{ \{|u| \geq 1\} }}{u} , \\
K_0(x,u) &= \left[ (r_0)_{(e^{u})}(x) - r_0(x) \right]  \frac{\chi_{ \{u \geq 1\} }}{u} , \\
K_j(x,u) &=  r_j(x) \frac{\chi_{ \{u \leq -1\} }}{u}  .
\end{align*}
and
\[
r_0(x) = (1+|x|^2)^{-1-n/2}, \qquad r_j(x) = x_j (1+|x|^2)^{-1-n/2}.
\]
\end{prp}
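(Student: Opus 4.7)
The plan is to apply the infinity asymptotics of $\Phi_n'$ from Proposition \ref{prp:sqrtasymp},
\[
\Phi_n'(\cosh R) = -\frac{\Gamma(1+n/2)}{(\cosh R)^{1+n/2}\log\cosh R}\bigl(1 + O(1/\log\cosh R)\bigr),
\]
to the formulas of Proposition \ref{prp:kernel_formulas}, identify the dominant term of each Riesz kernel on $\{u \geq 1\}$ and on $\{u \leq -1\}$ separately, and verify that the resulting remainders are integrable at infinity. On the strip $\{|u| < 1\}$, where all three main terms $\tilde K_0$, $K_0$, $K_j$ vanish by construction, one has $\cosh R \simeq 1+|x|^2$, so Proposition \ref{prp:kernel_formulas} immediately yields $|k_{\Rz_j^*}|+|k_{\Rz_0-\Rz_0^*}| \lesssim (|x|+|u|)\bigl((1+|x|^2)^{1+n/2}\log(1+|x|^2)\bigr)^{-1}$, hence the kernels themselves lie in $L^1(\{|u|<1,\,|x|\geq 2\})$ and are absorbed into the remainder.

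The main regime to analyse is $\{u \leq -1\}$, where the key algebraic identities are
\[
\cosh R = \tfrac{e^{-u}}{2}(1+e^{2u}+|x|^2), \qquad \frac{e^{-u}\,m^{1/2}(x,u)}{(\cosh R)^{1+n/2}} = \frac{2^{1+n/2}}{(1+e^{2u}+|x|^2)^{1+n/2}},
\]
together with $\log\cosh R = -u + \log((1+e^{2u}+|x|^2)/2)$. Inserted into the formula for $k_{\Rz_j^*}$, these reduce matters to comparing $\tfrac{2\Gamma(1+n/2)\,x_j}{\pi^{1+n/2}(1+e^{2u}+|x|^2)^{1+n/2}\log\cosh R}$ with $-\tfrac{2\Gamma(1+n/2)}{\pi^{1+n/2}}K_j(x,u) = \tfrac{2\Gamma(1+n/2)\,x_j}{\pi^{1+n/2}(1+|x|^2)^{1+n/2}(-u)}$. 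By the mean value theorem the replacement $1+e^{2u}+|x|^2 \to 1+|x|^2$ costs $O(e^{2u}/(1+|x|^2)^{2+n/2})$, while $1/\log\cosh R - 1/(-u) = O((1+\log(1+|x|^2))/u^2)$ handles the denominator; multiplied by $|x|$ and integrated separately in $x \in \RR^n$ and in $u \leq -1$, both corrections, together with the $O(1/|u|)$ relative error from the $\Phi_n'$ asymptotic, give finite contributions. The parallel computation, based on $\sinh u = -\tfrac{e^{-u}}{2}(1-e^{2u})$, applied to $k_{\Rz_0-\Rz_0^*}$ on $\{u \leq -1\}$ identifies the main term $-\tfrac{2\Gamma(1+n/2)}{\pi^{1+n/2}}\tilde K_0$.

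For $u \geq 1$, the mirror identity $\cosh R = (e^u/2)(1+e^{-2u}+|y|^2)$ with $y = e^{-u}x$ makes the rescaling in $K_0$ transparent: the leading behaviour of $k_{\Rz_0-\Rz_0^*}$ becomes $-\tfrac{2\Gamma(1+n/2)\,(1-e^{-2u})\,e^{-nu}}{\pi^{1+n/2}(1+e^{-2u}+|y|^2)^{1+n/2}\log\cosh R}$, to be compared with $-\tfrac{2\Gamma(1+n/2)}{\pi^{1+n/2}}(\tilde K_0+K_0)(x,u) = -\tfrac{2\Gamma(1+n/2)\,e^{-nu}}{\pi^{1+n/2}(1+|y|^2)^{1+n/2}\,u}$, since $\tilde K_0+K_0 = (r_0)_{(e^u)}/u$ on $\{u\geq 1\}$. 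The three resulting errors---$O(e^{-2u})$ (from $(1-e^{-2u})\to 1$ and $(1+e^{-2u}+|y|^2)\to(1+|y|^2)$), $O(\log(1+|y|^2)/u)$ (from $\log\cosh R \to u$), and $O(1/u)$ (from the $\Phi_n'$ asymptotic)---give, after the change $dx = e^{nu}\,dy$ which cancels the $e^{-nu}$ prefactor, integrable contributions on $\{u\geq 1\}\times\RR^n$. The main anticipated obstacle is the bookkeeping of these simultaneous errors while preserving the cancellation between the two pieces: indeed, a quick check via \eqref{eq:int_x} with $f(r) \simeq e^{-(1+n/2)r}/r$ shows that both $k_{\Rz_j^*}$ and $K_j$ are only logarithmically non-integrable at infinity individually, so the integrability of $q_j$ hinges on the precise matching of two logarithmic divergences, and the simplifications above must be performed in an order that keeps this matching intact.
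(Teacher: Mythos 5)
Your overall approach is essentially the same as the paper's: substitute the asymptotics of $\Phi_n'$ from Proposition \ref{prp:sqrtasymp} into the formulas of Proposition \ref{prp:kernel_formulas}, isolate the leading term on the relevant half-space, and bound the discrepancy. The algebraic identities you use (e.g.\ $\cosh R = \tfrac{e^{-u}}{2}(1+e^{2u}+|x|^2)$ and the consequent simplification of $e^{-u}m^{1/2}/(\cosh R)^{1+n/2}$) are exactly the ones that appear, in mirrored form, in the paper's display \eqref{eq:asymp_coshRlogcoshR}, and your error estimates (mean value theorem for the replacement $1+e^{2u}+|x|^2 \to 1+|x|^2$, the bound $1/\log\cosh R - 1/(-u) = O((1+\log(1+|x|^2))/u^2)$, the $O(1/|u|)$ error from the $\Phi_n'$ asymptotic) are all correct and precisely match the ones handled in the paper's display \eqref{eq:integrability}.

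The one genuine difference is organisational rather than substantive, and it hides a small gap in your write-up. The paper works with $k_{\Rz_j}$ (not $k_{\Rz_j^*}$) on the region $\{u\geq 1\}$, disposes of the remainder and of the complementary region $\{u\leq 1\}$ in one stroke via the integration formulas \eqref{eq:int_x} and \eqref{eq:int_x_restu}, and only at the very end takes the involution $k\mapsto k^*$ to land on $\{u\leq -1\}$ with the required factor $r_j(x)$. You instead work directly with $k_{\Rz_j^*}$ on $\{u\leq -1\}$. That is perfectly legitimate (indeed, after the involution the two computations are literally the same), but it means you are responsible for showing, separately, that $k_{\Rz_j^*}$ for $j=1,\dots,n$ is integrable on the region $\{u\geq 1\}$, where your main term $K_j$ vanishes. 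You address the strip $\{|u|<1\}$ and the region $\{u\geq 1\}$ for $j=0$, but $\{u\geq 1\}$ for $j\geq 1$ is left unmentioned. The check is routine: on $\{u\geq 1\}$ the mirror identity $\cosh R = (e^u/2)(1+e^{-2u}+|e^{-u}x|^2)$ gives $|k_{\Rz_j^*}(x,u)| \lesssim e^{-(2+n)u}|x|(1+|e^{-u}x|^2)^{-1-n/2}/u$, and after the substitution $y=e^{-u}x$ the $u$-integral becomes $\int_1^\infty e^{-u}u^{-1}\,du<\infty$. Equivalently, this is exactly the content of the paper's use of \eqref{eq:int_x_restu} before taking adjoints. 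You should state this explicitly. Your closing remark about the two logarithmic divergences is a fair observation of why the constant $2\Gamma(1+n/2)/\pi^{1+n/2}$ must be exact, but it does not require extra work beyond the bounds you already set up.
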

\begin{proof}
We start with the kernel $k_{\Rz_j^*}$ for $j=1,\dots,n$. From the formula in Proposition \ref{prp:kernel_formulas} and the asymptotics in Proposition \ref{prp:sqrtasymp} we deduce that
\[
k_{\Rz_j}(x,u) = 
 - \frac{\Gamma\left(1+\frac{n}{2}\right)}{\pi (2\pi)^{n/2}} \frac{x_j \, m^{1/2}(x,u)}{(\cosh R)^{1+n/2} \log \cosh R} \left(1 + O\left(\frac{1}{R}\right)\right).
\]
By \eqref{eq:int_x}, the term corresponding to the Big-O in the expression above is integrable at infinity; moreover, by \eqref{eq:int_x_restu}, the part of the above expression where $u \leq 1$ is integrable at infinity too. Thus $k_{\Rz_j}(x,u)$ differs from
\begin{equation}\label{eq:rj_first_asymp}
 -  \frac{\Gamma\left(1+\frac{n}{2}\right)}{\pi (2\pi)^{n/2}} \frac{x_j \, m^{1/2}(x,u)}{(\cosh R)^{1+n/2} \log \cosh R} \chi_{\{u \geq 1\}}
\end{equation}
by a term which is integrable at infinity.

We now observe that, if $u \geq 1$, then, by \eqref{eq:distance},
\[
\cosh R = \frac{1}{2} \left( e^u + e^{-u} + e^{-u} |x|^2 \right) = \frac{e^u}{2} (1 + |e^{-u} x|^2) (1 + O(e^{-2u})),
\]
and moreover
\[
\log \cosh R = u + \log (1 + |e^{-u} x|^2) - \log 2 + O(e^{-2u}),
\]
thus
\[
\begin{split}
\frac{1}{\log \cosh R} &= \frac{1}{u} - \frac{ \log (1 + |e^{-u} x|^2) - \log 2 + O(e^{-2u})}{u \log \cosh R} \\
&= \frac{1}{u} \left( 1 + O\left( \frac{1 + \log(1 + |e^{-u} x|^2)}{u} \right)  \right)
\end{split}
\]
(here we also used that $\cosh R \geq \cosh |u|$, thus $\log\cosh R \gtrsim u$ for $u \geq 1$),
and
\begin{multline}\label{eq:asymp_coshRlogcoshR}
\frac{m^{1/2}(x,u)}{ (\cosh R)^{1+n/2} \log \cosh R}  = 2^{1+n/2} \, \frac{e^{-(1+n)u} (1 + |e^{-u} x|^2)^{-1-n/2} }{u}  \\
\times \left( 1 + O\left( \frac{1 + \log(1 + |e^{-u} x|^2)}{u} \right)  \right),
\end{multline}
in the region where $u \geq 1$.

Consequently, we can rewrite \eqref{eq:rj_first_asymp} as
\begin{multline}\label{eq:rj_second_asymp}
- \frac{2 \Gamma\left(1+\frac{n}{2}\right)}{\pi^{1+n/2}} \frac{e^{-nu} (e^{-u}x_j) (1 + |e^{-u} x|^2)^{-1-n/2} }{u} \chi_{\{u \geq 1\}}  \\
\times \left( 1 + O\left( \frac{1 + \log(1 + |e^{-u} x|^2)}{u} \right)  \right)
\end{multline}
The term corresponding to the Big-O in \eqref{eq:rj_second_asymp} is integrable on $G$, as
\begin{equation}\label{eq:integrability}
\begin{split}
&\int_{1}^\infty \int_{\RR^n} \frac{e^{-nu}}{u} |e^{-u} x|  (1 + |e^{-u} x|^2)^{-1-n/2}   \frac{1 + \log(1 + |e^{-u} x|^2)}{u} \,dx \,du \\
&= \int_{1}^\infty \int_{\RR^n} \frac{1}{u^2} |x|  (1 + |x|^2)^{-1-n/2}  \left(1 + \log(1 + |x|^2)   \right)  \,dx \,du < \infty.
\end{split}
\end{equation}
By taking adjoints (see \eqref{eq:involution}) in the remaining term of \eqref{eq:rj_second_asymp}, we finally obtain that $k_{\Rz_j^*} = k_{\Rz_j}^*$ differs from
\[
- \chi_{\{u \leq -1\}} \frac{2 \Gamma\left(1+\frac{n}{2}\right)}{\pi^{1+n/2}} \frac{1}{u} x_j  (1 + |x|^2)^{-1-n/2}
\]
by a term which is integrable at infinity, as required.

We can analyse $k_{\Rz_0-\Rz_0^*}$ in a similar way. Namely, from the formula in Proposition \ref{prp:kernel_formulas} and the asymptotics in Proposition \ref{prp:sqrtasymp} we deduce that
\[
k_{\Rz_0-\Rz_0^*}(x,u) = 
 - \frac{\Gamma\left(1+\frac{n}{2}\right)}{\pi (2\pi)^{n/2}} 
  \frac{ (2 \sinh u) \, m^{1/2}(x,u)}{(\cosh R)^{1+n/2} \log \cosh R} 
	\left(1 + O\left(\frac{1}{R}\right)\right),
\]
thus, by \eqref{eq:int_u} and \eqref{eq:int_u_restu}, we deduce that $k_{\Rz_0-\Rz_0^*}$ differs from
\begin{equation}\label{eq:r0_first_asymp}
 - \frac{\Gamma\left(1+\frac{n}{2}\right)}{\pi (2\pi)^{n/2}} 
  \frac{ (2 \sinh u) \, m^{1/2}(x,u)}{(\cosh R)^{1+n/2} \log \cosh R} \chi_{\{|u| \geq 1\}} 
\end{equation}
by a term which is integrable at infinity. 

We now restrict our analysis to the region where $u \geq 1$ (indeed, the remaining region $u \leq -1$ can be recovered at the end due to skew-adjointness). If $u \geq 1$, then
\[
2\sinh u = e^u (1 + O(e^{-2u})),
\]
and this, combined with \eqref{eq:asymp_coshRlogcoshR}, allows us to rewrite the part of \eqref{eq:r0_first_asymp} where $u \geq 1$ as
\[
 - \frac{2 \Gamma\left(1+\frac{n}{2}\right)}{\pi^{1+n/2}} 
	 \frac{e^{-nu} (1 + |e^{-u} x|^2)^{-1-n/2} }{u} \chi_{\{u \geq 1\}} 
\left( 1 + O\left( \frac{1 + \log(1 + |e^{-u} x|^2)}{u} \right) \right).
\]
By arguing as in \eqref{eq:integrability}, one sees that the Big-O term in the previous expression gives rise to an integrable term on $G$. Thus we conclude that, in the region where $u \geq 1$, $k_{\Rz_0-\Rz_0^*}$ differs from 
\[
 - \frac{2 \Gamma\left(1+\frac{n}{2}\right)}{\pi^{1+n/2}} 
	 \frac{e^{-nu} (1 + |e^{-u} x|^2)^{-1-n/2} }{u} \chi_{\{u \geq 1\}} \\
\]
by a term which is integrable at infinity.
By taking adjoints (see \eqref{eq:involution}), we also deduce that, in the region where $u \leq -1$, $k_{\Rz_0-\Rz_0^*}$ differs from 
\[
  -\frac{2 \Gamma\left(1+\frac{n}{2}\right)}{\pi^{1+n/2}} 
	 \frac{(1 + | x|^2)^{-1-n/2} }{u} \chi_{\{u \leq -1\}} 
\]
by a term which is integrable at infinity. Thus, by summing the previous two expressions, we deduce that $k_{\Rz_0-\Rz_0^*}$ differs from
\[
-\frac{2 \Gamma\left(1+\frac{n}{2}\right)}{\pi^{1+n/2}} \frac{1}{u} (\chi_{\{u \geq 1\}} e^{-nu} (1 + |e^{-u} x|^2)^{-1-n/2} +\chi_{\{u \leq -1\}} (1 + |x|^2)^{-1-n/2})
\]
by a term which is integrable at infinity. This is easily seen to match the required expression, as
\[\begin{split}
\tilde K_0(x,u) + K_0(x,u) &= r_0(x) \frac{\chi_{ \{|u| \geq 1\} }}{u} + \left[ (r_0)_{(e^{u})}(x) - r_0(x) \right]  \frac{\chi_{ \{u \geq 1\} }}{u} \\
&= \frac{1}{u} \left[ (r_0)_{(e^{u})}(x)  \chi_{ \{u \geq 1\} } +  r_0(x) \chi_{\{u \leq -1\}} \right],
\end{split}\]
and we are done.
\end{proof}

\section{Analysis of the local part and reduction to the part at infinity}\label{s:local}

The formulas in Proposition \ref{prp:local_riesz_asymp} show that the local behaviour of the kernels of the Riesz transforms and their adjoints on $G$ is analogous to that of the corresponding Euclidean kernels on $\RR^{n+1}$, which are standard Calder\'on--Zygmund operators. Based on this, it is not difficult to show, by using the argument described, e.g., in \cite[Lemma 7]{GQS}, that the local part of the kernels $k_{\Rz_j}$ and $k_{\Rz_j^*}$ define operators which are of weak type $(1,1)$ and bounded on $L^p(G)$ for all $p \in (1,\infty)$.
We present here a slightly different approach, based instead on the Calder\'on--Zygmund theory of \cite{HS}. This has for us the technical advantage that it implies at once the boundedness from $H^1(G)$ to $L^1(G)$ too.

We first recall a useful fact about localisation of $L^p$ convolutors, which is true on any Lie group $G$. We include a proof for the reader's convenience.

\begin{lem}\label{lem:cvp_module_cinfty}
Let $p \in [1,\infty]$. Any $\zeta \in C_c^\infty(G)$ is a multiplier of $Cv^p(G)$, i.e., $\zeta K \in Cv^p(G)$ for all $K \in Cv^p(G)$.
\end{lem}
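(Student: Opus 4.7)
The plan is to use Fourier inversion in the global coordinates $g=(x,u)$ to reduce the problem to a uniform estimate on a family of modulated kernels. Since $\zeta \in C_c^\infty(\RR^{n+1})$, its Euclidean Fourier transform $\widehat\zeta$ is Schwartz and in particular lies in $L^1(\RR^{n+1})$. Inserting Fourier inversion, $\zeta(g) = \int \widehat\zeta(\eta) e^{i g \cdot \eta} \,d\eta$, into $f * (\zeta K)$ and interchanging the order of integration (justified first for smooth $K$ and then extended by density / weak-$*$ approximation of $K$ by $L^1$ kernels) yields
\[
f * (\zeta K) = \int_{\RR^{n+1}} \widehat\zeta(\eta) \bigl[ f * (\chi_\eta K) \bigr]\,d\eta, \qquad \chi_\eta(g) \defeq e^{ig \cdot \eta}.
\]
By Minkowski's integral inequality, the lemma will then reduce to the uniform-in-$\eta \in \RR^{n+1}$ estimate $\|f * (\chi_\eta K)\|_p \lesssim \|K\|_{Cv^p(G)} \|f\|_p$, with implicit constant independent of $\eta$.

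For this uniform estimate, I would factor $\chi_\eta = \chi_\eta^u \, \chi_\eta^x$, where $\chi_\eta^u(g) \defeq e^{iu \eta_u}$ and $\chi_\eta^x(g) \defeq e^{ix \cdot \eta_x}$. The first factor $\chi_\eta^u$ \emph{is} a unitary character of $G$, since $u$ is additive in the group law \eqref{eq:operation}; consequently, the pointwise multiplication operator $M_{\chi_\eta^u}$ (an isometry on $L^p(G)$, since $|\chi_\eta^u| \equiv 1$) conjugates convolution by $K$ into convolution by $\chi_\eta^u K$, giving the required bound with constant $1$ for this factor. The second factor $\chi_\eta^x$ is only a character of the normal subgroup $\RR^n \subset G$, and a direct computation based on \eqref{eq:operation} shows that the analogous conjugation identity acquires a unimodular correction factor of the form $e^{ix' \cdot \eta_x (1-e^{u_0-u'})}$ that couples the source and target group variables of the convolution integral.

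The hard part will be to control this correction factor uniformly in $\eta_x \in \RR^n$. I plan to do so via a partial Fourier transform in the $x$-direction: since the normal subgroup $\RR^n$ is abelian, the convolution $f*K$ on $G$ can be rewritten, after a change of variables $y = e^{u_0 - u'} x'$, as a Euclidean convolution in $x$ over each horizontal slice, with a $u_0$-dependent kernel obtained from $K$ by dilation and rescaling. In this representation, multiplying $K$ by $\chi_\eta^x$ corresponds to a translation of the $x$-frequency variable by an amount depending (via the exponential $e^{u'-u_0}$) on the slice. Because translation and dilation are isometries of the Lebesgue measure, the fibrewise $L^p$-boundedness inherited from $T_K$ then yields the desired uniform bound in $\eta_x$. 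Combining this with the previous paragraph and integrating against $\widehat\zeta \in L^1(\RR^{n+1})$ closes the argument, with the final constant $C_\zeta$ controlled by $\|\widehat\zeta\|_1$.
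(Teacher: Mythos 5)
Your reduction of the lemma to the uniform estimate $\|\chi_\eta K\|_{Cv^p(G)} \lesssim \|K\|_{Cv^p(G)}$ (with constant independent of $\eta$) is a legitimate reformulation, and the treatment of the factor $\chi_\eta^u$ is correct: it is a genuine character of $G$, so conjugation by the unitary multiplication operator $M_{\chi_\eta^u}$ sends $T_K$ to $T_{\chi_\eta^u K}$ and the $Cv^p$-norm is preserved. The problem is the factor $\chi_\eta^x$, and here the argument has a genuine gap that I don't see how to close.

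Concretely, after the partial Fourier transform $\Four$ in $x$, convolution by $K$ becomes the operator-valued multiplier $\xi \mapsto M_K(\xi)$ with fiber kernel $H_K^\xi(u,u') = (\Four K)(e^{u'}\xi, u-u')$ from \eqref{eq:mult_kernel}. Replacing $K$ by $\chi_{\eta_x}^x K$ shifts the first argument: the new fiber kernel is $(u,u') \mapsto (\Four K)(e^{u'}\xi - \eta_x, u-u')$. The point you appeal to — ``translation and dilation are isometries'' — does not apply, because for fixed $\xi$ and $\eta_x \neq 0$ the set $\{e^{u'}\xi - \eta_x : u' \in \RR\}$ is \emph{not} a dilation orbit of $\RR^n$ (it is an affinely translated half-line that does not pass through the origin, and in one dimension even crosses $0$). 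Consequently there is no change of variable $u' \mapsto v'$ and no $\xi' \in \RR^n\setminus\{0\}$ for which $(\Four K)(e^{u'}\xi - \eta_x, u-u') = (\Four K)(e^{v'}\xi', v-v')$, so the operator $M_{\chi_{\eta_x}^x K}(\xi)$ is \emph{not} unitarily intertwined with any member of the original family $\{M_K(\xi')\}_{\xi'}$. In representation-theoretic terms (cf.\ Remark~\ref{rem:mackey}), the coadjoint orbits in $\widehat{\RR^n}$ are the open half-lines through the origin; translating by $\eta_x$ moves you off the orbit that parametrises the representation $\sigma^\xi$, so $\sigma^\xi(\chi_{\eta_x}^x K)$ is simply not of the form $\sigma^{\xi'}(K)$. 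Equivalently, on $G$ itself one has
\[
\mu_{\eta_x}\,T_K\,\mu_{-\eta_x} f(x,u) = \int_G f(x-e^{u-u'}x',u-u')\, e^{i e^{u-u'}x'\cdot\eta_x} K(x',u')\,dx'\,du',
\]
where $\mu_{\eta_x}$ is multiplication by $\chi_{\eta_x}^x$; the extra factor $e^{i e^{u-u'}x'\cdot\eta_x}$ couples the integration variable $(x',u')$ to the target variable $u$, so this is not convolution by any kernel, and the conjugation by $\mu_{\eta_x}$ does not act on $Cv^p(G)$. Your proposed fibrewise argument, as written, is therefore asserting a nontrivial uniform operator bound for this one-parameter deformation with no mechanism to prove it.

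The paper takes an entirely different, soft route: for $p \in (1,\infty)$ one invokes Cowling's result that compactly supported elements of the Fig\`a-Talamanca--Herz algebra $A_p(G)$ multiply $Cv^p(G)$, and then shows $C_c^\infty(G) \subseteq A_p(G)$ using the Dixmier--Malliavin factorisation theorem (every $\phi \in C_c^\infty(G)$ is a finite sum of convolutions $\psi*\eta$ with $\psi,\eta \in C_c^\infty(G)$). This abstract argument sidesteps the modulation-invariance question entirely. If you want a concrete route, you would need to either prove the uniform modulation bound by a genuinely new device, or produce a decomposition of $\zeta$ adapted to the group structure of $G$ rather than to the ambient Euclidean Fourier transform.
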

\begin{proof}
In the cases $p = 1$ and $p=\infty$, the space $Cv^p(G)$ is just the space of finite complex measures on $G$, so it is clearly closed under multiplication by elements of $C^\infty_c(G)$.

Assume now $p \in (1,\infty)$. From the results of \cite{C}, it follows that any compactly supported element of the the Fig\`a-Talamanca--Herz algebra $A_p(G)$ is a multiplier of $Cv^p(G)$. (Specifically, in \cite{C} it is shown that $Cv^p(G)$ is the dual of a certain Banach $*$-algebra $\overline{A}_p(G)$ containing all the compactly supported elements of $A_p(G)$, thus $Cv^p(G)$ is naturally a module over $\overline{A}_p(G)$.)
To conclude, it just remains to show that $C^\infty_c(G) \subseteq A_p(G)$. However, this follows immediately from the definition of $A_p(G)$ and the Dixmier--Malliavin Theorem \cite{DM}, allowing us to represent any $\phi \in C^\infty_c(G)$ as a finite sum of functions of the form $\psi * \eta$ with $\psi,\eta \in C_c^\infty(G)$.
\end{proof}

We now revert to the case of the $ax+b$-group $G$ and proceed with the proof of the local boundedness of the Riesz transforms $\Rz_j$ and $\Rz_j^*$.

\begin{prp}\label{prp:local_boundedness}
Let $\zeta \in C_c^\infty(G)$. Then, for $j=0,\dots,n$, the convolution operators with kernels $\zeta k_{\Rz_j}$ and $\zeta  k_{\Rz_j^*}$, as well as their adjoints, are of weak type $(1,1)$, bounded on $L^p(G)$ for all $p \in (1,2]$, and bounded from $H^1(G)$ to $L^1(G)$.
\end{prp}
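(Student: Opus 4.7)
My plan is to combine the decomposition from Proposition \ref{prp:local_riesz_asymp} with the Calderón--Zygmund theory developed in \cite{HS}. Writing
\[
\zeta k_{\Rz_j} = -c_n \, \zeta K_j^0 + \zeta q_j^0, \qquad \zeta k_{\Rz_j^*} = c_n \, \zeta K_j^0 + \zeta \tilde q_j^0
\]
with $c_n = \Gamma(1+n/2)/\pi^{1+n/2}$, the remainder pieces are compactly supported and locally integrable, hence integrable on $G$. Convolution by any $L^1(G)$ function is bounded on every $L^p(G)$ and maps $L^1(G)$ to $L^1(G)$; it is in particular of weak type $(1,1)$ and bounded from $H^1(G)$ to $L^1(G)$, and the same applies to its adjoint (which is again convolution by an $L^1$ function). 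The problem is therefore reduced to the main term $\zeta K_j^0$.

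To handle $\zeta K_j^0$, I will verify the two hypotheses required by the Calderón--Zygmund theory of \cite{HS}: membership in $Cv^2(G)$ and an integral Hörmander condition. For the former, the $L^2$-boundedness of $\Rz_j$ on $G$ is a consequence of the identity $\sum_{j=0}^n \|X_j \opL^{-1/2} f\|_2^2 = \|f\|_2^2$ (itself derived from the spectral theorem and from $\opL = \sum_{j=0}^n X_j^* X_j$, since $X_j^* = -X_j$), so $k_{\Rz_j} \in Cv^2(G)$; Lemma \ref{lem:cvp_module_cinfty} then gives $\zeta k_{\Rz_j} \in Cv^2(G)$, and since $\zeta q_j^0 \in L^1(G) \subset Cv^2(G)$, the main term $\zeta K_j^0$ belongs to $Cv^2(G)$ as well.

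For the integral Hörmander condition, compact support of $\zeta$ is essential. The kernels $K_j^0$ are classical Euclidean Riesz-type kernels on $\RR^{n+1}$, whose Euclidean gradient is pointwise $O((|x|^2 + u^2)^{-(n+2)/2})$. The cutoff $\zeta$ confines the entire analysis to a bounded neighborhood of the identity of $G$, on which the group distance is comparable to the Euclidean distance and the group product and inverse are smooth perturbations of their Euclidean counterparts. The mean value theorem, applied to the left-translated increment $\zeta K_j^0(\cdot \, y^{-1}) - \zeta K_j^0(\cdot)$ and combined with this pointwise gradient bound, then yields the integral Hörmander estimate required by \cite{HS}. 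Once this is in place, the Hebisch--Steger theory delivers weak type $(1,1)$, $L^p(G)$-boundedness for $p \in (1,2]$, and (via the atomic characterization of $H^1(G)$ from \cite{V}) boundedness from $H^1(G)$ to $L^1(G)$ for convolution by $\zeta K_j^0$, and hence for $\zeta k_{\Rz_j}$ and $\zeta k_{\Rz_j^*}$.

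As for the adjoints, a direct computation from \eqref{eq:involution} yields $(\zeta k)^* = \tilde\zeta \, k^*$, where $\tilde\zeta(g) = \overline{\zeta(g^{-1})} \in C_c^\infty(G)$, so the adjoint of convolution by $\zeta k_{\Rz_j}$ is convolution by $\tilde\zeta k_{\Rz_j^*}$, and symmetrically for the adjoint of convolution by $\zeta k_{\Rz_j^*}$; both cases therefore fall under the preceding analysis. I expect the main obstacle to be the rigorous verification of the integral Hörmander condition: one must track the discrepancy between left group translations and Euclidean translations near the identity, and absorb both this discrepancy and the contribution of the derivatives of $\zeta$ into an integrable remainder. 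Compact support of $\zeta$ is what makes this technical step tractable.
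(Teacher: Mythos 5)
Your overall structure matches the paper's: decompose via Proposition~\ref{prp:local_riesz_asymp} into the Euclidean main term $\zeta K_j^0$ plus an integrable remainder, dispose of the remainder by Young's inequality, deduce $\zeta K_j^0 \in Cv^2(G)$ from $L^2$-boundedness of $\Rz_j$ together with Lemma~\ref{lem:cvp_module_cinfty}, handle adjoints by the identity $(\zeta k)^* = \overline{\zeta(\cdot^{-1})}\,k^*$ (which is the same as the paper's $(m^{-1}\zeta^*)k_{\Rz_j^*}$), and then apply the Calder\'on--Zygmund theory of \cite{HS}. The one place where you and the paper part ways is in how the Calder\'on--Zygmund estimate for $\zeta K_j^0$ is actually verified. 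You propose to check an ``integral H\"ormander condition'' directly via the mean value theorem and the pointwise Euclidean gradient bound for $K_j^0$; the paper instead performs a dyadic decomposition $\zeta K_j^0 = \sum_{\ell\le 0} \zeta_\ell K_j^0$ (with $\zeta_\ell$ supported on shells of radius $\sim 2^\ell$) and verifies the scale-by-scale hypotheses of \cite[Theorem~2.3 and Remark~2.4]{MV}, namely $\int |\zeta_\ell K_j^0|(1+2^{-\ell}R)^\epsilon \lesssim 1$ and $\int |X_k^\circ(\zeta_\ell K_j^0)| \lesssim 2^{-\ell}$. The dyadic route has two concrete advantages that your sketch glosses over: first, it makes the derivative bounds elementary to state and check, since each $\zeta_\ell K_j^0$ lives at a single scale and homogeneity of $K_j^0$ does the rest; second, and more importantly, it forces one to differentiate with the right-invariant fields $X_k^\circ$ rather than the Euclidean gradient, and the field $X_0^\circ = -\partial_u - n - x\cdot\nabla_x$ has a nonconstant coefficient and a zeroth-order term $-n$, so that $X_0^\circ K_j^0$ is a sum of two homogeneous pieces of degrees $-(n+1)$ and $-(n+2)$, not just a degree $-(n+2)$ gradient. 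Your ``smooth perturbation of the Euclidean setting'' heuristic is morally right on a compact neighbourhood, but to turn it into a proof you would have to track exactly this discrepancy and the contributions from $\nabla\zeta$; the dyadic formulation absorbs both effortlessly. The remaining ingredients you cite (remainder in $L^1$, $Cv^2$ module, adjoint reduction, $H^1\to L^1$ via the atomic theory of \cite{V}) are all correct and are used in the paper in the same way.
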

\begin{proof}
As $(\zeta k_{\Rz_j})^* = (m^{-1} \zeta^*) k_{\Rz_j^*}$ and $(\zeta k_{\Rz_j^*})^* = (m^{-1} \zeta^*) k_{\Rz_j}$, and moreover $m^{-1} \zeta^* \in C^\infty_c(G)$, the boundedness of the adjoints  is reduced to that of the original operators.

We note that, by Proposition \ref{prp:local_riesz_asymp},
\begin{equation}\label{eq:local_reduction}
\zeta  k_{\Rz_j} = -\frac{\Gamma(1+n/2)}{\pi^{1+n/2}} (\zeta K_j^0) + (\zeta q_j^0),
 \quad \zeta k_{\Rz_j^*} = \frac{\Gamma(1+n/2)}{\pi^{1+n/2}} (\zeta K_j^0) + (\zeta \tilde q_j^0),
\end{equation}
and the kernels $\zeta q_j^0$ and $\zeta \tilde q_j^0$ are integrable, so the boundedness properties of $\zeta k_{\Rz_j}$ and $\zeta k_{\Rz_j^*}$ are equivalent to those of $\zeta K_j^0$.  In particular, as $\Rz_j$ and $\Rz_j^*$ are $L^2(G)$-bounded and $Cv^2(G)$ is a $C^\infty_c(G)$-module (see Lemma \ref{lem:cvp_module_cinfty}), we deduce that $\zeta K_j^0 \in Cv^2(G)$ too.

In order to prove that the operator of convolution by $\zeta K_j^0$ has the desired boundedness properties, we shall apply the Calder\'on--Zygmund theory of \cite{HS}. Specifically, we will show that the kernel $\zeta K_j^0$ has a decomposition that satisfies the assumptions of \cite[Theorem 2.3]{MV}.

Consider a smooth nonnegative radial function $\eta$ on $\RR^{n+1}$ supported in an annulus and such that
\[
\sum_{\ell \in \ZZ} \eta(2^\ell ( x,  u)) = 1
\]
for all $(x,u) \in \RR^{n+1} \setminus \{(0,0)\}$. Set $\eta_\ell = \eta(2^{-\ell} \cdot)$ and $\eta_* = \sum_{\ell \leq 0} \eta_\ell$. We can identify $\eta_*$ with a smooth compactly supported function on $\RR^{n+1}$ which is identically $1$ in a neighbourhood of the origin. By an appropriate choice of $\eta$, we may also assume that $\eta_*$ is identically $1$ on the support of $\zeta$, thus $\zeta = \zeta \eta_* = \sum_{\ell \leq 0} \zeta_\ell$, with $\zeta_\ell = \zeta \eta_\ell$. Correspondingly, we decompose
\[
\zeta K_j^0 = \sum_{\ell \leq 0} \zeta_\ell K_j^0 .
\]

Now, recall that, by Proposition \ref{prp:local_riesz_asymp}, the kernels $K_j^0$ are smooth off the origin and homogeneous of degree $-(n+1)$ on $\RR^{n+1}$.
Recall also the vector fields formulas from \eqref{eq:vfs}:
\[
X_0 = \partial_u, \quad X_1 = e^u \partial_{x_1}, \quad \dots, \quad X_n = e^u \partial_{x_n}.
\]
If we define the right-invariant first-order differential operators $X_j^\circ$ by
\[
X_j^\circ f = (X_j f^*)^*,
\]
then it is readily checked (see \eqref{eq:involution}) that
\[
X_0^\circ = - \partial_u - n - x \cdot \nabla_x , \quad X_1^\circ = - \partial_{x_1}, \quad \dots, \quad X_n^\circ = - \partial_{x_n}.
\]
We then deduce that the functions $X_k^\circ K_j^0$ are also smooth off the origin, and moreover
\begin{itemize}
\item $X_k^\circ K_j^0$ is homogeneous of degree $-(n+2)$ on $\RR^{n+1}$ for $k=1,\dots,n$;
\item $X_0^\circ K_j^0$ is the sum of two homogeneous terms of degrees $-(n+1)$ and $-(n+2)$ on $\RR^{n+1}$ respectively.
\end{itemize}
Additionally, by the Leibniz rule,
\[
X_k^\circ (\zeta_\ell K_j^0) = (\tilde X_k^\circ \zeta_\ell) K_j^0 + \zeta_\ell X_k^\circ K_j^0,
\]
where the $\tilde X_k^\circ$ are the nonconstant parts of the $X_k^\circ$, namely,
\[
\tilde X_0^\circ = - \partial_u - x \cdot \nabla_x , \quad \tilde X_1^\circ = - \partial_{x_1}, \quad \dots, \quad \tilde X_n^\circ = - \partial_{x_n},
\]
and clearly, for all $\ell \leq 0$,
\[
\|\zeta_\ell\|_\infty \lesssim 1, \qquad \|\tilde X_k^\circ \zeta_\ell\|_\infty \lesssim 2^{-\ell}.
\]
As a consequence, for all $\ell \leq 0$, by homogeneity considerations,
\[
\| \zeta_\ell K_j^0 \|_\infty \lesssim 2^{-(n+1)\ell}, \qquad \| X_k^\circ(\zeta_\ell K_j^0) \|_\infty \lesssim 2^{-(n+2)\ell}.
\]

Now we observe that, for any $\ell \leq 0$, if $(x,u)$ is in the support of $\zeta_\ell$, then, by \eqref{eq:small_distance}, $R = d((x,u),(0,0)) \simeq |x| + |u| \simeq 2^{\ell}$.
Thus, for any $\ell \leq 0$ and $\epsilon > 0$, 
\[
\int_G | \zeta_\ell K_j^0 (x,u) | (1+ 2^{-\ell} R)^\epsilon \,dx \,du \lesssim_\epsilon 
\int_{|x|+|u|\simeq 2^{\ell}} 2^{-(n+1)\ell} \,dx \,du \simeq 1,
\]
and
\[
\int_G |X_k ((\zeta_\ell K_j^0)^*) (x,u)| \,dx \,du = \int_G |X_k^\circ (\zeta_\ell K_j^0) (x,u)| \,dx \,du \lesssim 2^{-\ell}.
\]

By \cite[Theorem 2.3 and Remark 2.4]{MV}, we conclude that the convolution operator with kernel $\zeta K_j^0$ is of weak type $(1,1)$, bounded on $L^p(G)$ for $p \in (1,2]$, and also bounded from $H^1(G)$ to $L^1(G)$. Due to the reduction \eqref{eq:local_reduction}, the same boundedness properties are shared by the local parts $\zeta k_{\Rz_j}$ and $\zeta k_{\Rz_j^*}$ of the Riesz transform kernels.
\end{proof}

Due to the boundedness of their local parts, the boundedness of the adjoint Riesz transforms is reduced to that of the corresponding parts at infinity, described in Proposition \ref{prp:kernel_reduction}.

\begin{cor}\label{cor:full_reduction}
Let $p \in (1,2]$. For the adjoint Riesz transform $\Rz_j^*$ with $j=1,\dots,n$, any of the following boundedness properties holds if and only if it holds for the convolution operator with kernel $K_j$:
\begin{itemize}
\item $L^p(G)$-boundedness;
\item weak type $(1,1)$;
\item boundedness from $H^1(G)$ to $L^1(G)$.
\end{itemize}
The same is true for the adjoint Riesz transform $\Rz_0^*$ and the kernel $\tilde K_0 + K_0$.
\end{cor}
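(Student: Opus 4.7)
The plan is to combine Propositions \ref{prp:kernel_reduction} and \ref{prp:local_boundedness} with the elementary remark that each of the three boundedness properties appearing in the statement is stable under adding a convolution operator whose kernel lies in $L^1(G)$ (recall that $H^1(G) \subseteq L^1(G)$, so convolution by an $L^1$ kernel, being $L^1(G)$-bounded by Young's inequality, is automatically bounded from $H^1(G)$ to $L^1(G)$). First I would fix $\zeta \in C_c^\infty(G)$ that equals $1$ in a neighbourhood of the origin and is supported in the strip $\{(x,u) : |u| < 1\}$. Inspection of the formulas in Proposition \ref{prp:kernel_reduction} shows that the kernels $K_j$ for $j=1,\dots,n$ and $\tilde K_0+K_0$ all vanish in the region $\{|u|<1\}$, so that $\zeta K_j \equiv 0$ and $\zeta(\tilde K_0+K_0) \equiv 0$.

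For $\Rz_j^*$ with $j \in \{1,\dots,n\}$, Proposition \ref{prp:kernel_reduction} supplies a constant $c_j$ and a distribution $q_j$ with $(1-\zeta)q_j \in L^1(G)$ such that $k_{\Rz_j^*} = c_j K_j + q_j$. Since $\zeta K_j \equiv 0$, one has $\zeta q_j = \zeta k_{\Rz_j^*}$, whose associated convolution operator has each of the three required boundedness properties by Proposition \ref{prp:local_boundedness}; combined with the $L^1$ control of $(1-\zeta)q_j$, this implies that convolution by $q_j$ shares all three properties as well. Plugging this back into $k_{\Rz_j^*} = c_j K_j + q_j$ immediately yields the desired equivalence between any of the three boundedness properties for $\Rz_j^*$ and for convolution by $K_j$.

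The case of $\Rz_0^*$ and $\tilde K_0+K_0$ requires one extra step, because Proposition \ref{prp:kernel_reduction} only describes $k_{\Rz_0-\Rz_0^*}$ rather than $k_{\Rz_0^*}$ directly. Applying the same reasoning to the decomposition $k_{\Rz_0-\Rz_0^*} = c_0(\tilde K_0+K_0) + q_0$, using the local bounds for both $\zeta k_{\Rz_0}$ and $\zeta k_{\Rz_0^*}$ provided by Proposition \ref{prp:local_boundedness}, shows that each of the three properties holds for the convolution operator associated with $\Rz_0-\Rz_0^*$ if and only if it holds for convolution by $\tilde K_0+K_0$. To transfer the equivalence from $\Rz_0-\Rz_0^*$ to $\Rz_0^*$, I would invoke the fact, recalled from \cite{HS} in the introduction, that $\Rz_0$ itself enjoys all three boundedness properties; the identity $\Rz_0^* = \Rz_0 - (\Rz_0-\Rz_0^*)$ then concludes the proof. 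No significant obstacle is expected in this argument: the hard analytic work has already been carried out in Propositions \ref{prp:kernel_reduction} and \ref{prp:local_boundedness}, and the corollary is essentially a bookkeeping consequence of those two statements.
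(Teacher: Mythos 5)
Your argument is correct and follows essentially the same route as the paper's: fix a cutoff $\zeta$ supported in $\{|u|<1\}$, apply Proposition \ref{prp:local_boundedness} to dispose of the local part, use Proposition \ref{prp:kernel_reduction} to see that the part at infinity differs from a multiple of $K_j$ (resp.\ $\tilde K_0+K_0$) by an integrable kernel, and for $\Rz_0^*$ pass through $\Rz_0-\Rz_0^*$ using the known boundedness of $\Rz_0$ from \cite{HS,V,MV}. The only cosmetic difference is that the paper performs the reduction from $\Rz_0^*$ to $\Rz_0-\Rz_0^*$ at the outset rather than at the end, and your explicit remark that all three properties are stable under adding an $L^1$ convolution kernel (via $H^1(G)\subseteq L^1(G)$ and Young's inequality for the right Haar measure) is a correct elaboration of a point the paper leaves implicit.
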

\begin{proof}
We already know (see \cite{HS,V,MV}) that $\Rz_0$ is of weak type $(1,1)$, bounded from $H^1(G)$ to $L^1(G)$, and bounded on $L^p(G)$ for all $p \in (1,2]$. Thus $\Rz_0^*$ has any of those boundedness properties if and only if $\Rz_0 - \Rz_0^*$ has.

Let us choose $\zeta \in C^\infty_c(G)$ be supported in the region where $|u| < 1$, and be identically one in a neighbourhood of the origin,
and decompose
\[
k_{\Rz_0-\Rz_0^*} = \zeta k_{\Rz_0-\Rz_0^*} + (1-\zeta) k_{\Rz_0-\Rz_0^*}, \qquad k_{\Rz_j^*} = \zeta k_{\Rz_j^*} + (1-\zeta) k_{\Rz_j^*}.
\]
By Proposition \ref{prp:local_boundedness}, we know that the ``local parts'' $\zeta k_{\Rz_0-\Rz_0^*}$ and $\zeta k_{\Rz_j^*}$ have all of the above boundedness properties, thus the problem is reduced to whether the ``parts at infinity'' $(1-\zeta) k_{\Rz_0-\Rz_0^*}$ and $(1-\zeta) k_{\Rz_j^*}$ do.

Now, the support of $\zeta$ is contained in the region $\{ |u| < 1 \}$, and therefore $1-\zeta$ is identically $1$ on $\{|u| \geq 1\}$. So, by Proposition \ref{prp:kernel_reduction}, for $j=1,\dots,n$,
\begin{align*}
(1-\zeta) k_{\Rz_0 - \Rz_0^*} &= - \frac{2 \Gamma(1+n/2)}{\pi^{1+n/2}} (\tilde K_0 + K_0) + (1-\zeta) q_0, \\
(1-\zeta) k_{\Rz_j^*} &= - \frac{2 \Gamma(1+n/2)}{\pi^{1+n/2}} K_j + (1-\zeta) q_j.
\end{align*}
As the kernels $(1-\zeta) q_0, (1-\zeta) q_j$ are integrable on $G$, the boundedness properties of $k_{\Rz_0 - \Rz_0^*}$ and $k_{\Rz_j^*}$ are indeed reduced to those of $\tilde K_0 + K_0$ and $K_j$ respectively.
\end{proof}

As we shall see in Section \ref{s:hardy} below, the boundedness from $H^1(G)$ to $L^1(G)$ fails for the adjoint Riesz transforms, so we set this property apart. As for the remaining boundedness properties, a further reduction is quickly provided by the analysis of the kernel $\tilde K_0$.

\begin{prp}\label{prp:tildeK0}
Let $\tilde K_0$ be as in Proposition \ref{prp:kernel_reduction}. Then the operator
\[
f \mapsto f * \tilde K_0
\]
is of weak type $(1,1)$ and bounded on $L^p(G)$ for all $p \in (1,\infty)$.
\end{prp}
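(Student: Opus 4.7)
The plan is to exploit the tensor-product structure $\tilde K_0(x,u) = r_0(x)\, k(u)$ with $k(u) = \chi_{\{|u|\geq 1\}}/u$: since $k$ is the kernel of the $1$-truncated Hilbert transform $H_{\geq 1}$ on $\RR$ and $r_0 \in L^1(\RR^n)$ (the density decays like $|x|^{-n-2}$), I expect right-convolution by $\tilde K_0$ on $G$ to factor as an Euclidean convolution in $x$ followed by a Hilbert-type transform in $u$. Concretely, starting from \eqref{eq:convolution} and substituting first $v = u-u'$ and then $y = e^v x'$, one obtains
\[
(f * \tilde K_0)(x,u) = \int_\RR \frac{\chi_{\{|u-v|\geq 1\}}}{u-v}\, F(x,v)\,dv, \qquad F(x,v) = \bigl( f(\cdot,v) *_{\RR^n} (r_0)_{(e^v)} \bigr)(x);
\]
in other words, $(f * \tilde K_0)(x,u) = H_{\geq 1}[F(x,\cdot)](u)$.

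Since the $(\lambda)$-rescaling preserves the $L^1(\RR^n)$ norm, $\|(r_0)_{(e^v)}\|_1 = \|r_0\|_1$ for every $v$; applying Young's inequality in $x$ at each fixed $v$, together with Fubini, yields $\|F\|_{L^p(G)} \leq \|r_0\|_1\, \|f\|_{L^p(G)}$ for all $p \in [1,\infty]$. For $p \in (1,\infty)$, I would apply the $L^p(\RR)$-boundedness of $H_{\geq 1}$ pointwise in $x$ and then integrate, obtaining
\[
\|f * \tilde K_0\|_{L^p(G)}^p \leq C_p^p \int_{\RR^n} \|F(x,\cdot)\|_{L^p(\RR)}^p \,dx = C_p^p\, \|F\|_{L^p(G)}^p \lesssim \|f\|_{L^p(G)}^p.
\]
For the weak type $(1,1)$ endpoint I would instead invoke the weak-$(1,1)$ bound for $H_{\geq 1}$ pointwise in $x$ and then integrate, so that
\[
\bigl|\{(x,u) \tc |(f*\tilde K_0)(x,u)| > \lambda\}\bigr| \leq \int_{\RR^n} \bigl|\{u \tc |H_{\geq 1} F(x,\cdot)(u)| > \lambda\}\bigr|\,dx \lesssim \frac{\|F\|_{L^1(G)}}{\lambda} \lesssim \frac{\|f\|_{L^1(G)}}{\lambda}.
\]

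I do not expect any genuine obstacle here: once the product structure of $\tilde K_0$ is translated into a factorisation of the convolution operator via the group law on $G$, the whole argument reduces to Young's inequality in $x$ together with the classical $L^p$ and weak-$(1,1)$ bounds for the truncated Hilbert transform in $u$. The cutoff $\chi_{\{|u|\geq 1\}}$ built into $\tilde K_0$ is precisely what removes the principal-value singularity of the Hilbert kernel and keeps $k$ inside standard one-dimensional Calder\'on--Zygmund theory, so the same factorisation handles the full range $p \in (1,\infty)$ and the weak-type endpoint uniformly, with no further nondoubling analysis on $G$ required.
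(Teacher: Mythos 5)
Your proposal is correct and follows essentially the same route as the paper's proof: you identify the tensor-product structure $\tilde K_0(x,u) = r_0(x)\,\chi_{\{|u|\geq 1\}}/u$, rewrite $f*\tilde K_0$ (via the change of variables from \eqref{eq:convolution}) as the composition of an $L^1$-normalised Euclidean convolution in $x$ with the rescaled kernel $(r_0)_{(e^v)}$ followed by the truncated Hilbert transform in $u$, and then invoke Young's inequality together with the classical $L^p$ and weak-$(1,1)$ bounds for the truncated Hilbert kernel on $\RR$. The paper phrases this as the factorisation $f*\tilde K_0 = ABf$ and cites the same standard facts; your Fubini-based integration of the fibre-wise weak-$(1,1)$ bound is exactly the ``one easily deduces'' step left implicit there.
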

\begin{proof}
By \eqref{eq:convolution},
\[\begin{split}
f * \tilde K_0(x,u) 
&= \int_G f(x-e^{u-u'} x',u-u') \, \tilde K_0(x',u') \,dx' \,du' \\ 
&= \int_\RR \int_{\RR^n} f(x-x',u-u') \, (r_0)_{(e^{u-u'})}(x') \,dx' \, \frac{\chi_{ \{|u'| \geq 1\} }}{u'} \,du' .
\end{split}\]
In other words, if we use the notation
\[
f_x(u) = f^u(x) = f(x,u),
\]
then $f * \tilde K_0 = A B f$, where
\[
(B f)^u = f^u *_{\RR^n} (r_0)_{(e^u)}, \qquad (A f)_x = f_x *_{\RR} k
\]
and $k(u) = \frac{\chi_{ \{|u| \geq 1\} }}{u}$.

Recall that on $G$ with coordinates $(x,u)$ we are using the Lebesgue measure $dx \,du$ as reference measure. As $r_0 \in L^1(\RR^n)$ and the scaling $F \mapsto F_{(\lambda)}$ preserves the $L^1$-norm, it is immediately seen that $B$ is $L^p(G)$-bounded for all $p \in [1,\infty]$.

On the other hand, $k$ is the truncation of a Calder\'on--Zygmund kernel on $\RR$, thus it is well known (see, e.g., \cite[Chapter I, Section 7.1]{St}) that the corresponding convolution operator is of weak type $(1,1)$ and $L^p(\RR)$-bounded for $p \in (1,\infty)$, whence one easily deduces that $A$ is of weak type $(1,1)$ and $L^p(G)$-bounded for $p \in (1,\infty)$.
\end{proof}

\begin{cor}\label{cor:full_reduction_nohardy}
Let $p \in (1,2]$. For the adjoint Riesz transform $\Rz_j^*$ with $j=0,\dots,n$, any of the following boundedness properties holds if and only if it holds for the convolution operator with kernel $K_j$:
\begin{itemize}[beginpenalty=10000,midpenalty=10000]
\item $L^p(G)$-boundedness;
\item weak type $(1,1)$.
\end{itemize}
\end{cor}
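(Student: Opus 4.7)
The claim is essentially a combination of the two immediately preceding results, so the proof should be very short; the plan is to treat the cases $j \geq 1$ and $j = 0$ separately, since the former is already contained in Corollary \ref{cor:full_reduction}.

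For $j = 1, \dots, n$, there is in fact nothing left to prove: Corollary \ref{cor:full_reduction} already establishes the equivalence between each of the two listed boundedness properties of $\Rz_j^*$ and the corresponding property of convolution by $K_j$ (the $H^1(G) \to L^1(G)$ statement there is simply discarded). So I would just invoke Corollary \ref{cor:full_reduction} verbatim in this case.

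For $j = 0$, the same Corollary \ref{cor:full_reduction} reduces each of the two boundedness properties of $\Rz_0^*$ to the same property for the convolution operator with kernel $\tilde K_0 + K_0$ rather than $K_0$ alone. To pass from $\tilde K_0 + K_0$ to $K_0$, I would use Proposition \ref{prp:tildeK0}, which guarantees that convolution by $\tilde K_0$ is of weak type $(1,1)$ and bounded on $L^p(G)$ for every $p \in (1,\infty)$, in particular for every $p \in (1,2]$. Hence writing
\[
\text{conv. by } K_0 = \text{conv. by } (\tilde K_0 + K_0) - \text{conv. by } \tilde K_0,
\]
both properties listed in the statement hold for the operator on the left if and only if they hold for the first operator on the right, i.e., if and only if they hold for $\Rz_0^*$.

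There is no real obstacle here, since both ingredients have been proved; the only point worth emphasising in writing up is why the $H^1 \to L^1$ boundedness cannot be propagated through this argument, namely that Proposition \ref{prp:tildeK0} does not address (and, in view of the negative results to be discussed in Section \ref{s:hardy}, cannot address) such an endpoint bound for convolution by $\tilde K_0$, which justifies excluding this property from the statement.
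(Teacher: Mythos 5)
Your proposal is correct and matches the paper's (implicit) argument exactly: the corollary is stated immediately after Proposition \ref{prp:tildeK0} precisely because the intended proof is to combine Corollary \ref{cor:full_reduction} with Proposition \ref{prp:tildeK0}, subtracting off the convolution by $\tilde K_0$ in the case $j=0$. Your additional remark on why the $H^1 \to L^1$ endpoint must be dropped is consistent with the paper's discussion in Section \ref{s:hardy}.
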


\begin{rem}
Formally, the kernels $K_j$ for $j=1,\dots,n$ have a very similar expression to that of $\tilde K_0$ in Proposition \ref{prp:kernel_reduction}. Indeed, by proceeding as in the proof of \ref{prp:tildeK0}, one could write $f * K_j = A_j B_j f$, where $(B_j f)^u = f^u *_{\RR^n} (r_j)_{(e^u)}$ is bounded on $L^p(G)$ for $p \in [1,\infty]$, as $r_j \in L^1(\RR^n)$. However, in this case $(A_j f)_x = f_x *_\RR h$, where the kernel $h(u) = \frac{\chi_{ \{u \leq -1\} }}{u}$ is noncancellative, and indeed the corresponding convolution operator is unbounded on any $L^p$ spaces. Thus, the fact that the composition $A_j B_j$ is bounded on $L^p(G)$ for $p \in (1,\infty)$ is due to a delicate interplay between the two components, and exploits in a fundamental way the cancellative nature of the kernel $r_j$. See also \cite[Remark at the end of Section 3]{GS}.
\end{rem}

\section{The Fourier multiplier approach}\label{s:multiplier}

In this section we complete the proof of Theorem \ref{thm:main}. As discussed, what we need to prove is the $L^p(G)$-boundedness for $p \in (1,2)$ of the adjoint Riesz transforms $\Rz_j^*$ and, by Corollary \ref{cor:full_reduction_nohardy}, this is reduced to the corresponding boundedness properties of the convolution operators with kernels $K_j$.

As it turns out, a convenient way to analyse the $L^p(G)$-boundedness properties of the convolution operators corresponding to the kernels $K_j$, $j=0,\dots,n$, is to write $L^p(G) = L^p(\RR^n; L^p(\RR))$ and to think of the aforementioned operators as operator-valued Fourier multiplier operators on $\RR^n$. Indeed, by definition,  left-invariant operators on $G$ commute, with left translations on $G$; as
\[
(x',0) \cdot (x,u) = (x'+x,u),
\]
this shows in particular that left-invariant operators on $G$ commute with standard translations in the variable $x \in \RR^n$.

More explicitly, for a kernel $K$ on $G$, we can write, at least formally, by \eqref{eq:convolution},
\begin{equation}\label{eq:conv_kernel}
\begin{split}
f * K(x,u) &= \int_G f(x-e^{u-u'} x',u-u') K(x',u') \,dx' \,du' \\
&= \int_G f(x-x',u-u') K_{(e^{u-u'})}(x',u') \,dx' \,du',
\end{split}
\end{equation}
where we are using the notation $K_{(\lambda)}(x,u) = \lambda^{-n} K(\lambda^{-1} x,u)$. So, if $\Four$ denotes the partial Fourier transform in the variable $x \in \RR^n$, then
\[\begin{split}
\Four(f * K)(\xi,u) &= \int_{\RR} (\Four K)(e^{u-u'} \xi,u') (\Four f)(\xi,u-u') \,du' \\
&= [M_K(\xi) (\Four f)(\xi,\cdot)](u),
\end{split}\]
where, for any $\xi \in \RR^n \setminus \{0\}$, $M_K(\xi)$ is the integral operator on $\RR$ given by
\begin{equation}\label{eq:mult_kernel}
[M_K(\xi) \phi ](u) = \int_\RR H_K^{\xi}(u,u') \phi(u') \,du', \quad H_K^{\xi}(u,u') = (\Four K)(e^{u'} \xi,u-u').
\end{equation}

In other words, the partial Fourier transform $\Four$ intertwines the operator $f \mapsto f * K$ with the operator-valued multiplier $M_K$. Thus, in order to prove the $L^p(G)$-boundedness for $p \in (1,\infty)$ of $f \mapsto f * K$, we can make use of a particular instance of an operator-valued Fourier multiplier theorem proved in \cite{SW}. In what follows, $A_2(\RR)$ denotes the Muckenhoupt class of $A_2$ weights on $\RR$, and $[w]_{A_2}$ denotes the $A_2$-characteristic of a weight $w \in A_2(\RR)$.

\begin{prp}\label{prp:opval_fourier_multiplier}
Assume that there exists a nondecreasing function $\psi : [1,\infty) \to [0,\infty)$ such that, for all $w \in A_2(\RR)$, $\alpha \in \{0,1\}^n$, $\xi \in (\RR^*)^n$,
\begin{equation}\label{eq:wbd}
\|\xi^\alpha \partial_\xi^\alpha M_K(\xi)\|_{L^2(w) \to L^2(w)} \leq \psi([w]_{A_2}).
\end{equation}
Then, the operator $f \mapsto f * K$ is bounded on $L^p(G)$ for all $p \in (1,\infty)$.
\end{prp}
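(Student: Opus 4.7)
The plan is to deduce this proposition from an operator-valued Mihlin--H\"ormander multiplier theorem on $\RR^n$ (as formulated, e.g., in \cite{HvNVW2,SW}), applied to the symbol $M_K$ with target space $L^p(\RR)$. First, I would identify $L^p(G)$ with the Bochner space $L^p(\RR^n; L^p(\RR))$ via Fubini, using that the Haar measure on $G$ is the Lebesgue measure $dx \, du$. The computation leading to \eqref{eq:mult_kernel} shows that, under the partial Fourier transform in the $x$ variable, the convolution operator $f \mapsto f * K$ is conjugated to the Fourier multiplier operator on $\RR^n$ with operator-valued symbol $\xi \mapsto M_K(\xi)$. Consequently, the task reduces to showing that this Fourier multiplier operator is bounded on $L^p(\RR^n; L^p(\RR))$.

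Since $L^p(\RR)$ is a UMD space for every $p \in (1,\infty)$, the operator-valued Mihlin multiplier theorem applies, and it reduces the desired boundedness to the $R$-boundedness on $L^p(\RR)$ of each of the families $\{\xi^\alpha \partial_\xi^\alpha M_K(\xi) : \xi \in (\RR^*)^n\}$ for $\alpha \in \{0,1\}^n$. So the problem becomes that of extracting these $R$-boundedness properties from the weighted $L^2$ hypothesis \eqref{eq:wbd}.

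For this, I would invoke the well-known principle that uniform $A_2(\RR)$-weighted $L^2$ bounds, with constant depending only on the $A_2$-characteristic, imply $R$-boundedness on $L^p(\RR)$ for every $p \in (1,\infty)$. Concretely, Rubio de Francia's extrapolation theorem upgrades the hypothesis \eqref{eq:wbd} to uniform $L^p(w)$ bounds for every $w \in A_p(\RR)$, with a bound depending only on $[w]_{A_p}$; the vector-valued version of extrapolation then yields the $\ell^2$-valued $L^p$ estimate for each of these families, which by Khintchine's inequality is equivalent to $R$-boundedness on $L^p(\RR)$.

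The main obstacle I expect is precisely this extrapolation-to-$R$-boundedness step, which genuinely exploits the quantitative dependence on $[w]_{A_2}$ encoded in \eqref{eq:wbd} rather than merely qualitative $A_2$-weighted bounds. This dependence is however by now a well-understood ingredient of quantitative weighted theory and can be invoked as a black box from the cited references. Combining it with the operator-valued Mihlin theorem then yields the statement.
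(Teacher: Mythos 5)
Your proposal follows essentially the same route as the paper: reduce via the partial Fourier transform to an operator-valued Mihlin multiplier theorem on the UMD space $L^p(\RR)$, and derive the required $R$-boundedness of $\{\xi^\alpha \partial_\xi^\alpha M_K(\xi)\}$ from the uniform $A_2$-weighted $L^2$ bounds. The paper packages the extrapolation-plus-Khintchine step you spell out into a single reference to \cite[Theorem 8.2.6]{HvNVW2}, and additionally records that $L^p(\RR)$ has Pisier's contraction property (a hypothesis of the cited multiplier theorem which you should mention), but the argument is the same.
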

\begin{proof}
Let $p \in (1,\infty)$. According to \cite[Proposition 7.5.3]{HvNVW2}, $L^p(\RR)$ is an UMD space with Pisier's contraction property. Therefore, by \cite[Corollary 8.3.22]{HvNVW2}, to deduce the $L^p(G)$-boundedness of $f \mapsto f * K$ it is enough to check that
\[
\{ \xi^\alpha \partial_\xi^\alpha M_K(\xi) \tc \xi \in (\RR^*)^n, \, \alpha \in \{0,1\}^n \}
\]
is R-bounded as a family of operators on $L^p(\RR)$. The required R-boundedness property in turn follows from the assumed uniform $L^2(w)$-bound \eqref{eq:wbd} by \cite[Theorem 8.2.6]{HvNVW2}.
\end{proof}

\begin{rem}
The operator-valued symbol $\xi \mapsto M_K(\xi)$ admits an interpretation in terms of the group Fourier transform of the kernel $K$ on $G$, defined in terms of unitary representation theory; see Remark \ref{rem:fourier_multiplier} below for details. In this sense, the above Proposition \ref{prp:opval_fourier_multiplier} can be thought of as an $L^p$ Fourier multiplier theorem for the group Fourier transform on $G$.
\end{rem}

In order to prove Theorem \ref{thm:main}, we intend to apply Proposition \ref{prp:opval_fourier_multiplier} to the case where $K$ is one of the kernels $K_0,\dots,K_n$. Thus, we need to study the homogeneous $\xi$-derivatives of the Fourier multipliers $M_{K_j}(\xi)$ and the corresponding integral kernels $H_{K_j}^\xi$.

\begin{prp}\label{prp:fmf}
Let $K_0,\dots,K_n$ bet the kernels given in Proposition \ref{prp:kernel_reduction}. Then, for all $\alpha \in \NN^n$, $\xi \in \RR^n \setminus \{0\}$ and $u,u' \in \RR$,
For all $\alpha \in \NN^n$, $j=1,\dots,n$, we have
\begin{align*}
\xi^\alpha \partial_\xi^\alpha H_{K_0}^\xi(u,u') &= \left[ S_{\alpha,0}(e^{u} \xi) - S_{\alpha,0}(e^{u'} \xi) \right] \frac{\chi_{\{u\geq u'+1\}}}{u-u'} , \\
\xi^\alpha \partial_\xi^\alpha H_{K_j}^\xi(u,u') &= S_{\alpha,j}(e^{u'} \xi) \frac{\chi_{\{u\leq u'-1\}}}{u-u'},
\end{align*}
where, for some $\epsilon>0$, the functions $S_{\alpha,0}$ and $S_{\alpha,j}$ satisfy
\begin{align*}
|S_{\alpha,j}(\xi)| &\lesssim_{\alpha} \min\{|\xi|^\epsilon,|\xi|^{-\epsilon}\}, \\
|S_{\alpha,0}(\xi)| &\lesssim_{\alpha} (1+|\xi|)^{-\epsilon}
\end{align*}
for all $\xi \in \RR^n \setminus \{0\}$, as well as
\[
|S_{\alpha,0}(\xi)-S_{\alpha,0}(\xi')| \lesssim_{\alpha} |\xi-\xi'|^{\epsilon} \qquad \text{whenever } |\xi|,|\xi'| \leq 1.
\]
\end{prp}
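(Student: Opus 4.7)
The plan is to compute the partial Fourier transforms of $K_0$ and $K_j$ in closed form, then use the scale-covariance of the operator $\xi^\alpha \partial_\xi^\alpha$ to identify the functions $S_{\alpha,\cdot}$, and finally verify the claimed bounds via the explicit Bessel-type formula for $\Four(r_0)$.

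First, since the expressions for $K_0$ and $K_j$ in Proposition \ref{prp:kernel_reduction} factor cleanly in the $x$- and $u$-variables, the partial Fourier transform in $x$ is straightforward. Using the standard identity $\Four[(r_0)_{(\lambda)}](\xi) = \Four(r_0)(\lambda\xi)$ one obtains
\[
(\Four K_j)(\xi,u) = \Four(r_j)(\xi) \frac{\chi_{\{u \leq -1\}}}{u}, \quad
(\Four K_0)(\xi,u) = [\Four(r_0)(e^u\xi) - \Four(r_0)(\xi)] \frac{\chi_{\{u \geq 1\}}}{u}.
\]
Substituting into $H_K^\xi(u,u') = (\Four K)(e^{u'}\xi,u-u')$ gives the displayed kernels, with the indicator regions $\{u \leq u'-1\}$ and $\{u \geq u'+1\}$ and denominator $u-u'$ appearing automatically.

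Next, I would exploit the elementary identity $\xi^\alpha \partial_\xi^\alpha [f(\lambda\xi)] = (\lambda\xi)^\alpha (\partial^\alpha f)(\lambda\xi)$ valid for any $\lambda>0$. Applied with $\lambda=e^{u'}$ (and $\lambda=e^u$ for the first summand of $H_{K_0}^\xi$), this preserves the $(u,u')$-factor and produces the two stated formulas, with
\[
S_{\alpha,0}(\eta) \defeq \eta^\alpha (\partial^\alpha \Four(r_0))(\eta), \qquad S_{\alpha,j}(\eta) \defeq \eta^\alpha (\partial^\alpha \Four(r_j))(\eta)
\]
for $j=1,\dots,n$. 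Note that $\Four(r_j) = -i\,\partial_{\xi_j} \Four(r_0)$ since $r_j(x)=x_j r_0(x)$, so everything reduces to the analysis of $\Four(r_0)$.

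The remaining task, and the core of the proof, is the verification of the pointwise bounds on $S_{\alpha,0},S_{\alpha,j}$. Here I would invoke the explicit form $\Four(r_0)(\xi) = c_n |\xi| K_1(|\xi|)$, coming from standard Bessel-potential theory applied to $(1+|x|^2)^{-1-n/2}$, where $K_1$ is the modified Bessel function of the second kind. At infinity, $K_1(t) \sim \sqrt{\pi/(2t)}\,e^{-t}$ together with corresponding asymptotics for its derivatives yields exponential decay of $\Four(r_0)$ and all its derivatives, which trivially implies the required $(1+|\xi|)^{-\epsilon}$ and $|\xi|^{-\epsilon}$ bounds for $|\xi|$ large (with exponential slack). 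Near the origin, the standard expansion of $tK_1(t)$ takes the form $c + c' t^2 \log t + O(t^2)$, and the key point is that $\xi^\alpha \partial^\alpha$ acting on such an expansion produces terms of the form $O(|\xi|^{2-\delta})$ (for $|\alpha|\geq 1$) or $O(1)$ (for $\alpha=0$), because every factor of $\xi^{\alpha_k}$ is spent compensating either a derivative of a power or a derivative of a logarithmic correction. The cancellation $\Four(r_j)(0)=0$, coming from the oddness of $r_j$, sharpens the estimate for $j\geq 1$ to $|S_{\alpha,j}(\xi)| \lesssim |\xi|^\epsilon$. Finally, the Hölder estimate for $S_{\alpha,0}$ on the unit ball reduces to the well-known Hölder regularity of functions of the form $|\xi|^{2}\log|\xi|$.

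The main obstacle is the careful bookkeeping of logarithmic singularities arising from higher derivatives of $|\xi|K_1(|\xi|)$ at the origin, and confirming that the prefactor $\xi^\alpha$ always absorbs them for arbitrary multi-indices $\alpha$; this is essentially a routine but delicate computation once the Bessel expansion is fixed.
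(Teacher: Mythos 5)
Your derivation of the formulas for $\xi^\alpha\partial_\xi^\alpha H_{K_j}^\xi$ and the identification $S_{\alpha,j}(\eta) = \eta^\alpha\partial^\alpha\Four(r_j)(\eta)$ match the paper exactly (the paper records this as $S_{\alpha,j} = \hat r_{\alpha,j}$ with $r_{\alpha,j} = (-1)^{|\alpha|}\partial_x^\alpha(x^\alpha r_j)$, which is the same thing on the spatial side). Where you diverge is in how the decay and H\"older bounds are established: you invoke the explicit Bessel-kernel identity $\Four(r_0)(\xi) = c_n|\xi|K_1(|\xi|)$ and argue from the asymptotics of $K_1$ at $0$ and $\infty$, with the bookkeeping observation that $\xi^\alpha\partial^\alpha$ applied to terms of the form $|\xi|^2\log|\xi|\cdot A(|\xi|^2)$ is always $o(1)$ near the origin. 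The paper instead works on the spatial side: it observes that $r_0\in S^{-(n+2)}(\RR^n)$ and $r_j\in S^{-(n+1)}(\RR^n)$, that the operation $r\mapsto(-1)^{|\alpha|}\partial_x^\alpha(x^\alpha r)$ preserves these symbol classes (and preserves oddness in $x_j$), and then invokes a single abstract lemma (Lemma~\ref{lem:symb_ft}) built on the Besov embedding $B_{2,\infty}^{\delta+n/2}\hookrightarrow B_{\infty,\infty}^\delta$ to get rapid decay and $\delta$-H\"older continuity of $\hat r$ uniformly in $\alpha$. Both routes are correct. Yours has the advantage of being entirely explicit and self-contained (no Besov machinery), but it leans on the special closed-form Fourier transform available for $(1+|x|^2)^{-1-n/2}$ and requires you to check that the logarithmic corrections are tamed by the $\xi^\alpha$ prefactor for every multi-index $\alpha$, which is the delicate step you flagged; the paper's symbol-class argument handles all $\alpha$ at once and would extend verbatim to any kernel with the same symbol-type bounds, at the cost of importing the Besov characterisations from H\"ormander and Bergh--L\"ofstr\"om.
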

\begin{proof}
From \eqref{eq:mult_kernel} and Proposition \ref{prp:kernel_reduction}, we deduce that, for $j=0,\dots,n$ and $\alpha \in \NN^n$, the above formulas for the $\xi^\alpha \partial_\xi^\alpha H_{K_j}^\xi$ are satisfied if we set
\[
S_{\alpha,j}(\xi) = \xi^\alpha \partial_\xi^\alpha \hat r_j(\xi) = \hat r_{\alpha,j}(\xi), \qquad r_{\alpha,j}(x) = (-1)^{|\alpha|} \partial_x^\alpha (x^\alpha r_j(x)),
\]
where $\hat f$ denotes the Fourier transform of $f$.
From the expression for $r_0$ in Proposition \ref{prp:kernel_reduction}, we immediately see that $r_0$ is in the H\"ormander symbol class $S^{-(n+2)}(\RR^n)$, therefore so is $r_{\alpha,0}$ for all $\alpha \in \NN^n$. Similarly, for $j=1,\dots,n$, we see that $r_j$ is in the H\"ormander symbol class $S^{-(n+1)}(\RR^n)$ and is an odd function in $x_j$, therefore so is $r_{\alpha,j}$ for all $\alpha \in \NN^n$. The desired properties of the functions $S_{\alpha,j}$, for $j=0,\dots,n$, are then deduced by applying Lemma \ref{lem:symb_ft} below with $r = r_{\alpha,j}$, and taking into account that, for $j>0$, we have $S_{\alpha,j}(0) = 0$ due to the mentioned parity property.
\end{proof}

\begin{lem}\label{lem:symb_ft}
Let $\delta \in (0,1)$. If $r$ is in the symbol class $S^{-(n+\delta)}(\RR^n)$, then its Fourier transform $\hat r$ is $\delta$-H\"older continuous on $\RR^n$ and $\sup_{\xi \in \RR^n} (1+|\xi|)^N |\hat r(\xi)| < \infty$ for all $N \in \NN$.
\end{lem}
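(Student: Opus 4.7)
My plan is to deduce both assertions directly from the symbol estimate $|\partial^\alpha r(x)| \lesssim_\alpha (1+|x|)^{-(n+\delta)-|\alpha|}$, which ensures that every derivative of $r$ lies in $L^1(\RR^n)$ because $\delta>0$.

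For the polynomial decay, I would invoke the standard identity $\xi^\alpha \hat r(\xi) = (-i)^{|\alpha|}\widehat{\partial^\alpha r}(\xi)$. Since $\partial^\alpha r \in L^1(\RR^n)$, its Fourier transform is uniformly bounded, whence $|\xi^\alpha \hat r(\xi)| \lesssim_\alpha 1$ for every $\alpha \in \NN^n$; together with the trivial bound $\|\hat r\|_{L^\infty} \leq \|r\|_{L^1}$ this gives $(1+|\xi|)^N |\hat r(\xi)| \lesssim_N 1$ for every $N \in \NN$.

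For the $\delta$-H\"older continuity, set $h = \xi-\xi'$. When $|h|\geq 1$, the bound $|\hat r(\xi) - \hat r(\xi')| \lesssim |h|^\delta$ is immediate from the just-obtained boundedness of $\hat r$, so I assume $|h|<1$ and write
\begin{equation*}
\hat r(\xi)-\hat r(\xi') = \int_{\RR^n}\bigl(e^{-ix\cdot\xi}-e^{-ix\cdot\xi'}\bigr)\, r(x)\,dx,
\end{equation*}
splitting the domain at the scale $|x|=1/|h|$. On the inner region $\{|x|\leq 1/|h|\}$ I would apply $|e^{-ix\cdot\xi}-e^{-ix\cdot\xi'}| \leq |x||h|$; in polar coordinates the resulting bound is
\begin{equation*}
|h|\int_0^{1/|h|}(1+r)^{-(n+\delta)}\, r^n\,dr,
\end{equation*}
whose dominant contribution is the range $1\leq r\leq 1/|h|$, giving $|h|\cdot |h|^{\delta-1}/(1-\delta) \simeq |h|^\delta$. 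On the outer region $\{|x|>1/|h|\}$ I would use the trivial bound $|e^{-ix\cdot\xi}-e^{-ix\cdot\xi'}|\leq 2$, producing $\int_{|x|>1/|h|}(1+|x|)^{-(n+\delta)}\,dx \simeq |h|^\delta$. Summing the two contributions completes the argument.

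I do not foresee any real obstacle: both parts reduce to routine $L^1$ Fourier-integral computations, the crucial input being the integrability of $r$ together with its derivatives that $\delta>0$ provides. The only point deserving care is the choice of the splitting radius $1/|h|$, which is precisely the value that balances the pointwise bound $2$ against the Lipschitz bound $|x||h|$ on the oscillatory factor and turns the integrability margin $\delta$ into the H\"older exponent on the Fourier side.
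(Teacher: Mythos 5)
Your argument for the rapid decay is essentially the same as the paper's: write $\xi^\alpha \hat r(\xi)$ as the Fourier transform of $\partial^\alpha r$, which lies in $L^1(\RR^n)$ because $\delta>0$, and conclude boundedness.

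For the $\delta$-H\"older continuity, your argument is different and more elementary. The paper invokes Besov-space machinery: from $r \in S^{-(n+\delta)}(\RR^n)$ it deduces $\hat r \in B^{\delta+n/2}_{2,\infty}(\RR^n)$ (via H\"ormander's characterisation of weighted $L^2$-Sobolev spaces), then uses the Besov embedding $B^{\delta+n/2}_{2,\infty} \hookrightarrow B^{\delta}_{\infty,\infty}$ and the identification of $B^{\delta}_{\infty,\infty}$ with the H\"older class. Your approach instead splits the difference $\hat r(\xi) - \hat r(\xi')$ at the radius $|x| = 1/|h|$ and balances the oscillation bound $|e^{-ix\cdot\xi}-e^{-ix\cdot\xi'}| \leq \min\{2, |x||h|\}$ against the decay $|r(x)| \lesssim (1+|x|)^{-(n+\delta)}$; both regions then contribute $O(|h|^\delta)$. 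This is a standard and entirely correct direct computation (note that it only uses the pointwise bound on $r$ itself, not on its derivatives), and it buys self-containedness at the modest cost of a short explicit estimate; the paper's route is shorter to \emph{state} but leans on external references. Either is a valid proof.
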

\begin{proof}
As $r \in S^{-(n+\delta)}(\RR^n)$ and $\delta >0$, we know that $r$ is integrable on $\RR^n$, thus $\hat r$ is continuous and bounded. Since $(i\xi)^\alpha \hat r(\xi)$ is the Fourier transform of $\partial_x^\alpha r(x)$, which is in $S^{-(n+\delta+|\alpha|)}(\RR^n)$, we deduce that $\xi^\alpha \hat r(\xi)$ is continuous and bounded too for any $\alpha \in \NN^n$, whence the rapid decay of $\hat r$ at infinity.

Now, from the fact that $r \in S^{-(n+\delta)}(\RR^n)$ and from \cite[Proposition 18.2.2 and eq.\ (18.2.7)']{H3} we deduce that $\hat r \in B_{2,\infty}^{\delta+n/2}(\RR^n)$, thus also $\hat r \in B_{\infty,\infty}^{\delta}(\RR^n)$ by \cite[Theorem 6.5.1]{BL}; here $B_{p,q}^s(\RR^n)$ denotes the Besov space on $\RR^n$ of indices $p,q$ and order $s$, as defined in \cite[Definition 6.2.2]{BL}, and we remark that the notation ${}^\infty H_{(s)}$ used in \cite{H3} corresponds to $B_{2,\infty}^s$ here. From the characterisation of $B_{\infty,\infty}^{\delta}(\RR^n)$ given in \cite[Theorem 6.2.5]{BL}, we deduce that $\hat r$ is $\delta$-H\"older continuous, as required.
\end{proof}

In light of the expressions in Proposition \ref{prp:fmf}, the following result implies that the kernels $K_0,K_1,\dots,K_n$ satisfy the assumptions of Proposition \ref{prp:opval_fourier_multiplier}.

\begin{prp}\label{prp:A2bound}
Let $T$ be an integral operator on $\RR$. Assume that either of the following conditions is satisfied.
\begin{enumerate}[label=(\roman*)]
\item\label{en:k_ass1} The integral kernel of $T$ is of the form
\[
(u,u') \mapsto [S(e^u \xi) - S(e^{u'} \xi)] \frac{\chi_{\{u\geq u'+1\}}}{u-u'}
\]
for some $\xi \in \RR^n \setminus \{0\}$ and $S : \RR^n \setminus \{0\} \to \CC$ satisfying
\begin{gather*}
|S(\xi)| \leq (1+|\xi|)^{-\epsilon} \text{ for all } \xi \in \RR^n \setminus \{0\},\\
|S(\xi) - S(\xi')| \leq |\xi-\xi'|^\epsilon \text{ whenever } |\xi|,|\xi'| \leq 1,
\end{gather*}
for some $\epsilon > 0$.

\item\label{en:k_ass2} The integral kernel of $T$ is of the form
\[
(u,u') \mapsto S(e^{u'} \xi) \frac{\chi_{\{u\leq u'-1\}}}{u-u'}
\]
for some $\xi \in \RR^n \setminus \{0\}$ and $S : \RR^n \setminus \{0\} \to \CC$ satisfying
\begin{gather*}
|S(\xi)| \leq \min\{|\xi|,|\xi|^{-1}\}^\epsilon \text{ for all } \xi \in \RR^n \setminus \{0\},
\end{gather*}
for some $\epsilon > 0$.
\end{enumerate}
Then, for any $w \in A_2(\RR)$, the operator $T$ is bounded on $L^2(w)$, with a bound only depending
on $\epsilon$ and $[w]_{A_2}$.
\end{prp}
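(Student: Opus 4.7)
By translation invariance of the $A_2(\RR)$ characteristic under $u\mapsto u-\log|\xi|$, we may reduce to the case $|\xi|=1$. Set $s(u) := S(e^u\xi)$, so the hypotheses become $|s(u)|\leq(1+e^u)^{-\epsilon}$ globally and $|s(u)-s(u')|\leq|e^u-e^{u'}|^\epsilon$ for $u,u'\leq 0$ in case (i), and $|s(u)|\leq e^{-\epsilon|u|}$ in case (ii). The plan is to relate $T$ to the truncated Hilbert transform $H_1\phi(u):=\int\chi_{\{|u-u'|\geq 1\}}(u-u')^{-1}\phi(u')\,du'$, which is $L^2(w)$-bounded for $w\in A_2(\RR)$ by the classical $A_2$ theorem, together with its one-sided pieces $H_\pm$ satisfying $H_1=H_++H_-$. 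Although $H_\pm$ are individually unbounded on $L^2$, their compositions with multiplication by the bounded or exponentially decaying function $s$ will be bounded.

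\textit{Case (ii).} One has $T\phi=-H_-(s\phi)$ with $H_-\phi(u):=\int_{u+1}^\infty\phi(u')/(u'-u)\,du'$. Decompose $s=\sum_{k\in\ZZ}s_k$ with $s_k:=s\chi_{[k,k+1)}$, so $\|s_k\|_\infty\lesssim e^{-\epsilon|k|}$, and set $T_k\phi:=-H_-(s_k\phi)$. The kernel of $T_k$ is supported in $\{(u,u'): u\leq k,\,u'\in[k,k+1)\}$. A pointwise Cauchy--Schwarz bound in $u'$, combined with the standard $A_2$-weighted estimate
\[
\int_{-\infty}^{u'-1}\frac{w(u)}{(u'-u)^2}\,du \cdot \int_{[k,k+1)} w(u')^{-1}\,du'\lesssim\psi_1([w]_{A_2})
\]
(obtained by dyadic splitting of $(-\infty,u'-1]$ and the $A_2$ relation $\langle w\rangle_J\langle w^{-1}\rangle_J\leq[w]_{A_2}$), yields $\|T_k\phi\|_{L^2(w)}\lesssim e^{-\epsilon|k|}\sqrt{\psi_1([w]_{A_2})}\,\|\phi\chi_{[k,k+1)}\|_{L^2(w)}$. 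Summing in $k$ via Cauchy--Schwarz and the orthogonality $\sum_k\|\phi\chi_{[k,k+1)}\|^2_{L^2(w)}=\|\phi\|^2_{L^2(w)}$ completes case (ii).

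\textit{Case (i).} Here $T=[M_s,H_+]$ with $H_+\phi(u):=\int_{-\infty}^{u-1}\phi(u')/(u-u')\,du'$, and a direct kernel computation shows $[M_s,H_1]=T+T^*$. Since $\|s\|_\infty\leq 1$, the commutator $[M_s,H_1]$ is $L^2(w)$-bounded with norm $\lesssim\psi_0([w]_{A_2})$; however, this bound on $T+T^*$ does not by itself yield a bound on $T$. To obtain one, we split $T$ according to the signs of $u,u'$: on $\{u,u'\leq 0\}$, the H\"older hypothesis gives $|s(u)-s(u')|\leq|e^u-e^{u'}|^\epsilon\leq e^{\epsilon u}$ (using $u\geq u'+1$), and the resulting factor of $e^{\epsilon u}\chi_{\{u\leq 0\}}$ allows a dyadic decomposition in $u$ entirely analogous to case (ii); the remaining pieces involving $u$ or $u'\geq 0$ exploit the decay $|s|\lesssim e^{-\epsilon u}$ at $+\infty$ and are treated in the same way.

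The main technical obstacle is the lack of Calder\'on--Zygmund smoothness of the kernels (no regularity is assumed on $S$ beyond boundedness and H\"older continuity at the origin), which precludes direct application of standard weighted CZ theory. The proof instead exploits the explicit commutator/one-sided structure of the kernels to reduce to dyadic sums where the exponential decay or H\"older regularity from the hypothesis on $S$ combines with local $A_2$-weighted Poisson-type estimates to yield the desired bound $\psi([w]_{A_2})$.
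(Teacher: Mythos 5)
Your case (ii) argument is sound and is essentially a dyadically discretised version of what the paper does for the kernels $Z_\epsilon, Z_\epsilon^*$ in Lemma \ref{lem:simple_boundedness}: Cauchy--Schwarz in $u'$ plus the doubling property of $A_2$ weights, with the exponential factor giving convergence of the sum over unit intervals. Your handling of case (i) on $\{u,u'\leq 0\}$ and on $\{u,u'\geq 0\}$ is likewise fine for the same reasons.

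The gap is the region $\{u\geq 0,\ u'\leq 0\}$. There, $|s(u)|\lesssim e^{-\epsilon u}$, but $|s(u')|\simeq 1$ for $u'\leq 0$, so the piece coming from $-s(u')$ in the difference $s(u)-s(u')$ carries no exponential decay whatsoever: it is bounded only by $\chi_{\{u\geq 0,\,u'\leq 0,\,u-u'\geq 1\}}/(u-u')$. Applying your unit-interval decomposition in $u'$ to this piece and tracking the constants gives a bound like $k^{(p-2)/2}$ per block (with $p<2$ the self-improved $A_p$ exponent of $w$), and $\sum_k k^{p-2}$ diverges when $p$ is close to $2$; so the naive ``treated in the same way'' claim fails here. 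The paper handles this region differently: it observes that on $\{u\geq 0,\,u'\leq 0\}$ one has $|u-u'|=|u|+|u'|\geq\sqrt{u^2+(u')^2}$, so the kernel there is dominated by $W(u,u')=(u^2+(u')^2)^{-1/2}$, which is a genuine Calder\'on--Zygmund kernel on $\RR$; $L^2(\RR)$-boundedness of $W$ comes from Schur's test, and then the weighted $L^2$ bound follows from classical weighted CZ theory (this is the content of the first part of Lemma \ref{lem:simple_boundedness}). That is the ingredient missing from your sketch; the ``Poisson-type'' estimate you invoke is the one used for the exponentially decaying pieces and does not cover this term.

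One remark: the commutator observation you made and then set aside is in fact exactly what rescues this region. On $\{u\geq 0,\,u'\leq 0\}$ the indicator $\chi_{\{u\geq u'+1\}}$ coincides with $\chi_{\{|u-u'|\geq 1\}}$, so
\[
\chi_{\{u\geq 0\}}\, T\, \chi_{\{u\leq 0\}} \;=\; \chi_{\{u\geq 0\}}\,[M_s,H_1]\,\chi_{\{u\leq 0\}},
\]
and the right-hand side is trivially $L^2(w)$-bounded with norm controlled by $\|s\|_\infty\,\|H_1\|_{L^2(w)\to L^2(w)}\lesssim\psi([w]_{A_2})$. So the $[M_s,H_1]$ bound you discarded as ``only giving $T+T^*$'' does give the needed control precisely where the dyadic decay argument breaks down. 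Incorporating either that observation or the paper's $W$-kernel argument closes the gap.
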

\begin{proof}
Set either
\[
H_\xi(u,u') = [S(e^u \xi) - S(e^{u'} \xi)] \frac{\chi_{\{u\geq u'+1\}}}{u-u'}
\]
or
\[
H_\xi(u,u') = S(e^{u'} \xi) \frac{\chi_{\{u\leq u'-1\}}}{u-u'},
\]
according to which assumption is satisfied. In either case, we have
\[
H_{\xi}(u,u') = H_{\xi/|\xi|}(u+\log|\xi|,u'+\log|\xi|).
\]
As the class $A_2(\RR)$ and the  $A_2$-characteristic are translation-invariant, we see that it is enough to prove the desired result with $H_{\xi}$ replaced by $H_{\xi/|\xi|}$, i.e., we may assume that $|\xi| = 1$ without loss of generality.

Now, if assumption \ref{en:k_ass2} is satisfied, then
\begin{equation}\label{eq:est_ker_2}
|H_\xi(u,u')| \leq e^{-\epsilon |u'|} \frac{\chi_{\{|u-u'|\geq 1\}}}{|u-u'|}.
\end{equation}
If instead assumption \ref{en:k_ass1} is satisfied, then we obtain different estimates according to the positions of $u,u'$. Namely, if $u \leq 0$, then we also have $u' \leq u-1 \leq -1$ on the support of $H_\xi$; thus both $|e^u \xi|,|e^{u'}\xi| \leq 1$ and
\[
|S(e^u \xi) - S(e^{u'} \xi)| \leq |e^u \xi - e^{u'}\xi|^\epsilon =  (e^u - e^{u'})^{\epsilon} \leq e^{-\epsilon|u|}.
\]
Suppose instead that $u \geq 0$ and $u' \geq 0$; then 
\[
|S(e^u \xi) - S(e^{u'} \xi)| \leq |e^u \xi|^{-\epsilon} + |e^{u'} \xi|^{-\epsilon} = e^{-\epsilon|u|} + e^{-\epsilon|u'|}
\]
Finally, if $u \geq 0$ and $u' \leq 0$, then
\[
|S(e^u \xi) - S(e^{u'} \xi)| \leq 2,
\]
but, at the same time, $|u-u'| = |u|+|u'| \geq \sqrt{u^2 + (u')^2}$. Therefore, by combining the previous estimates, we conclude that
\begin{equation}\label{eq:est_ker_1}
|H_\xi(u,u')| \leq (e^{-\epsilon|u|} + e^{-\epsilon|u'|}) \frac{\chi_{\{|u-u'| \geq 1\}}}{|u-u'|} +  \frac{2}{\sqrt{u^2 + (u')^2}}.
\end{equation}

In light of the estimates \eqref{eq:est_ker_2} and \eqref{eq:est_ker_1}, the desired bound for $T$ follows from Lemma \ref{lem:simple_boundedness} below.
\end{proof}

\begin{lem}\label{lem:simple_boundedness}
Let $\epsilon > 0$. Each of the nonnegative kernels
\begin{align*}
W(u,u') &= \frac{1}{\sqrt{u^2 + (u')^2}}, \\
Z_\epsilon(u,u') &= e^{-\epsilon|u|} \frac{\chi_{\{|u-u'| \geq 1\}}}{|u-u'|}, \\
Z_\epsilon^*(u,u') &= e^{-\epsilon|u'|} \frac{\chi_{\{|u-u'| \geq 1\}}}{|u-u'|}
\end{align*}
defines an integral operator on $\RR$ which is bounded on $L^2(w)$ for all $w \in A_2(\RR)$, with bound depending only on $\epsilon$ and $[w]_{A_2}$.
\end{lem}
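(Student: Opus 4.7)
I would handle each of the three kernels separately, reducing the weighted $L^2$-estimate to a classical $A_2$-weighted inequality (for the Hardy--Littlewood maximal function, the Stieltjes transform, or the truncated Hilbert transform), all of which hold with constants depending only on $[w]_{A_2}$.

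For $W$, I would first observe the two-sided comparison $\sqrt{u^2+(u')^2}\simeq |u|+|u'|$, so it suffices to bound the operator with kernel $(|u|+|u'|)^{-1}$. Splitting $\RR\times \RR$ into the four sign-sectors and using the symmetry $W(-u,-u')=W(u,u')$ reduces matters to the positive Stieltjes transform $Sf(u)=\int_0^\infty f(u')/(u+u')\,du'$ on $(0,\infty)$, plus reflections. The $L^2(w)$-boundedness of $S$ for $A_2$ weights is classical: one may prove it via the integral representation $1/(u+u')=\int_0^\infty e^{-(u+u')t}\,dt$ combined with weighted Hardy inequalities, or via a weighted Schur test with the power Schur function $\phi(u)=u^{-1/2}$, in which the $A_2$ condition controls the relevant integrals of $w$ against $\phi(u')/(|u|+|u'|)$.

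For $Z_\epsilon$ and $Z_\epsilon^*$, since the two kernels satisfy $Z_\epsilon^*(u,u')=Z_\epsilon(u',u)$ and the map $w\mapsto w^{-1}$ preserves $A_2$ with the same constant, by duality between $L^2(w)$ and $L^2(w^{-1})$ it suffices to bound $Z_\epsilon$. I would write $Z_\epsilon=M_{e^{-\epsilon|\cdot|}}\circ T_0$, where $T_0 f = K_0 * f$ and $K_0(v)=\chi_{\{|v|\geq 1\}}/|v|$, and then dyadically decompose $K_0=\sum_{k\geq 0} K_0^{(k)}$ on annuli $\{2^k\leq |v|<2^{k+1}\}$. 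Each convolution $T_0^{(k)}f$ is pointwise dominated by $C Mf$ (the Hardy--Littlewood maximal function), hence $L^2(w)$-bounded with constant $\lesssim [w]_{A_2}$; summability over $k$ of the pieces $Z_\epsilon^{(k)}f=e^{-\epsilon|\cdot|}T_0^{(k)}f$ is then recovered from the multiplier $e^{-\epsilon|u|}$ by splitting on the relative size of $|u|$ and $2^k$: when $|u|\gtrsim 2^k$ the exponential yields a factor $e^{-c\epsilon 2^k}$, and when $|u|\lesssim 2^k$ one uses the Cauchy--Schwarz bound $\|K_0^{(k)}\|_{L^2}\lesssim 2^{-k/2}$ (giving $\|T_0^{(k)}f\|_\infty\lesssim 2^{-k/2}\|f\|_2$) combined with the exponential factor to gain a $2^{-k/2}$ factor after integration.

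The main obstacle is the non-cancellative, positive nature of all three kernels, which precludes direct application of Calder\'on--Zygmund theory: for $W$ this is overcome by its Stieltjes-type dilation structure, while for $Z_\epsilon,Z_\epsilon^*$ the exponential decay factor is indispensable for localising and summing the dyadic pieces with constants depending only on $\epsilon$ and $[w]_{A_2}$. A secondary technical point is converting the unweighted $L^2$-bounds one obtains from simple Cauchy--Schwarz arguments into $[w]_{A_2}$-dependent bounds; this is where the maximal function pointwise domination and the $A_2$-boundedness of $M$ come into play.
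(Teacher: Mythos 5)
Your plan has one conceptual misreading and one genuine gap.

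First, your assertion that the positivity of $W$ ``precludes direct application of Calder\'on--Zygmund theory'' is incorrect, and in fact the paper treats $W$ by exactly that route. Weighted $L^2(w)$ bounds for $A_2$ weights hold for any operator that (a) is known a priori to be $L^2(\RR)$-bounded and (b) has a kernel satisfying the standard size and H\"ormander regularity estimates; no cancellation of the kernel is required, since the $L^2$ bound is furnished separately (here by a Schur test with test function $|u|^{-\delta}$). One checks directly that $|W(u,u')| \lesssim |u-u'|^{-1}$ and $|\nabla W(u,u')| \lesssim (|u|+|u'|)^{-2} \leq |u-u'|^{-2}$, and the weighted bound follows from the classical theory. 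Your proposed detour through the Stieltjes transform on $(0,\infty)$ is not wrong in spirit, but it introduces the extra question of what $A_2$ restricted to a half-line means and how your Schur weight $u^{-1/2}$ interacts with a general $A_2$ weight; you do not carry this out, whereas the CZ route you dismissed is shorter and self-contained.

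The more serious problem is in the treatment of $Z_\epsilon$. In the regime $|u| \lesssim 2^k$ you invoke the estimate $\|T_0^{(k)} f\|_\infty \lesssim 2^{-k/2} \|f\|_{L^2}$, which involves the \emph{unweighted} $L^2$-norm of $f$. For a general $w \in A_2(\RR)$ there is no inequality $\|f\|_{L^2} \lesssim C([w]_{A_2}) \|f\|_{L^2(w)}$ (take $w$ small on a large set), so this estimate cannot be converted into a bound in terms of $\|f\|_{L^2(w)}$, and the summation in $k$ you describe does not close. Replacing it by a weighted Cauchy--Schwarz bound produces a factor $\left(\int |K_0^{(k)}(u-u')|^2\, w(u')^{-1}\,du'\right)^{1/2}$ whose size in turn depends on $(1/w)(B(u,2^{k+1}))$; controlling the resulting double integral against $w(u)$ is exactly where the doubling/self-improvement properties of $A_2$ must be invoked, and this is precisely what the paper's proof does (a single weighted Cauchy--Schwarz step, followed by estimating $\int_\RR \int_{|t|\geq 1} e^{-2\epsilon|u|} w(u) / (|t|^2 w(u-t))\,dt\,du$ via $A_p$ self-improvement with $p<2$ and doubling for both $w$ and $1/w$). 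Your dyadic decomposition is not needed and, as written, does not close; you should either fill in the weighted version of the small-$|u|$ estimate or adopt the more direct one-step Cauchy--Schwarz argument.
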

\begin{proof}
Let us first discuss the kernel $W$. By Schur's Test, one easily checks that the integral operator with kernel $W$ is $L^2(\RR)$-bounded: indeed, for any $\delta \in (0,1)$,
\[
\int_{\RR} W(u,u') |u'|^{-\delta} \,du' \simeq |u|^{-1} \int_0^{|u|} s^{-\delta} \,ds + \int_{|u|}^\infty s^{-1-\delta} \,ds \simeq_\delta |u|^{-\delta}.
\]
In addition, the kernel $W$ trivially satisfies the pointwise estimates
\begin{align*}
|W(u,u')| &\lesssim \frac{1}{|u-u'|}, \\
|\partial_u W(u,u')| + |\partial_{u'} W(u,u')| &\lesssim \frac{1}{(|u|+|u'|)^2} \leq \frac{1}{|u-u'|^2},
\end{align*}
i.e., $W$ is a standard Calder\'on--Zygmund kernel on $\RR$. The required weighted $L^2$ bounds for $W$ therefore follow from the classical theory of Calder\'on--Zygmund operators (see, e.g., \cite[Chapter V, Section 6.13]{St}).

It remains to discuss the kernels $Z_\epsilon$ and $Z_\epsilon^*$. It is immediately seen that the corresponding integral operators are adjoints of one another; as $A_2(\RR)$ is closed under the mapping $w \mapsto 1/w$, and $[1/w]_{A_2} = [w]_{A_2}$, it is enough to discuss the $L^2(w)$ bound for the kernel $Z_\epsilon$.

By a slight abuse of notation, we write $Z_\epsilon$ to denote the integral operator as well as its integral kernel. Let $w \in A_2(\RR)$. Then, for all $f \in L^2(w)$, by the Cauchy--Schwarz inequality, for all $u \in \RR$,
\[
|Z_\epsilon f(u)| \leq e^{-\epsilon |u|} \|f\|_{L^2(w)} \left( \int_{|u-u'| \geq 1} \frac{du'}{|u-u'|^2 \, w(u')} \right)^{1/2},
\]
and therefore
\[
\| Z_\epsilon f \|_{L^2(w)}^2 \leq \| f \|_{L^2(w)}^2 \int_\RR \int_{|u-u'| \geq 1} \frac{e^{-2\epsilon|u|} w(u)}{|u-u'|^2 \, w(u')} \,du' \,du.
\]
Thus, the desired bound will follow if we can prove that
\[
\int_\RR \int_{|t| \geq 1} \frac{e^{-2\epsilon|u|} w(u)}{|t|^2 \, w(u-t)} \,dt \,du \lesssim_{\epsilon,[w]_{A_2}} 1.
\]

Now, as both $w,1/w \in A_2(\RR)$, by the self-improving property of Muckenhoupt weights (see, e.g., \cite[Corollary 9.2.6]{Gr2}) we deduce that $w,1/w \in A_p(\RR)$ for some $p<2$ only depending on $[w]_{A_2}$. Thus, by the doubling property of Muckenhoupt weights (see, e.g., \cite[Proposition 9.1.5(9)]{Gr2}), we deduce, for all $x \in \RR$ and all $r_2 \geq r_1 > 0$, the estimate
\[
\frac{w((x-r_2,x+r_2))}{w((x-r_1,x+r_1))} \lesssim_{[w]_{A_2}}  \left(\frac{r_2}{r_1}\right)^p,
\]
as well as the analogous estimate with $w$ replaced by $1/w$; here we write $w(I) = \int_I w$ for any Borel set $I \subseteq \RR$, i.e., we identify the weight $w$ with the measure on $\RR$ with density $w$ with respect to the Lebesgue measure. As a consequence, for all $u \in \RR$,
\[\begin{split}
\int_{|t| \geq 1} |t|^{-2} \frac{dt}{w(u-t)} 
&\leq \sum_{k \geq 0} \sum_{\pm} 2^{-2k} \, (1/w)(u\pm [2^k,2^{k+1}]) \\
&\lesssim_{[w]_{A_2}}  \sum_{k \geq 0} 2^{-2k} 2^{kp} (1+|u|)^p \, (1/w)([-1,1]) \\
&\lesssim_{[w]_{A_2}} (1+|u|)^p \, (1/w)([-1,1]),
\end{split}\]
because $p<2$; in the intermediate inequality we used the doubling property for $1/w$. Thus,
\[\begin{split}
\int_\RR \int_{|t| \geq 1} \frac{e^{-2\epsilon|u|} w(u)}{|t|^2 \, w(u-t)} \,dt \,du
&\lesssim_{[w]_{A_2}} (1/w)([-1,1]) \int_{\RR} e^{-2\epsilon|u|} (1+|u|)^p w(u) \,du \\
&\lesssim_{\epsilon,[w]_{A_2}} (1/w)([-1,1]) \, w([-1,1]) \,du \lesssim [w]_{A_2},
\end{split}\]
as desired; here the second inequality follows, via a dyadic decomposition, from the doubling property for $w$, while the last inequality is just a consequence of the definition of the $A_2$ characteristic.
\end{proof}

\begin{proof}[Proof of Theorem \ref{thm:main}]
We already know (see \cite[Theorem 2.4]{HS} and \cite[Theorem 1.1]{MV}) that the Riesz transforms $\Rz_j$ are $L^p$-bounded for $p \in (1,2]$, so it remains to show, by duality, that the adjoint Riesz transforms $\Rz_j^*$ are $L^p$-bounded for $p \in (1,2)$. By Corollary \ref{cor:full_reduction_nohardy}, it is enough to check the analogous boundedness property for the convolution operators with kernels $K_j$. In light of Propositions \ref{prp:fmf} and \ref{prp:A2bound}, the kernels $K_j$ satisfy the assumption of Proposition \ref{prp:opval_fourier_multiplier}, whence the desired $L^p$-boundedness follows.
\end{proof}

\section{The Haar basis approach}\label{s:haar}

Here we aim at proving Theorem \ref{thm:wt11}, i.e., the weak type $(1,1)$ boundedness of the adjoint Riesz transforms $\Rz_j^*$ for $j=1,\dots,n$. The argument presented here is an extension of that in \cite{GS}, which treats the case $n=1$.

We point out that the argument of \cite{GS} was adapted in \cite{LMSTV} to prove the weak type $(1,1)$ boundedness of the adjoints of certain ``horizontal Riesz transforms'' on a homogeneous tree. In the context of the tree, the argument turns out to be particularly clean. However, in the continuous setting of $ax+b$-groups, the proof becomes somewhat more technical due to a number of steps that appear to be needed, as in \cite{GS}, to reduce the problem to a discrete model.

We start with a preliminary reduction, or discretisation.

\begin{lem}\label{lem:discrete_red}
Let $j=1,\dots,n$ and let $r_j$ be as in Proposition \ref{prp:kernel_reduction}. The adjoint Riesz transform $\Rz_j^*$ is of weak type $(1,1)$ if and only if the operator $T_j$ given by
\begin{equation}\label{eq:discrete_op}
T_j f(\cdot,k) = \sum_{h \geq k+1} \frac{\int_0^1 f(\cdot,h+s) *_{\RR^n} (r_j)_{(2^{h+s})} \,ds}{h-k}
\end{equation}
is bounded from $L^1(\RR^n \times \RR)$ to $L^{1,\infty}(\RR^n \times \ZZ)$.
\end{lem}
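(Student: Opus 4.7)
The starting point is Corollary \ref{cor:full_reduction_nohardy}, which reduces the weak type $(1,1)$ of $\Rz_j^*$ to that of the convolution operator $f \mapsto f * K_j$. Inserting the expression for $K_j$ from Proposition \ref{prp:kernel_reduction} into the convolution formula \eqref{eq:conv_kernel} and substituting $v = -u'$ gives
\begin{equation*}
-f * K_j(x,u) = \int_1^\infty v^{-1} \bigl( f(\cdot,u+v) *_{\RR^n} (r_j)_{(e^{u+v})} \bigr)(x) \,dv,
\end{equation*}
so the operator already has exactly the shape of \eqref{eq:discrete_op}, but in continuous variables and with base $e$ in place of $2$.

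The next step would be to pass from base $e$ to base $2$. Rescaling the second coordinate via $u \mapsto (\log 2)\,u$ (and simultaneously $v \mapsto (\log 2)\,v$) is a bijection $\RR \to \RR$ with constant Jacobian, so it preserves all $L^1 \to L^{1,\infty}$ bounds up to a universal factor and transforms $e^{u+v}$ into $2^{u+v}$, while changing the lower bound $v \geq 1$ into $v \geq 1/\log 2$; the contribution from the interval $v \in [\min(1,1/\log 2), \max(1,1/\log 2))$ has an integrable kernel on $G$ and can be absorbed. Then I would discretize: writing $u = k+\tau$ with $k \in \ZZ$ and $\tau \in [0,1)$, and $v = (h-k) + s'$ with $h-k \geq 1$ and $s' \in [0,1)$, the constraint $v \geq 1$ becomes $h \geq k+1$, and with $s \in [0,1)$ chosen to be the fractional part of $\tau + s'$ the continuous integrand, once averaged over $\tau$, reconstructs the summand of \eqref{eq:discrete_op}. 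Viewing $T_j f$ as a step function on $\RR^n \times \RR$ that is constant on each strip $\RR^n \times [k,k+1)$, its $L^{1,\infty}(\RR^n \times \RR)$ quasinorm is comparable to the $L^{1,\infty}(\RR^n \times \ZZ)$ quasinorm in the statement, allowing the equivalence to be read off in both directions.

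The principal obstacle is controlling the discretization errors, that is, the difference between the continuous integrand evaluated at $(u,v)$ and the sampled version in which the factors $v^{-1}$ and $(r_j)_{(2^{u+v})}$ have been replaced by $(h-k)^{-1}$ and $(r_j)_{(2^{h+s})}$. The weight error is mild: on $v \in [h-k, h-k+1)$ one has $v^{-1} - (h-k)^{-1} = O((h-k)^{-2})$, and the tail $\sum_{h > k} (h-k)^{-2}$ is summable, so the resulting remainder has an $L^1$ kernel on $G$ and is trivially of weak type $(1,1)$. The scale error is more delicate: within a unit cell $2^{u+v}/2^{h+s}$ lies in a fixed compact subinterval of $(0,\infty)$, so the difference $(r_j)_{(2^{u+v})} - (r_j)_{(2^{h+s})}$ is the convolution kernel of an operator on $\RR^n$ that can be dominated pointwise by a fixed integrable function composed with the Hardy--Littlewood maximal function in $x$, uniformly in the scale and in $(h-k)$; summing against the tail $(h-k)^{-1}$ needs a telescoping argument exploiting the cancellation already present in the weight discretization. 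I expect this interplay between the scale variation and the slowly decaying weight $(h-k)^{-1}$ to be the main technical point, the remaining manipulations being routine bookkeeping.
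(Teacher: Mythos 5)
Your high-level plan — reduce to $K_j$ via Corollary \ref{cor:full_reduction_nohardy}, change from base $e$ to base $2$ by a dilation in $u$, discretize, and compare $L^{1,\infty}$ quasinorms by viewing the discrete output as a step function — is the same as the paper's. But your choice of discretization variables creates an artificial difficulty that the paper avoids, and your proposed fix for it is speculative.

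Concretely: you parameterize by $u = k + \tau$ and $v = (h-k) + s'$, so the scale exponent becomes $u + v = h + (\tau + s')$ with $\tau + s' \in [0,2)$. When $\tau + s' \geq 1$ your ``discretized'' scale $2^{h+s}$ (with $s$ the fractional part of $\tau + s'$) differs from the true scale $2^{u+v}$ by a factor of $2$, so the ``scale error'' $(r_j)_{(2^{u+v})} - (r_j)_{(2^{h+s})}$ is \emph{not} small — it is of unit size in $L^1(\RR^n)$, and summing it against the non-summable weight $(h-k)^{-1}$ would diverge without genuine cancellation. You say you expect a ``telescoping argument exploiting the cancellation already present in the weight discretization'' to handle this, but you do not exhibit it, and it is not clear one exists; this is a real gap.

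The point you miss is that this error is entirely spurious. The scale of the kernel depends only on the integration variable $u' = u + v$, not separately on $u$. If you parameterize by $u'$ directly, writing $u' = h + s$ with $h = \lfloor u' \rfloor \in \ZZ$ and $s = \{u'\} \in [0,1)$, then $(r_j)_{(2^{u'})} = (r_j)_{(2^{h+s})}$ holds \emph{exactly} — there is no scale error at all. The only thing that needs discretizing is the weight $(u'-u)^{-1}$, replaced by $(\lfloor u'\rfloor - \lfloor u \rfloor)^{-1}$; together with the adjustment of the lower limit of integration from $u + 1/\log 2$ to $\lfloor u \rfloor + 1$, this produces an error operator whose integral kernel is bounded by a constant times $(r_j)_{(2^{u'})}(x-x')/(1+(u-u')^2)$, which is an $L^1(G)$-bounded remainder by Schur's test. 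The resulting main term depends on $u$ only through $\lfloor u \rfloor$, so it is literally $T_j f(\cdot, \lfloor u \rfloor)$, and the $L^{1,\infty}$ quasinorms over $\RR^n \times \RR$ and $\RR^n \times \ZZ$ then coincide (not just up to constants) because each slab $\RR^n \times [k,k+1)$ has unit width. Your proposal could in principle be salvaged by re-indexing $h \mapsto h + 1$ whenever $\tau + s' \geq 1$, but carrying that through correctly (tracking the shifted constraint $h \geq k+1$ versus $h \geq k$) just reproduces the cleaner $u'$-parameterization; better to adopt it from the start.
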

\begin{proof}
By Corollary \ref{cor:full_reduction}, the weak type $(1,1)$ boundedness of the adjoint Riesz transform is equivalent of that of the convolution operator by the kernel $K_j$.

Much as in the proof of Proposition \ref{prp:tildeK0}, we can write, for $f \in L^1(G)$,
\[
- f * K_j(\cdot,u) = \int_{u+1}^\infty \frac{f(\cdot,u') *_{\RR^n} (r_j)_{(e^{u'})}}{u'-u} \,du';
\]
thus, if $c = \log 2$ and $S f(x,u) = f(x,cu)$, then $S(-f*K_j) = \tilde T_j S f$, where
\[
\tilde T_j f(\cdot,u)
= \int_{u+1/c}^\infty \frac{f(\cdot,u') *_{\RR^n} (r_j)_{(2^{u'})}}{u'-u} \,du',
\]
and therefore the convolution operator by $K_j$ is of weak type $(1,1)$ if and only if the operator $\tilde T_j$ is. Now we write $\tilde T_j = \tilde T_j^1 + \tilde T_j^2$, where
\[
\tilde T_j^1 f(\cdot,u) = \int_{\lfloor u \rfloor+1}^\infty \frac{f(\cdot,u') *_{\RR^n} (r_j)_{(2^{u'})}}{\lfloor u' \rfloor - \lfloor u \rfloor} \,du', 
\]
while $T_j^2$ is an integral operator with kernel $H_j$ given by
\begin{multline*}
H_j((x,u),(x',u')) 
= (r_j)_{(2^{u'})}(x-x') \\
\times \left[ \chi_{\{u + 1/c \leq u'\}} \left(\frac{1}{u'-u}  - \frac{1}{\lfloor u' \rfloor - \lfloor u \rfloor} \right) - \frac{\chi_{\{\lfloor u \rfloor + 1 \leq u' \leq u+1/c\}}}{\lfloor u' \rfloor - \lfloor u \rfloor } \right].
\end{multline*}
As $c<1$, clearly $u' - u \simeq \lfloor u' \rfloor - \lfloor u \rfloor \geq 1$ in the region where $u'\geq u + 1/c$, whence it easily follows that the factor in brackets above is bounded in absolute value by a multiple of $1/(1+(u-u')^2)$;
as moreover $r_j \in L^1(\RR^n)$, this shows that $\sup_{x',u'} \iint |H_j((x,u),(x',u')| \,dx \,du \lesssim 1$, i.e., $\tilde T_j^2$ is bounded on $L^1(G)$. The weak type $(1,1)$ boundedness of $\tilde T_j$ is therefore reduced to that of $\tilde T_j^1$.

In order to conclude, it is enough to observe that
\[
\tilde T_j^1 f(\cdot,u) 
= \sum_{h \geq \lfloor u \rfloor +1} \int_0^1 \frac{f(\cdot,h+s) *_{\RR^n} (r_j)_{(2^{h+s})}}{h - \lfloor u \rfloor} \,ds 
= T_j f(\cdot, \lfloor u \rfloor)
\]
by \eqref{eq:discrete_op}, and therefore $\| \tilde T_j^1 f \|_{L^{1,\infty}(G)} = \| T_j f \|_{L^{1,\infty}(\RR^n \times \ZZ)}$.
\end{proof}

We now recall the key result from \cite{GS} that constitutes the core of the weak type boundedness argument. For this, we need some definitions.

For any half-open interval $I = [a,b) \subseteq \RR$, we write $I^- = [a,(a+b)/2)$ and $I^+ = [(a+b)/2,b)$ for the lower and upper halves of $I$, and write
\[
\psi_I = |I|^{-1} ( \chi_{I^-} - \chi_{I^+} )
\]
for the basic ($L^1$-normalised) Haar function supported on $I$.

Let $\lambda > 0$. A \emph{scale-$\lambda$ partition} is a partition of $\RR$ made of half-open intervals $[a,b)$ of length $\lambda$. A \emph{scale-$\lambda$ Haar-like function} is a function of the form
\[
\sum_{I \in \Part} a_I \psi_I,
\]
where $\Part$ is a scale-$\lambda$ partition and $a_I \in \CC$ for all $I \in \Part$. 
The following result is \cite[Theorem 3]{GS}.

\begin{thm}\label{thm:haar_est}
Let $\beta > 0$. For any $h \in \ZZ$, let $\Delta_h$ be a scale-$\beta 2^{h}$ Haar-like function. Assume that $\sum_{h \in \ZZ} \|\Delta_h\|_{L^1(\RR)} < \infty$. Then
\begin{equation}\label{eq:key_haar_est}
\left| \left\{ (t,k) \in \RR \times \ZZ \tc \left|\sum_{h \geq k+1} \frac{\Delta_h(t)}{h-k} \right| > \alpha \right\} \right| \lesssim \alpha^{-1} \sum_{h \in \ZZ} \|\Delta_h\|_{L^1(\RR)}
\end{equation}
for all $\alpha > 0$, where the implicit constant is absolute.
\end{thm}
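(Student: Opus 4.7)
My plan is to combine a Haar atomic decomposition with a Calder\'on--Zygmund type dichotomy, and then reduce the good part of the estimate, via summation by parts in the scale variable $h$, to a martingale-type maximal inequality.

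By homogeneity I may assume $\sum_h \|\Delta_h\|_{L^1(\RR)} = 1$, and the goal becomes $|\{(t,k) : |F(t,k)|>\alpha\}| \lesssim 1/\alpha$, where $F(t,k) = \sum_{h>k} \Delta_h(t)/(h-k)$. Expanding $\Delta_h = \sum_{I \in \Part_h} a_I^h \psi_I$ gives $\sum_{h}\sum_{I \in \Part_h} |a_I^h| = 1$ (since $\|\psi_I\|_{L^1}=1$), and the atom $(I,h)$ contributes a term of magnitude $|a_I^h|/(|I|(h-k))$ on the downward cone $I \times \{k \leq h-1\}$. Let $B_\alpha$ be the set of $(t,k)$ at which some single atomic contribution already exceeds $\alpha/2$; since each atom $(I,h)$ can produce such a violation only on $I \times \{k \leq h-1 : h-k < 2|a_I^h|/(\alpha|I|)\}$, of product measure at most $2|a_I^h|/\alpha$, summing over atoms yields $|B_\alpha| \leq 2/\alpha$.

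On the good remainder $G_\alpha = \{|F|>\alpha\} \setminus B_\alpha$, every individual atomic term is at most $\alpha/2$, so the excess over $\alpha$ is achieved only through superposition across scales. To exploit this, I apply summation by parts in $h$, which converts the non-summable harmonic kernel $1/(h-k)$ into the telescoping summable kernel $1/[\ell(\ell+1)]$ of total mass one:
\[
F(t,k) = \sum_{\ell \geq 1} \frac{R_\ell(t,k)}{\ell(\ell+1)}, \qquad R_\ell(t,k) := \sum_{m=k+1}^{k+\ell} \Delta_m(t).
\]
Consequently $|F(t,k)| \leq \sup_\ell |R_\ell(t,k)|$, and the task reduces to proving the weak-type bound $|G_\alpha \cap \{\sup_\ell |R_\ell|>\alpha\}| \lesssim 1/\alpha$.

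The hard part is this final maximal estimate, and this is where the Haar structure must be used decisively. Writing $R_\ell(t,k) = S_{k+\ell}(t) - S_k(t)$ with cumulative partial sums $S_m := \sum_{m' \leq m} \Delta_{m'}$, and reducing to nested dyadic partitions via a harmless translation of each $\Part_h$ (this reduction is the first technical step), the sequence $\{S_m\}_m$ becomes a discrete dyadic martingale with respect to the filtration generated by $\bigcup_h \Part_h$. A Doob-type maximal inequality, combined with the uniform bound $\|S_m\|_{L^1} \leq 1$, should then furnish the required estimate, with the restriction to $G_\alpha$ playing the role of the Calder\'on--Zygmund truncation of the martingale at height $\alpha$. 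The essential function of the prior bad-set removal is to prevent the trivial blow-up $|\{\sup_\ell |R_\ell|>\alpha\}|=\infty$ that would otherwise occur from a single atom with $|a_I^h|/|I| > \alpha$, whose entire downward cone $I \times \{k \leq h-1\}$ (of infinite measure) satisfies $\sup_\ell |R_\ell(t,k)| > \alpha$.
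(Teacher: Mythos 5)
The paper does not prove this result; it is quoted verbatim as \cite[Theorem~3]{GS}, so there is no internal proof to compare against, and I will evaluate your argument on its own terms. The early stages are sound: the normalization, the atomic expansion $\Delta_h = \sum_{I \in \Part_h} a_I^h \psi_I$ with $\sum_{h,I}|a_I^h|=1$ (because $\|\psi_I\|_{L^1}=1$ and the supports within a fixed $\Part_h$ are disjoint), the bad-set bound $|B_\alpha|\le 2/\alpha$, and the Abel summation identity $F(t,k)=\sum_{\ell\ge 1}R_\ell(t,k)/(\ell(\ell+1))$ are all correct.

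The gap lies entirely in the final step, and it is a genuine one. First, the reduction to nested dyadic partitions ``via a harmless translation of each $\Part_h$'' is not available: the offsets of the $\Part_h$ are unrelated across scales, and since every $\Delta_h$ is evaluated at the \emph{same} point $t$ in the sum defining $F$, you cannot shift the $\Delta_h$ independently without changing the operator. Second, even granting a martingale structure, a Doob maximal inequality controls the Lebesgue measure in $t$ of a one-dimensional set, whereas \eqref{eq:key_haar_est} is a bound on product measure in $\RR\times\ZZ$; nothing in the Doob bound limits the number of bad indices $k$ for a given $t$. In fact the set $\{(t,k): \sup_\ell |R_\ell(t,k)|>\alpha\}$ can have \emph{infinite} measure: take a single atom $\Delta_{h_0}=a\psi_I$ with $|a|/|I|>\alpha$, for which the entire cone $I\times\{k\le h_0-1\}$ lies in this set. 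So the pointwise majorization $|F|\le\sup_\ell|R_\ell|$ discards too much, and the restriction to $G_\alpha$ does not repair this, because $B_\alpha$ is defined via the \emph{weighted} atomic contributions $a_I^h\psi_I(t)/(h-k)$ to $F$, not via the unweighted partial sums $R_\ell$; there is no mechanism in your argument that turns the first smallness condition into a truncation of the second. The claim that removing $B_\alpha$ plays the role of a Calder\'on--Zygmund stopping-time truncation of the martingale is thus an unsupported hope rather than a proof. You would need to exploit the Haar structure (mean-zero within each cell, and the interaction between cells at different scales hitting the same $t$) in a genuinely two-variable way, as Gaudry and Sj\"ogren do in the cited reference.
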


While there are similarities between the sum in the left-hand side of \eqref{eq:key_haar_est} and the expression for the operator $T_j$ in \eqref{eq:discrete_op}, a number of further reductions are needed before we can apply Theorem \ref{thm:haar_est} to deduce information on $T_j$.

The following result is essentially a rephrasing of \cite[Lemma 4]{GS}. Here, if $J$ is a countable set, we denote by $\ell \log \ell(J)$ the set of all sequences $a : J \to \CC$ such that
\[
\sum_{j \in J} |a(j)| (1+\left|\log a(j)\right|) < \infty;
\]
moreover, for all $\delta \in (0,\infty)$, we denote by $\ell^\delta(J)$ the set of all sequences $a : J \to \CC$ such that
\[
\sum_{j \in J} |a(j)|^\delta  < \infty.
\]
Clearly
\begin{equation}\label{eq:incl_llogl}
\ell^\delta(J) \subseteq \ell \log \ell(J)
\end{equation}
whenever $\delta \in (0,1)$.

\begin{lem}\label{lem:haar_dec}
There exists a family $\Part = \bigcup_{m \in \ZZ} \Part_m$ of half-open intervals, where $\Part_m$ is a scale-$2^m$ partition for all $m \in \NN$, such that the following hold.
Let $\epsilon > 0$.
Let $\rho \in C^1(\RR)$ satisfy
\[
|\rho(t)| \leq (1+|t|)^{-1-\epsilon}, \qquad |\rho'(t)| \leq (1+|t|)^{-2-\epsilon}
\]
for all $t \in \RR$, and $\int_\RR \rho(t) \,dt = 0$.
Set $c_I(\rho) = |I| \int_\RR \psi_{I}(t) \rho(t) \,dt$ for all $I \in \Part$.
Then
\begin{equation}\label{eq:haar_dec}
\rho = \sum_{I \in \Part} c_{I}(\rho) \psi_{I},
\end{equation}
where the series converges uniformly on $\RR$ and in $L^1(\RR)$. Moreover
\begin{equation}\label{eq:haar_coeff_est}
|c_I(\rho)| \leq \kappa_\epsilon(I)
\end{equation}
for all $I \in \Part$, where the sequence $(\kappa_\epsilon(I))_{I \in \Part}$ depends only on $\epsilon$ and not on $\rho$, and there exists $\delta = \delta(\epsilon) \in (1/2,1)$ such that
\begin{equation}\label{eq:llogl_coeff}
((1+\log_+ |I|)^N \kappa_\epsilon(I))_{I \in \Part} \in \ell^\delta(\Part)
\end{equation}
for all $N \geq 0$.
\end{lem}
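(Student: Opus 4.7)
My plan is to define $\Part_m$ so that at every scale $2^m$ the interval of $\Part_m$ containing the origin is positioned to fully exploit the cancellation $\int_\RR \rho = 0$. Concretely, take
\[
\Part_m \defeq \bigl\{ [-2^{m-2} + k 2^m, \; 3 \cdot 2^{m-2} + k 2^m) : k \in \ZZ \bigr\},
\]
so that the central interval $I_0^{(m)} = [-2^{m-2}, 3\cdot 2^{m-2})$ has its lower half $[-2^{m-2}, 2^{m-2})$ symmetric about the origin. This family is not nested across scales, but the sacrifice of nestedness is precisely what enables summable coefficient bounds at large scales.

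To prove the bound \eqref{eq:haar_coeff_est}, I use two complementary estimates. For $I \in \Part_m$ with $\mathrm{dist}(0,I) \gtrsim |I|$, the mean value theorem applied to $c_I(\rho) = \int_{I^-} \rho - \int_{I^+}\rho$ together with the hypothesis on $\rho'$ yields $|c_I(\rho)| \lesssim |I|^2 (1+\mathrm{dist}(0,I))^{-2-\epsilon}$. For the central interval $I_0^{(m)}$, the cancellation $\int_\RR \rho = 0$ gives $\int_{(I_0^{(m)})^-}\rho = -\int_{|t|>2^{m-2}}\rho = O(2^{-\epsilon m})$, while $\int_{(I_0^{(m)})^+}\rho = O(2^{-\epsilon m})$ follows immediately from the decay of $\rho$; hence $|c_{I_0^{(m)}}(\rho)| \lesssim 2^{-\epsilon m}$. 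Setting
\[
\kappa_\epsilon(I) \defeq C_\epsilon \min \bigl\{ |I|^2 (1+\mathrm{dist}(0,I))^{-2-\epsilon}, \; |I|^{-\epsilon} \bigr\}
\]
yields \eqref{eq:haar_coeff_est}. Scale-by-scale estimation then gives $\sum_{I \in \Part_m}\kappa_\epsilon(I)^\delta \lesssim 2^{(2\delta - 1)m}$ for $m \leq 0$ and $\lesssim 2^{-\epsilon m\delta}$ for $m \geq 0$, provided $\delta \in (1/2,1)$ (so that $(2+\epsilon)\delta > 1$ controls the off-central contributions). Both geometric tails are summable, and the polylogarithmic factor $(1+\log_+|I|)^{N\delta} \lesssim (1+\max(m,0))^{N\delta}$ is absorbed by the geometric decay, yielding \eqref{eq:llogl_coeff}.

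It remains to establish the decomposition \eqref{eq:haar_dec}. Since $\kappa_\epsilon$ is bounded and lies in $\ell^\delta(\Part)$ with $\delta < 1$, it lies in $\ell^1(\Part)$, so $\sum_I c_I(\rho)\psi_I$ converges absolutely in $L^1(\RR)$; a similar scale-wise geometric argument combined with $\|\psi_I\|_\infty = |I|^{-1}$ gives uniform convergence. The delicate point is identifying the sum with $\rho$, since the non-nestedness of the $\Part_m$ prevents a direct telescoping of conditional expectations. The natural route is to verify the identity pointwise via partial summation: compare with the standard-dyadic Haar expansion $\rho = \sum_I c_I^{\mathrm{std}}(\rho)\psi_I^{\mathrm{std}}$ (which converges pointwise by the classical telescoping of $E_m\rho$) and control the discrepancy using the coefficient bounds already derived. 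The main obstacle is this reconciliation: ensuring that the mean-zero cancellation of $\rho$ propagates through the shifted, misaligned partitions so that the non-nested Haar series still recovers $\rho$.
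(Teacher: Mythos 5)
Your coefficient bound $\kappa_\epsilon(I) = C_\epsilon \min\{|I|^2(1+\mathrm{dist}(0,I))^{-2-\epsilon}, |I|^{-\epsilon}\}$ is, up to constants, equivalent to the one the paper imports from Gaudry--Sj\"ogren (written there as $C(\epsilon)2^{\epsilon m}(1+2^m+|k|)^{-2-\epsilon}$ with $|D_{mk}| = 2^{-m}$), and your scale-by-scale verification of the $\ell^\delta$ property \eqref{eq:llogl_coeff}, including absorbing the $(1+\log_+|I|)^N$ factor into the geometric decay, agrees with what the paper does and is correct.

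However, the crux of the lemma is the reconstruction identity \eqref{eq:haar_dec}, and your proposal does not prove it --- you flag it yourself as ``the main obstacle'' in the last sentence. This is a genuine gap, not a deferrable detail. Because your partitions $\Part_m$ are not nested (the central interval is re-centred at every scale), the single-scale block $\sum_{I \in \Part_m} c_I(\rho)\psi_I$ is not a martingale difference $E_{m-1}\rho - E_m\rho$, so the series has no telescoping structure, and it is not clear that your particular family of shifted grids even admits a reproducing formula for mean-zero $\rho$. The route you sketch --- compare with the standard dyadic Haar expansion and ``control the discrepancy'' --- does not obviously lead anywhere: the two series are different representations of $\rho$, not perturbations of one another, and no quantitative comparison of partial sums is offered. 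The paper itself does not attempt a self-contained argument here: it cites \cite[Lemma 4]{GS} for both the decomposition and the coefficient estimate, and proves only the new $\ell^\delta$ statement. Gaudry and Sj\"ogren's lemma rests on a specific construction of the intervals $D_{mk}$ together with a nontrivial argument that the associated Haar-like series reconstructs $\rho$; that argument, or an analogue adapted to your variant construction, is precisely what is missing from your proposal.
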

\begin{proof}
If we take $\Part_{-m} = \{ D_{mk} \}_{k \in \ZZ}$ for all $m \in \ZZ$, where the $D_{mk}$ are the intervals defined in \cite[Section 1.3]{GS}, then 
\cite[Lemma 4]{GS} gives the decomposition \eqref{eq:haar_dec} and the estimate \eqref{eq:haar_coeff_est} with
\[
\kappa_\epsilon(D_{mk}) = C(\epsilon) 2^{\epsilon m} (1+2^m + |k|)^{-2-\epsilon}.
\]
It only remains to check \eqref{eq:llogl_coeff}. Notice that, for any $N \geq 0$,
\[
(1+\log_+(|D_{mk}|))^N \kappa_\epsilon(D_{mk}) \lesssim_{\epsilon,N} (1+m_-)^N 2^{\epsilon m} (1+2^m + |k|)^{-2-\epsilon},
\]
where $m_- = \max\{-m,0\}$ is the negative part of $m$.
We can now find $\delta = \delta(\epsilon) \in (1/2,1)$ such that $\delta(2+\epsilon) > 2$. As a consequence,
\[\begin{split}
\sum_{m,k \in \ZZ} \left((1+\log_+(|D_{mk}|))^N \kappa_\epsilon(D_{mk})\right)^\delta 
&\lesssim_{\epsilon,N} \sum_{m \in \ZZ} (1+m_{-})^{N\delta} 2^{\epsilon \delta m} (1+2^m)^{1-\delta(2+\epsilon)} \\
&\lesssim_{\epsilon,N} \sum_{m \geq 0} 2^{m(1-2\delta)} + \sum_{m<0} |m|^{N\delta} 2^{-\delta\epsilon|m|} < \infty,
\end{split}\]
thus $((1+\log_+(|D_{mk}|))^N \kappa_\epsilon(D_{mk}))_{m,k \in \ZZ} \in \ell^\delta$.
\end{proof}

We finally recall some addition results for the quasi-Banach space $L^{1,\infty}$ (see, e.g., \cite[Lemma 2.3]{StW} and \cite[Proposition 3]{St0}).

\begin{lem}\label{lem:wtsum}
Let $X$ be a measure space and $N \in \NN \setminus \{0\}$.
\begin{enumerate}[label=(\roman*)]
\item\label{en:wtsum_N} If $F_1,\dots,F_N \in L^{1,\infty}(X)$, then $F_1 + \dots + F_N \in L^{1,\infty}(X)$ and
\[
\left\|\sum_{j=1}^N F_j \right\|_{L^{1,\infty}} \leq 4 (1+\log N) \sum_{j=1}^N \|F_j\|_{L^{1,\infty}}.
\]

\item\label{en:wtsum_llogl} If $F_1,\dots,F_N \in L^{1,\infty}(X)$, and $\|F_j\|_{L^{1,\infty}} \leq A_j$ for some $A_j > 0$ and all $j=1,\dots,N$, with $\sum_{j=1}^N A_j = 1$, then
\[
\left\|\sum_{j=1}^N F_j \right\|_{L^{1,\infty}} \leq 4 + 2 \sum_{j=1}^N A_j \log (1/A_j) .
\]

\end{enumerate}
\end{lem}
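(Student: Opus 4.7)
The plan is to prove both estimates by a unified three-tier truncation argument, exploiting the definition $\|F\|_{L^{1,\infty}} = \sup_{\alpha > 0} \alpha |\{|F| > \alpha\}|$. Fix $\alpha > 0$ and, for each $j$, choose thresholds $0 < \sigma_j < \tau_j$; decompose $F_j = S_j + M_j + L_j$ with $S_j = F_j \chi_{\{|F_j| \leq \sigma_j\}}$, $L_j = F_j \chi_{\{|F_j| > \tau_j\}}$, and $M_j$ the remaining ``medium'' piece. The thresholds will be arranged so that $\sum_j |S_j| \leq \alpha/2$ pointwise, whence $\{|{\textstyle\sum_j} F_j| > \alpha\} \subseteq \{|{\textstyle\sum_j} M_j| > \alpha/4\} \cup \{|{\textstyle\sum_j} L_j| > \alpha/4\}$. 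The ``large'' set is handled by a union bound together with the weak-type estimate, giving $|\{|{\textstyle\sum_j} L_j| > \alpha/4\}| \leq \sum_j \|F_j\|_{L^{1,\infty}}/\tau_j$, while the medium set is estimated by Chebyshev's inequality after the layer-cake bound
\[
\int |M_j| \leq \sigma_j |\{|F_j|>\sigma_j\}| + \int_{\sigma_j}^{\tau_j} |\{|F_j|>s\}|\,ds \leq \|F_j\|_{L^{1,\infty}} \left(1 + \log(\tau_j/\sigma_j)\right);
\]
this ratio is precisely the source of the logarithmic factors in the two claims.

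For part \ref{en:wtsum_N}, I would take $\sigma_j = \alpha/(2N)$ and $\tau_j = \alpha$ (both independent of $j$). The pointwise bound $\sum_j|S_j| \leq N \cdot \alpha/(2N) = \alpha/2$ is immediate, the ratio $\tau_j/\sigma_j = 2N$ contributes a factor $1 + \log(2N)$ to the medium part, and combining this with the large contribution and multiplying by $\alpha$ yields a bound of the form $C(1+\log N) \sum_j \|F_j\|_{L^{1,\infty}}$. For part \ref{en:wtsum_llogl}, I would instead tailor the lower threshold to each summand by setting $\sigma_j = \alpha A_j/2$ and keeping $\tau_j = \alpha$; the assumption $\sum_j A_j = 1$ then delivers $\sum_j |S_j| \leq (\alpha/2)\sum_j A_j = \alpha/2$, the large contribution becomes $\sum_j A_j/\alpha = 1/\alpha$, and the medium integrals satisfy $\int |M_j| \leq A_j(1+\log(2/A_j))$. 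Summing on $j$ and multiplying by $\alpha$ separates a $O(1)$ term from $\sum_j A_j \log(1/A_j)$, reproducing the shape of the claimed right-hand side.

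The main technical subtlety is that weak-$L^1$ functions need not be integrable, so the upper truncation at level $\tau_j$ is indispensable: without it, the layer-cake integral for $\int |M_j|$ would diverge logarithmically at infinity. Beyond this point the proof is essentially bookkeeping, and the remaining work is the optimisation of constants needed to produce the sharp values $4(1+\log N)$ and $4 + 2\sum_j A_j \log(1/A_j)$ stated in the lemma; this likely requires an asymmetric split of $\{|{\textstyle\sum_j}(M_j+L_j)| > \alpha/2\}$ into $M$- and $L$-pieces with a carefully chosen proportion, rather than the symmetric split at $\alpha/4$ used above.
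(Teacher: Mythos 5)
The paper does not prove this lemma; it cites \cite[Lemma 2.3]{StW} and \cite[Proposition 3]{St0}, so there is no in-paper argument to compare against. That said, your three-tier truncation is the standard route and is essentially correct, and the ``optimisation of constants'' you flag is not deep; a slight change of thresholds produces exactly the stated constants. The right choices are: pick $\sigma_j$ so that $\sum_j\sigma_j=\alpha/2$ (for part \ref{en:wtsum_llogl} take $\sigma_j = \alpha A_j/2$, for part \ref{en:wtsum_N} take $\sigma_j=\alpha/(2N)$), and take the upper cap $\tau_j=\alpha/2$ for every $j$. Then on the complement of the exceptional set $E=\bigcup_j\{|F_j|>\alpha/2\}$, which has measure at most $\sum_j 2\|F_j\|_{L^{1,\infty}}/\alpha$, every $L_j$ vanishes, so there $\left|\sum_j F_j\right|\le \alpha/2 + \left|\sum_j M_j\right|$ and one only needs to bound $\left|\left\{\left|\sum_j M_j\right|>\alpha/2\right\}\right|$ --- no quarter split, and no Chebyshev for the large piece at all (just the union bound on supports). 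With $\tau_j/\sigma_j=1/A_j$ your layer-cake estimate gives $\int|M_j|\le A_j\left(1+\log(1/A_j)\right)$, and Chebyshev at level $\alpha/2$ yields $(2/\alpha)\bigl(1+\sum_j A_j\log(1/A_j)\bigr)$; adding the $2/\alpha$ from $E$ gives the claimed $4+2\sum_j A_j\log(1/A_j)$. Part \ref{en:wtsum_N} then follows either by the same choice with $\sigma_j=\alpha/(2N)$, $\tau_j=\alpha/2$ (giving $4+2\log N\le 4(1+\log N)$), or simply from part \ref{en:wtsum_llogl} with $A_j=\|F_j\|_{L^{1,\infty}}/\sum_k\|F_k\|_{L^{1,\infty}}$ together with $\sum_j A_j\log(1/A_j)\le\log N$. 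Your choice $\tau_j=\alpha$ and the symmetric $\alpha/4$ split cost you an extra factor and a stray $\log 2$, which is why you did not reach the stated bounds, but the decomposition itself is the correct one.
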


We are now ready to prove the weak type bound for the adjoint Riesz transforms.

\begin{proof}[Proof of Theorem \ref{thm:wt11}]
Without loss of generality we assume that $j=1$. We write any $x \in \RR^n$ as $(x_1,x')$, where $x' = (x_2,\dots,x_n) \in \RR^{n-1}$.

By Lemma \ref{lem:discrete_red}, in order to show that $\Rz_1^*$ is of weak type $(1,1)$, it is enough to show the analogous property for the operator $T_1$ defined in \eqref{eq:discrete_op}.

Let $f \in L^1(\RR^n \times \RR)$ be bounded and compactly supported. Set $f^u(x) = f(x,u)$. We now define, for any $u \in \RR$, an approximation of $f^u$ by means of a sequence of measures on $\RR^n$.
Specifically, for any $u \in \RR$, $x'\in \RR^{n-1}$, and $h \in \ZZ$, we define
\begin{equation}\label{eq:approx_measure}
a^u_{h,\ell}(x') = \int_{[\ell 2^h, (\ell+1) 2^h)} f^u(x_1,x') \,dx_1, \qquad \mu_h^{u} = \sum_{\ell \in \ZZ} \delta_{\ell 2^{h}} \otimes a_{h,\ell}^u .
\end{equation}
It is easy to check that
\begin{equation}\label{eq:sum_int}
\sum_{\ell \in \ZZ} |a^u_{h,\ell}(x')| \leq \int_{\RR} |f^u(x_1,x')| \,dx_1
\end{equation}
and that the sequence of measures $\mu_h^{u}$ converges weakly to $f^u$ as $h \to -\infty$; moreover, one can readily prove that
\begin{equation}\label{eq:conv_approx_est}
\|\mu_h^u * \phi - f^u * \phi\|_{L^p(\RR^n)} \leq 2^h \| f^u \|_{L^1(\RR^n)} \| \partial_{x_1} \phi \|_{L^p(\RR^n)}
\end{equation}
for any $p \in [1,\infty]$ and any bounded $\phi \in C(\RR^n)$ with $\partial_{x_1} \phi \in L^p(\RR^n)$.
Thus, if we set
\[
\mu_{h,d}^u = \begin{cases}
\mu_h^u &\text{if } d=0,\\
\mu_{h-d}^u - \mu_{h-d+1}^u&\text{if } d>0,\\
\end{cases}
\]
then we can write, at least in the sense of weak convergence,
\[
f^u = \sum_{d \geq 0} \mu_{\lfloor u \rfloor,d}^{u}
\]
and consequently, for all $h \in \ZZ$,
\begin{equation}\label{eq:decomposition}
\begin{split}
\int_0^1 f^{h+s} *_{\RR^n} (r_1)_{(2^{h+s})} \,ds = \sum_{d \geq 0} \int_0^1 \mu_{h,d}^{h+s} *_{\RR^n} (r_1)_{(2^{h+s})} \,ds.
\end{split}
\end{equation}

We point out that, as $f$ is bounded and compactly supported, the terms $f^{h+s}$ and $\mu_{h,d}^{h+s}$ in \eqref{eq:decomposition} are nonvanishing only for finitely many $h \in \ZZ$, and moreover, for any $h \in \ZZ$, by \eqref{eq:conv_approx_est}, the convergence of the series in the right-hand side of \eqref{eq:decomposition} holds in any $L^p(\RR^n)$ space, $p \in [1,\infty]$. Thus, from \eqref{eq:discrete_op} we deduce that
\begin{equation}\label{eq:T1_approx}
T_1 f(\cdot, k) = \sum_{d \geq 0} \sum_{h \geq k+1} \frac{\int_0^1 \mu_{h,d}^{h+s} *_{\RR^n} (r_1)_{(2^{h+s})} \,ds}{h-k},
\end{equation}
where the sum in $h$ is actually finite, and the sum in $d$ converges in $L^p(\RR^n)$ for any $p \in [1,\infty]$.

Notice now that, from \eqref{eq:approx_measure}, it follows that
\[
a_{h+1,\ell}^u = a_{h,2\ell}^u + a_{h,2\ell+1}^u,
\]
thus
\[\begin{split}
\mu_{h+1}^{u} - \mu_{h}^{u} 
&= \sum_{\ell \in \ZZ} \delta_{\ell 2^{h+1}} \otimes a_{h+1,\ell}^u  - \sum_{\ell \in \ZZ} \delta_{\ell 2^{h}} \otimes a_{h,\ell}^u \\
&= \sum_{\ell \in \ZZ} (\delta_{\ell 2^{h+1}} - \delta_{\ell 2^{h+1} + 2^h}) \otimes a_{h,2\ell+1}^u  \\
&= \tau_{-2^h e_1}\sigma_{h}^u- \sigma_h^u,
\end{split}\]
where
\[
\sigma_h^u \defeq \sum_{\ell \in \ZZ} \delta_{\ell 2^{h+1} + 2^h} \otimes a_{h,2\ell+1}^u,
\]
$e_1 = (1,0,\dots,0) \in \RR^n$, and $\tau_v$ is the translation operator by $v \in \RR^n$.
Therefore, if $d>0$, then
\[\begin{split}
\mu^{h+s}_{h,d} *_{\RR^n} (r_1)_{(2^{h+s})} 
&= (\sigma_{h-d}^{h+s} - \tau_{-2^{h-d} e_1}\sigma_{h-d}^{h+s}) *_{\RR^n} (r_1)_{(2^{h+s})} \\
&= \sigma_{h-d}^{h+s} *_{\RR^n} ((r_1)_{(2^{h+s})} - \tau_{-2^{h-d} e_1} (r_1)_{(2^{h+s})}) \\
&= \sigma_{h-d}^{h+s} *_{\RR^n} ((r_1)_{(2^{s})} -  \tau_{-2^{-d} e_1} (r_1)_{(2^{s})})_{(2^h)}.
\end{split}\]
In conclusion, we have established that, for all $d \geq 0$,
\begin{equation}\label{eq:mu_sigma}
\mu^{h+s}_{h,d} *_{\RR^n} (r_1)_{(2^{h+s})} = \sigma_{h,d}^{h+s} * (R_d^s)_{(2^h)},
\end{equation}
where
\[
\sigma_{h,d}^u = \begin{cases}
\mu_{h}^u &\text{if } d=0,\\
\sigma_{h-d}^u &\text{if } d>0,
\end{cases}
\qquad
R_d^s = \begin{cases}
(r_1)_{(2^s)} &\text{if } d=0,\\
(r_1)_{(2^{s})} -  \tau_{-2^{-d} e_1} (r_1)_{(2^{s})} &\text{if } d>0,
\end{cases}
\]
and actually
\begin{equation}\label{eq:dec_measure}
\sigma_{h,d}^u = \sum_{\ell \in \ZZ} \delta_{\ell 2^{h-d}} \otimes a_{h,d,\ell}^u ,
\end{equation}
where
\begin{equation}\label{eq:new_coeff}
a_{h,d,\ell}^u = \begin{cases}
a_{h-d,\ell}^u &\text{if } d=0 \text{ or } \ell \text{ is odd},\\
0 &\text{otherwise}.
\end{cases}
\end{equation}

From the formula for $r_1$ in Proposition \ref{prp:kernel_reduction} we deduce that $\int_\RR R_0^s(x_1,x') \,dx_1 = 0$ for all $x' \in \RR^{n-1}$ and $s \in [0,1)$, and moreover
\[
|R_0^s(x)| \lesssim (1+|x|)^{-n-1}, \quad |\partial_{x_1} R_0^s(x)| \lesssim (1+|x|)^{-n-2}, \quad |\partial^2_{x_1} R_0^s(x)| \lesssim (1+|x|)^{-n-3}
\]
uniformly in $s \in [0,1)$. Additionally, when $d > 0$, we can write
\[
R_d^s(x) = -2^{-d} \int_{0}^1 \partial_{x_1} R_0^s(x_1+2^{-d}t,x') \,dt.
\]
Consequently the zero-average property $\int_\RR R_d^s(x_1,x') \,dx_1 = 0$ holds for all $d \in \NN$, and there exists $\epsilon > 0$ such that
\begin{align*}
|R_d^s(x)| &\lesssim 2^{-d} (1+|x'|)^{-(n-1)-\epsilon} (1+|x_1|)^{-1-\epsilon} ,\\
|\partial_{x_1} R_d^s(x)| &\lesssim 2^{-d} (1+|x'|)^{-(n-1)-\epsilon} (1+|x_1|)^{-2-\epsilon} 
\end{align*}
uniformly in $d \in \NN$ and $s \in [0,1)$.
Thus, by Lemma \ref{lem:haar_dec}, for all $s \in [0,1)$ and $d \in \NN$, we can write
\begin{equation}\label{eq:dec_ker}
R_d^s = \sum_{I \in \Part} \psi_I \otimes c_{I,d}^s 
\end{equation}
where the functions $c_{I,d}^s = |I| \int_I \psi_I(x_1) R_d^s(x_1,\cdot) \,dx_1$ satisfy, for all $x' \in \RR^{n-1}$,
\begin{equation}\label{eq:est_coeff}
|c_{I,d}^s(x')| \lesssim 2^{-d} (1+|x'|)^{-(n-1)-\epsilon} \kappa_\epsilon(I).
\end{equation}

Consequently, by \eqref{eq:mu_sigma}, \eqref{eq:dec_measure} and \eqref{eq:dec_ker}, we can write
\begin{equation}\label{eq:second_decomposition}
\begin{split}
&\int_{0}^1 \mu^{h+s}_{h,d} *_{\RR^n} (r_1)_{(2^{h+s})} \,ds  \\
&= \sum_{I \in \Part} \sum_{\ell \in \ZZ}  (\delta_{\ell 2^{h-d}} *_\RR (\psi_{I})_{(2^h)})  \otimes \int_0^1 (a_{h,d,\ell}^{h+s} *_{\RR^{n-1}} (c_{I,d}^s)_{(2^h)}) \,ds\\
&= \sum_{I \in \Part} G_{I,d,h} = \sum_{I \in \Part} \sum_{j=0}^{|2^d I|_*-1} G_{I,d,h,j},
\end{split}
\end{equation}
where $|I|_* \defeq \max\{1,|I|\}$, and
\begin{equation}\label{eq:new_haar}
G_{I,d,h,j} = \sum_{\ell \in j+|2^d I|_* \ZZ} \psi_{\ell 2^{h-d} + 2^h I}  \otimes \int_0^1 (a_{h,d,\ell}^{h+s} *_{\RR^{n-1}} (c_{I,d}^s)_{(2^h)}) \,ds.
\end{equation}

We now point out that, for any fixed $x' \in \RR^{n-1}$, the function $G_{I,d,h,j}(\cdot,x')$ is a scale-$2^h|I|$ Haar-like function; indeed, the gap between consecutive indices $\ell$ in the sum in the right-hand side of \eqref{eq:new_haar} is a multiple of $2^d |I|$, thus the intervals $\ell 2^{h-d} + 2^h I$ are all disjoint and contained in a scale-$2^h|I|$ partition.
So from Theorem \ref{thm:haar_est} we deduce that, for any $d \in \NN$, $I \in \Part$, $x' \in \RR^{n-1}$, and $j=0,\dots,|2^d I|_*-1$,
\[\begin{split}
&\left\|\sum_{h \geq k+1} \frac{G_{I,d,h,j}(x_1,x')}{h-k}\right\|_{L^{1,\infty}(\RR_{x_1} \times \ZZ_{k})} \\
&\lesssim \sum_{h \in \ZZ} \|G_{I,d,h,j}(x_1,x')\|_{L^1(\RR_{x_1})}  \\
&\leq  \sum_{h \in \ZZ} \sum_{\ell \in j+|2^d I|_* \ZZ} \int_0^1 |(a_{h,d,\ell}^{h+s} *_{\RR^{n-1}} (c_{I,d}^s)_{(2^h)})(x')| \,ds \\
&\lesssim \sum_{h \in \ZZ} \sum_{\ell \in j+|2^d I|_* \ZZ} 2^{-d} \kappa_\epsilon(I) \left(\int_0^1 |a_{h-d,\ell}^{h+s}| \,ds *_{\RR^{n-1}} ((1+|\cdot|)^{-(n-1)-\epsilon})_{(2^h)}\right)(x'),
\end{split}\]
where \eqref{eq:new_coeff} and \eqref{eq:est_coeff} were used.
Thus, by integration in $x' \in \RR^{n-1}$ and summation in $j$,
\[\begin{split}
&\sum_{j=0}^{|2^d I|_*-1}\left\|\sum_{h \geq k+1} \frac{G_{I,d,h,j}(x)}{h-k}\right\|_{L^{1,\infty}(\RR^n_{x} \times \ZZ_{k})} \\
&\lesssim \sum_{h,\ell \in \ZZ} 2^{-d} \kappa_\epsilon(I) \int_{\RR^{n-1}} \int_0^1 |a_{h-d,\ell}^{h+s}(x')| \,ds \,dx' \\
&= 2^{-d} \kappa_\epsilon(I) \sum_{\ell \in \ZZ} \int_{\RR^{n-1}} \int_\RR |a_{\lfloor u \rfloor-d,\ell}^u(x')| \,du \,dx' \\
&\leq 2^{-d} \kappa_\epsilon(I) \| f \|_{L^1(\RR^n \times \RR)},
\end{split}\]
where Young's inequality was applied to estimate the convolution in $\RR^{n-1}$, and \eqref{eq:sum_int} was used in the last step.
So, by \eqref{eq:second_decomposition} and Lemma \ref{lem:wtsum}\ref{en:wtsum_N}, for all $d \in \NN$ and $I \in \Part$,
\begin{equation}\label{eq:llogl_summable_estimate}
\begin{split}
&\left\|\sum_{h \geq k+1} \frac{G_{I,d,h}(x)}{h-k}\right\|_{L^{1,\infty}(\RR^n_{x} \times \ZZ_{k})} \\
&\lesssim (1+\log_+|2^d I|) 2^{-d} \kappa_\epsilon(I) \| f \|_{L^1(\RR^n \times \RR)} \\
&\lesssim (1+|d|+\log_+|I|) 2^{-d} \kappa_\epsilon(I) \| f \|_{L^1(\RR^n \times \RR)}.
\end{split}
\end{equation}

Now, from \eqref{eq:llogl_coeff}, we deduce that, for some $\delta = \delta(\epsilon) \in (1/2,1)$,
\[\begin{split}
&\sum_{d \in \NN} \sum_{I \in \Part} [(1+|d|+\log_+|I|) 2^{-d} \kappa_\epsilon(I) ]^\delta  \\
&\lesssim_\epsilon \sum_{d \in \NN} (|d| 2^{-d})^\delta \sum_{I \in \Part} (\kappa_\epsilon(I))^\delta + \sum_{d \in \NN} 2^{-\delta d} \sum_{I \in \Part} ((1+\log_+|I|) \kappa_\epsilon(I))^\delta < \infty;
\end{split}\]
in particular, by \eqref{eq:incl_llogl}, 
\[
((1+|d|+\log_+|I|) 2^{-d} \kappa_\epsilon(I) )_{(d,I) \in \NN \times \Part} \in \ell \log \ell(\NN \times \Part).
\]
Thus again, by \eqref{eq:T1_approx}, \eqref{eq:second_decomposition}, \eqref{eq:llogl_summable_estimate} and Lemma \ref{lem:wtsum}\ref{en:wtsum_llogl},
\[\begin{split}
\| T_1 f \|_{L^{1,\infty}(\RR^n \times \ZZ)} 
&= \left\|\sum_{h \geq k+1} \frac{\sum_{I \in \Part, \, d \in \NN} G_{I,d,h}(x)}{h-k}\right\|_{L^{1,\infty}(\RR^n_{x} \times \ZZ_{k})} \\
&\lesssim_\epsilon \| f\|_{L^1(\RR^n \times \RR)}.
\end{split}\]
As bounded and compactly supported functions $f$ are dense in $L^1(\RR^n \times \RR)$, this shows that $T_1$ is of weak type $(1,1)$, as desired.
\end{proof}

\section{Hardy space unboundedness}\label{s:hardy}

Here we prove the following negative result, extending the analogous one in \cite[Theorems 4.2 and 5.2]{SV} for the case $n=2$. Our proof below appears to be somewhat shorter than the proofs in \cite{SV}, but of course here we can rely on the various reductions and asymptotics established above.

\begin{prp}
For $j=0,\dots,n$, the adjoint Riesz transform $\Rz_j^*$ is not bounded from $H^1(G)$ to $L^1(G)$.
\end{prp}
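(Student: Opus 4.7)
The plan is to use the reductions already established. By Corollary \ref{cor:full_reduction}, combined with the known $H^1(G)\to L^1(G)$ boundedness of each $\Rz_j$ (from \cite{HS,V,MV}), it suffices to show that the convolution operator with kernel $K_j$, for $j=1,\dots,n$, or $\tilde K_0 + K_0$, for $j=0$, fails to be bounded from $H^1(G)$ to $L^1(G)$. To do so, I would produce a single $H^1(G)$-atom $a$ whose image under the relevant convolution operator is not in $L^1(G)$ at all.

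For the candidate atom, take $\phi \in C^\infty_c(\RR^n)$ and $\psi \in C^\infty_c(\RR)$ smooth nonnegative bumps supported in small balls around the respective origins, let $u_0 > 0$ be small, and set
\[
a(x,u) = c \, \phi(x) \, [\psi(u) - \psi(u-u_0)].
\]
Then $\int a = 0$, and by the distance estimate \eqref{eq:small_distance}, for sufficiently small support parameters the set $\mathrm{supp}(a)$ is contained in a ball of $G$ of radius bounded by a fixed absolute constant, so (with an appropriate normalization $c$) $a$ is a legitimate atom of the atomic Hardy space $H^1(G)$ of \cite{V}.

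Next I would compute the asymptotic behaviour of $(a*K_j)(y,v)$ as $v\to-\infty$. Substituting $w = v-u'$ in the convolution formula \eqref{eq:convolution} for the kernel $K_j$ from Proposition \ref{prp:kernel_reduction}, one gets, for $v \leq -2$,
\[
(a*K_j)(y,v) = \int_{\RR} \frac{F_a(y,w)}{v-w}\,dw, \qquad F_a(y,w) := (a(\cdot,w) *_{\RR^n} (r_j)_{(e^w)})(y),
\]
with $F_a(y,\cdot)$ of compact support, thanks to the support of $\psi$. For $\tilde K_0+K_0$ the same formula holds with $r_0$ in place of $r_j$: when $v\leq -2$ only the $\tilde K_0$ part restricted to $\{u'\leq -1\}$ contributes, because the other parts are supported in $\{u'\geq 1\}$, i.e., $\{w \leq v-1\}$, which is disjoint from $\mathrm{supp}(F_a(y,\cdot))$. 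Expanding $1/(v-w) = 1/v + O(|v|^{-2})$ uniformly in $w$ gives $(a*K_j)(y,v) = g(y)/v + O(|v|^{-2})$, where
\[
g(y) = \int_{\RR} F_a(y,w)\,dw, \qquad \hat g(\xi) = \hat\phi(\xi) \int_{\RR} [\psi(w) - \psi(w-u_0)] \, \hat r_j(e^w \xi)\,dw.
\]
Using $\hat r_j(\xi) \sim -iC_j'\xi_j$ near $\xi=0$ for $j \geq 1$ (where $C_j' = \int x_j^2 (1+|x|^2)^{-1-n/2}\,dx > 0$), or a second-order Taylor expansion of $\hat r_0$ at the origin for $j=0$, a direct calculation gives $\hat g \not\equiv 0$, hence $\|g\|_{L^1(\RR^n)} > 0$.

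Consequently
\[
\|a*K_j\|_{L^1(G)} \geq \int_{-\infty}^{-T} \int_{\RR^n} |(a*K_j)(y,v)| \, dy \, dv \gtrsim \|g\|_{L^1(\RR^n)} \int_{-\infty}^{-T} \frac{dv}{|v|} = \infty
\]
for $T$ large enough, contradicting any hypothetical $H^1(G) \to L^1(G)$ boundedness. The hard part will be the verification that $g \not\equiv 0$: this is somewhat delicate for $j=0$ because $\hat r_0(0) = \int r_0 > 0$ produces a non-trivial ``leading order'' that nevertheless cancels (consistently with $\int g = 0$ forced by $\int a = 0$), so one has to pass to the second-order term in the Taylor expansion of $\hat r_0$ and exploit the nondegeneracy of its Hessian at the origin.
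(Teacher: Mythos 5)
Your high-level strategy is genuinely different from the paper's, and the overall idea is sound: both you and the paper reduce via Corollary \ref{cor:full_reduction} to the kernels $K_j$ (resp. $\tilde K_0+K_0$), but whereas the paper uses an atom formed by a \emph{difference of $x$-translates} $a_v(x,u)=(\phi(x+v)-\phi(x))\psi(u)$ and extracts a one-signed pointwise lower bound $a_{e_j}*K_j(x,u)\gtrsim (1+|x|^2)^{-1-n/2}|u|^{-1}$ on a fixed region $U_j\times(-\infty,-2]$ by elementary estimates on $\nabla r_j$, you instead use a \emph{difference of $u$-translates}, expand $1/(v-w)=1/v+O(|v|^{-2})$, and reduce to showing $g\not\equiv0$ via the Fourier transform of $x$. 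Both designs produce an atom with integral zero whose image decays only like $1/|u|$ in the $u$-direction; the paper's argument is more elementary (no Fourier analysis, just sign considerations), while yours would, if completed, be conceptually cleaner in the step that identifies the non-integrable tail.

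However, there is a real gap in your verification that $g\not\equiv0$. You write $\hat r_j(\xi)\sim -iC_j'\xi_j$ near $\xi=0$ with $C_j'=\int x_j^2(1+|x|^2)^{-1-n/2}\,dx$, but this integral \emph{diverges}: the integrand is $\simeq |x|^{-n}$ at infinity. Indeed $r_j\in S^{-(n+1)}(\RR^n)$, so by the paper's Lemma \ref{lem:symb_ft} the transform $\hat r_j$ is $\delta$-H\"older for $\delta<1$ but not $C^1$ at the origin, and the true behaviour near $0$ carries a logarithmic correction (roughly $\hat r_j(\xi)\sim -iC\,\xi_j\log(1/|\xi|)$). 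Similarly for $j=0$: $r_0\in S^{-(n+2)}(\RR^n)$, so $x^\alpha r_0\notin L^1$ for $|\alpha|=2$ and $\hat r_0$ is not $C^2$, so ``the second-order term in the Taylor expansion of $\hat r_0$'' and ``the nondegeneracy of its Hessian'' are not available as stated. You would need to replace this with a correct argument — for instance, observe that for each unit vector $\omega$ the function $s\mapsto\hat r_j(e^{-s}\omega)$ lies in $L^1(\RR)$ (by the bound $|\hat r_j(\xi)|\lesssim\min\{|\xi|^\epsilon,|\xi|^{-\epsilon}\}$ from Proposition \ref{prp:fmf}) and vanishes at $\pm\infty$; if $\int\Psi(w)\hat r_j(e^{w}\xi)\,dw\equiv0$ with $\Psi=\psi-\tau_{u_0}\psi$, then since $\hat\Psi(\mu)=(1-e^{-i\mu u_0})\hat\psi(\mu)$ has only isolated zeros, the (continuous) Fourier transform in $s$ of $\hat r_j(e^{-s}\omega)$ would vanish, forcing $\hat r_j\equiv0$ on the ray, and hence $r_j=0$, a contradiction. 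With such a correction your proof goes through, but as written the key nonvanishing claim rests on a Taylor expansion that does not hold.
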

\begin{proof}
In light of Corollary \ref{cor:full_reduction}, for $j=1,\dots,n$, in order to show that $\Rz_j^*$ is not bounded from $H^1(G)$ to $L^1(G)$, it is enough to prove the same for the convolution operator with kernel $K_j$. To this purpose, we will exhibit (multiples of) atoms $a_j$ of $H^1(G)$ such that $a_j * K_j \notin L^1(G)$.

Let $\phi \in C^\infty_c(\RR^n)$ and $\psi \in C^\infty_c(\RR)$ be nontrival nonnegative cutoffs, and let us consider, for any $v \in \RR^n$,
\[
a_v(x,u) = (\phi(x+v) - \phi(x)) \psi(u).
\]
Clearly $a_v$ is bounded and compactly supported, and $\int_G a_v = 0$, thus $a_v$ is a multiple of an $H^1(G)$ atom (see, e.g., \cite[Definitions 3.4 and 3.10]{MOV}). Moreover, by \eqref{eq:conv_kernel} and Proposition \ref{prp:kernel_reduction},
\begin{equation}\label{eq:atom_conv}
\begin{split}
&a_v * K_j(x,u) \\
&= \int_G a_v(x-x',u-u') (K_j)_{(e^{u-u'})}(x',u') \,dx' \,du' \\
&= \int_G \phi(x-x') \psi(u-u') [(r_j)_{(e^{u-u'})}(x'+v) - (r_j)_{(e^{u-u'})}(x')]  \frac{\chi_{\{u' \leq -1\}}}{u'} \,dx' \,du'.
\end{split}
\end{equation}

Now, by the Fundamental Theorem of Calculus,
\begin{equation}\label{eq:lagrange}
(r_j)_{(\lambda)}(y+v)-(r_j)_{(\lambda)}(y) = \lambda^{-1} \int_0^1 (v \cdot \nabla r_j)_{(\lambda)} (y + t v) \,dt,
\end{equation}
and moreover
\[
\nabla r_j(x) = (1+|x|^2)^{-1-n/2} e_j - (2+n) x_j (1+|x|^2)^{-2-n/2} x,
\]
thus
\[
e_j \cdot \nabla r_j(x) = (1+|x|^2)^{-1-n/2}  - (2+n) x_j^2 (1+|x|^2)^{-2-n/2}.
\]
In particular, if $x$ is in the region $V_j \subseteq \RR^n$ where $x_j^2 \geq (1+|x|^2)/2$, then
\begin{equation}\label{eq:der_rj}
e_j \cdot \nabla r_j(x) \leq -(n/2) (1+|x|^2)^{-1-n/2}.
\end{equation}
Clearly, we can find a nonnegligible region $U_j \subseteq \RR^n$ such that, if $x \in U_j$, $|x-x'| \leq 2$ and $|u| \leq 1$, then $e^u x' \in V_j$;
for example, one can take $U_j = \{ x \in \RR^n \tc |x_j| \geq 100, \, |x|^2 - x_j^2 \leq 1 \}$.
 As a consequence, if $x \in U_j$, $|x-x'| \leq 1$ and $|u| \leq 1$, then, by \eqref{eq:lagrange} and \eqref{eq:der_rj},
\[
-[(r_j)_{(e^u)}(x'+e_j) - (r_j)_{(e^u)}(x')] \gtrsim (1+|x|^2)^{-1-n/2}.
\]
Thus, if we choose $\phi$ and $\psi$ supported in balls of radius $1$ centred at the origin, and we take $x \in U_j$ and $u \leq -2$, then, by \eqref{eq:atom_conv},
\[
a_{e_j} * K_j(x,u) \gtrsim (1+|x|^2)^{-1-n/2} |u|^{-1},
\]
and clearly the right-hand side is not integrable on $U_j \times (-\infty,-2]$. We can then take $a_j = a_{e_j}$ to get the desired result.

The argument for $\Rz_0^*$ is analogous. Here, by Corollary \ref{cor:full_reduction}, it is enough to find a multiple $a_0$ of an $H^1(G)$-atom such that $a_0 * (\tilde K_0 + K_0) \notin L^1(G)$. If we take $a_0 = a_v$ as before, with $\phi,\psi$ supported in centred balls of radius $1$, then from \eqref{eq:conv_kernel} we deduce that $a_v * K_0(x,u) = 0$ whenever $u \leq -2$, as $K_0$ is supported in the region where $u \geq 1$. Thus, by restricting to $u \leq -2$, we have
\[\begin{split}
&a_v * (\tilde K_0 + K_0)(x,u) \\
&=a_v * \tilde K_0(x,u) \\
&= \int_G \phi(x-x') \psi(u-u') [(r_0)_{(e^{u-u'})}(x'+v) - (r_0)_{(e^{u-u'})}(x')]  \frac{\chi_{\{u' \leq -1\}}}{u'} \,dx' \,du'.
\end{split}\]
So we can proceed with the same argument as before with $r_j$ replaced by $r_0$. As
\[
\nabla r_0(x) = -(2+n) (1+|x|^2)^{-2-n/2} x,
\]
here one can just take as $v$ any unit vector, and as $V_0 \subseteq \RR^n$ the region where $x \cdot v \geq \sqrt{(1+|x|^2)/2}$, in order to show that
\[
a_v * (\tilde K_0 + K_0)(x,u) \gtrsim (1+|x|^2)^{-3/2-n/2} |u|^{-1}
\]
on a region of the form $U_0 \times (-\infty,-2]$, and conclude that $a_v * (\tilde K_0 + K_0)$ is not integrable.
\end{proof}

\section{Representation theory of \texorpdfstring{$G$}{G} and Riesz transforms for a one-dimensional Schr\"odinger operator}\label{s:repn}

The following proposition collects a few basic results about the representation theory of the group $G$ (cf., e.g., \cite{Kh,Hu,ET} and \cite[Section 6.7]{F} for the case $n=1$).
Here, for a unitary representation $\pi$ of $G$ on a Hilbert space $H$ and a function $F \in L^1(G)$, we write $\pi(F) = \int_G F(x,u) \,\pi((x,u)^{-1}) \,dx \,du$ for the bounded operator on $H$ corresponding to $F$ via $\pi$.

\begin{prp}\label{prp:irrep}
Let $\xi \in \RR^n \setminus \{0\}$.
\begin{enumerate}[label=(\roman*)]
\item\label{en:irrep_formula} The formula
\[
\sigma^\xi(x,u) \phi(s) = e^{i e^{s} \xi \cdot x} \phi(s+u)
\]
defines a unitary representation of $G$ on $L^2(\RR)$, which also acts by isometries on $L^p(\RR)$ for all $p \in [1,\infty]$.
\item\label{en:irrep_transl} If $\tau_v \phi(s) = \phi(s-v)$ denotes the translation operator by $v$ on $L^2(\RR)$, then
\[
\sigma^{e^v \xi} = \tau_{-v} \sigma^{\xi} \tau_{v}.
\]
\item\label{en:irrep_integralop} For all $K \in L^1(G)$,
\[
\sigma^\xi(K) = M_K(\xi),
\]
where $M_K(\xi)$ is the integral operator with kernel $H_K^\xi$ defined in \eqref{eq:mult_kernel}.
\item\label{en:irrep_transf} For all $K \in L^1(G)$ and $p \in [1,\infty]$,
\[
\|M_K(\xi)\|_{L^p(\RR) \to L^p(\RR)} \leq \|K\|_{Cv^p(G)},
\]
and in particular
\[
\|M_K(\xi)\|_{L^p(\RR) \to L^p(\RR)} \leq \|K\|_{L^1(G)}.
\]
\end{enumerate}
\end{prp}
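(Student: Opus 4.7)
Parts (i)--(iii) are direct bookkeeping from the definitions; the only step requiring a substantive tool is the $Cv^p(G)$ bound in (iv), which is a transference statement.

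For (i), the multiplicativity $\sigma^\xi((x,u)(x',u')) = \sigma^\xi(x,u)\sigma^\xi(x',u')$ reduces, via the identity $e^s\xi\cdot(x+e^u x') = e^s\xi\cdot x + e^{s+u}\xi\cdot x'$, to the group law $(x,u)\cdot(x',u') = (x+e^u x', u+u')$. Unitarity on $L^2(\RR)$ and isometry on $L^p(\RR)$ follow from $|\sigma^\xi(x,u)\phi(s)| = |\phi(s+u)|$ together with the fact that translations are $L^p$-isometric, and strong continuity is standard. Part (ii) is a one-line unfolding: both sides applied to $\phi$ produce $e^{ie^{s+v}\xi\cdot x}\phi(s+u)$.

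For (iii), I would substitute $(x,u)^{-1} = (-e^{-u}x,-u)$ in the defining integral for $\sigma^\xi(K)\phi(s)$, obtaining
\[\sigma^\xi(K)\phi(s) = \int_G K(x,u)\,e^{-ie^{s-u}\xi\cdot x}\,\phi(s-u)\,dx\,du,\]
and then carry out the change of variable $u' = s-u$; the inner $x$-integral becomes the partial Fourier transform $(\Four K)(e^{u'}\xi,\,s-u')$, which is exactly the kernel $H_K^\xi$ from \eqref{eq:mult_kernel}.

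For (iv), the $L^1(G)$ bound is immediate from (i) and (iii) by Minkowski's integral inequality applied to the vector-valued integral defining $\sigma^\xi(K)$, using that $\sigma^\xi(g)$ is an isometry. The $Cv^p(G)$ bound is the main obstacle: it asserts that the operator norm of $\sigma^\xi(K)$ on $L^p(\RR)$ is dominated by the convolution norm on $L^p(G)$, which is a Coifman--Weiss-style transference statement. Since $G$ is solvable, hence amenable, and $\sigma^\xi$ acts isometrically on $L^p(\RR)$, such transference is available. I would make it concrete as follows: after reducing by density to compactly supported $K$ and $\phi$, test against $f_N(x,u) = N^{-n/p}\chi_{\Omega_N}(x)\,e^{i\xi\cdot x}\,\phi(u)$ with $\Omega_N \subset \RR^n$ a centred cube of side $N$, so that $\|f_N\|_{L^p(G)} = \|\phi\|_{L^p(\RR)}$. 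A direct application of the convolution formula \eqref{eq:convolution}, together with the computation from part (iii), shows that for $(x,u)$ in an interior sub-cube $\Omega_N' \times [-T,T]$ (with $T$ and $N - \operatorname{diam}(\Omega_N')$ depending only on $\operatorname{supp} K$ and $\operatorname{supp}\phi$), one has $(f_N*K)(x,u) = N^{-n/p}e^{i\xi\cdot x}[M_K(\xi)\phi](u)$ exactly. Combining $|\Omega_N'|/|\Omega_N| \to 1$ with the inequality $\|f_N*K\|_{L^p(G)} \leq \|K\|_{Cv^p(G)}\|f_N\|_{L^p(G)}$ and letting $N\to\infty$ yields $\|M_K(\xi)\phi\|_{L^p(\RR)} \leq \|K\|_{Cv^p(G)}\|\phi\|_{L^p(\RR)}$. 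The only technical point is the negligibility of the boundary layer of $\Omega_N$ where the character structure is disrupted by the cutoff, and this is absorbed precisely by the Følner-type property of Euclidean cubes underlying the amenability of $\RR^n$.
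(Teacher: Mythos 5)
Your proof is correct, and parts (i)--(iii) and the $L^1$ bound in (iv) follow the same elementary bookkeeping as the paper's proof. For the $Cv^p(G)$ bound in (iv), where the paper simply cites the Coifman--Weiss transference theorem (\cite{CW}, \cite{BPW}) together with amenability of the solvable group $G$, you unwind that citation into a concrete F{\o}lner-style argument. Your version is actually a bit sharper than a naive appeal to the general theorem would suggest: because the representation $\sigma^\xi$ is induced from a character of the normal subgroup $\RR^n$ and acts on the $u$-variable purely by translation, the test functions $f_N(x,u) = N^{-n/p}\chi_{\Omega_N}(x)e^{i\xi\cdot x}\phi(u)$ need a F{\o}lner cutoff only in the $x$-variable --- the $\phi$-factor already supplies finite $L^p$-mass in $u$ --- so that in effect you only need F{\o}lner cubes in $\RR^n$ rather than a F{\o}lner sequence for the full group $G$. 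The identity $(f_N * K)(x,u) = N^{-n/p}e^{i\xi\cdot x}[M_K(\xi)\phi](u)$ on the interior subcube $\Omega_N' \times [-T,T]$ is a correct unwinding of \eqref{eq:convolution} and part (iii), provided $T$ is chosen large enough (in terms of the $u$-supports of $K$ and $\phi$) so that $M_K(\xi)\phi$ is supported in $[-T,T]$ and the dilation factors $e^{u-u'}$ stay bounded; letting $|\Omega_N'|/|\Omega_N| \to 1$ gives the desired estimate for $p \in (1,\infty)$, while $p = 1,\infty$ reduce to the $L^1(G)$ bound since $\|K\|_{Cv^p(G)} = \|K\|_{L^1(G)}$ there. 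This is a worthwhile self-contained alternative to the citation, with the small caveat that you should be explicit about the density reduction to compactly supported $K$ and $\phi$ before invoking the weak-$*$ integral for the Bochner integral $\sigma^\xi(K)$.
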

\begin{proof}
It is readily checked that $\sigma^\xi$ is a representation of $G$, and it is evident from the given expression that it acts isometrically on $L^p(\RR)$ for any $p \in [1,\infty]$; this proves part \ref{en:irrep_formula}. It is similarly easily checked that translations intertwine representations $\sigma^\xi$ with parameters that are positive multiples of each other, as described in part \ref{en:irrep_transl}.

Now, if $K \in L^1(G)$, then, for all $\phi \in L^2(\RR)$,
\[\begin{split}
\sigma^\xi(K) \phi(s)
&= \int_G K(x,u) \sigma^\xi((x,u)^{-1}) f(s) \,dx \,du \\
&= \int_\RR \int_{\RR^n} K(x,u) e^{-ie^{s-u} \xi \cdot x} \phi(s-u) \,dx \,du \\
&= \int_\RR \int_{\RR^n} K(x,s-s') e^{-ie^{s'} \xi \cdot x} \phi(s') \,dx \,ds' \\
&= \int_\RR \Four K(e^{s'} \xi,s-s') \phi(s') \,ds',
\end{split}\]
where $\Four$ is the partial Fourier transform in the variable $x$. Comparing the above formula with \eqref{eq:mult_kernel} proves part \ref{en:irrep_integralop}. 

In light of the identity in part \ref{en:irrep_integralop}, the second estimate in part \ref{en:irrep_transf} is an immediate consequence of the fact that $\sigma^\xi$ acts isometrically on $L^p(\RR)$. The first, more precise estimate follows from the transference principle (see, e.g., \cite[Theorem 2.4]{CW} or \cite[Theorem 2.7]{BPW}), as $G$ is an amenable group since it is solvable (see \cite[Corollary 13.5]{P}). 
\end{proof}

\begin{rem}\label{rem:mackey}
The unitary representations $\sigma^\xi$ (where $\xi \in \RR^n \setminus \{0\}$) on $L^2(\RR)$ introduced in Proposition \ref{prp:irrep} are nothing else than the representations of $G$ induced by the nontrivial characters $x \mapsto e^{i \xi \cdot x}$ of the normal subgroup $\RR^n$. Another family of unitary representations of $G$ is obtained by lifting to $G$ the characters of $\RR = G / \RR^n$, i.e., is given by the characters $\chi_\mu(x,u) = e^{i \mu u}$ (where $\mu \in \RR$) of $G$. By the ``Mackey machine'' (see, e.g., \cite[Theorem 6.43]{F}) all these representations are irreducible and any irreducible unitary representation of $G$ is equivalent to one of these; moreover, the only nontrivial equivalences between these representations are those described in Proposition \ref{prp:irrep}\ref{en:irrep_transl}. This corresponds to the fact that the coadjoint action of $G$ on (the Pontryagin dual of) $\RR^n$ factors through the normal abelian subgroup $\RR^n$ and corresponds to the action by dilations $\xi \mapsto e^u \xi$ of $\RR_u$ on $\RR^n_\xi$; the orbits are therefore $\{0\}$ and the half-lines $\RR_+ \xi$ for $\xi \in \RR^n \setminus \{0\}$.
\end{rem}

\begin{rem}\label{rem:fourier_multiplier}
By Remark \ref{rem:mackey}, the unitary representations $\sigma^\xi$ for $\xi \in \RR^n \setminus \{0\}$ exhaust (up to equivalence) all the infinite-dimensional irreducible representations of $G$. We can therefore think of the mapping $K \mapsto (\sigma^\xi(K))_{\xi \in \RR^n \setminus \{0\}}$ as a ``concrete'' realisation of the group Fourier transform on $G$ (where we neglect one-dimensional representations and allow for some redundancy in the parametrisation of the infinite-dimensional ones).
The identity $\sigma^\xi(K) = M_K(\xi)$ discussed in Proposition \ref{prp:irrep}\ref{en:irrep_integralop} above thus yields a potentially suggestive interpretation of Proposition \ref{prp:opval_fourier_multiplier}: namely, Proposition \ref{prp:opval_fourier_multiplier} can be thought of as an $L^p$ Fourier multiplier theorem for the group Fourier transform of $G$, where $L^p(G)$-boundedness properties of $f \mapsto f * K$ are deduced from suitable boundedness properties of the ``symbol'' $\xi \mapsto \sigma^\xi(K)$ and its derivatives.
In these respects, Proposition \ref{prp:opval_fourier_multiplier} can be compared to results of a similar flavour appeared in the literature for other Lie groups (see, e.g., \cite{CW0,DeMa,FiRu} and references therein).
\end{rem}

In what follows, we write $\Sz(G)$ for the usual Schwartz class on $\RR^n \times \RR$ and $\Sz'(G)$ for the corresponding dual space of tempered distributions.

\begin{lem}\label{lem:partiallytempered}
$Cv^2(G) \subseteq \Sz'(G)$. Moreover, if a sequence $K_n$ in $Cv^2(G)$ converges to $K \in Cv^2(G)$ with respect to the weak operator topology on $L^2(G)$, then $K_n \to K$ in $\Sz'(G)$ too.
\end{lem}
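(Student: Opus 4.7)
My plan is to realise any $K \in Cv^2(G)$ as a tempered distribution by expressing the pairing $\langle K, \chi\rangle$ on Schwartz test functions as a finite sum of $L^2$-matrix elements of the operator $T_K \colon f \mapsto f * K$; both conclusions of the lemma will then follow at once from the $L^2$-boundedness and the weak operator convergence properties of the $T_{K_n}$. The computational starting point is the identity
\[
\int_G (f * K)(x) \, \phi(x) \, dx
= \langle T_K f, \bar\phi \rangle_{L^2(G)}
= \langle K, F_{f,\phi}\rangle,
\qquad
F_{f,\phi}(g) := \int_G f(zg^{-1}) \, \phi(z) \, dz,
\]
valid for $f, \phi \in C_c^\infty(G)$; it follows from \eqref{eq:convolution} by Fubini and the right-invariance of the Haar measure. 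In particular
\[
|\langle K, F_{f,\phi}\rangle| \leq \|K\|_{Cv^2(G)} \, \|f\|_{L^2(G)} \, \|\phi\|_{L^2(G)},
\]
and a short inspection of the semidirect product law $yg = (y_1 + e^{y_2} g_1, y_2 + g_2)$ shows that $F_{f,\phi} \in \Sz(G)$ whenever $f, \phi \in \Sz(G)$, with continuous bilinear dependence in the Schwartz topology.

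\textbf{From $C_c^\infty$ to $\Sz$.} By the Dixmier--Malliavin theorem (already invoked in Lemma \ref{lem:cvp_module_cinfty}), any $\chi \in C_c^\infty(G)$ is a finite sum of group convolutions of $C_c^\infty$ functions, which, after the involutions implicit in the map $(f, \phi) \mapsto F_{f, \phi}$, yields a decomposition $\chi = \sum_{i=1}^N F_{f_i, \phi_i}$ with $f_i, \phi_i \in C_c^\infty(G)$; correspondingly $\langle K, \chi\rangle = \sum_i \langle T_K f_i, \bar\phi_i\rangle_{L^2}$. The key remaining step is to upgrade this to a Schwartz-class decomposition: every $\chi \in \Sz(G)$ should admit a finite representation $\chi = \sum_i F_{f_i, \phi_i}$ with $f_i, \phi_i \in \Sz(G)$ and $\sum_i \|f_i\|_{L^2} \|\phi_i\|_{L^2}$ dominated by some Schwartz seminorm of $\chi$. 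Granted this, the prescription $\langle K, \chi\rangle := \sum_i \langle T_K f_i, \bar\phi_i\rangle_{L^2}$ extends the distributional pairing to a continuous linear functional on $\Sz(G)$, giving $K \in \Sz'(G)$. The convergence assertion then follows immediately: if $K_n \to K$ in the weak operator topology on $L^2(G)$, then $\langle T_{K_n} f_i, \bar\phi_i\rangle_{L^2} \to \langle T_K f_i, \bar\phi_i\rangle_{L^2}$ for each $i$, and summing gives $\langle K_n, \chi\rangle \to \langle K, \chi\rangle$ for every $\chi \in \Sz(G)$.

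\textbf{Main obstacle.} The delicate point is the Schwartz-class Dixmier--Malliavin decomposition. The classical version depends on the strict inductive-limit structure of $C_c^\infty$ and does not transfer directly to the Fr\'echet space $\Sz(G)$; moreover, as emphasised earlier in the paper, $\opL$ has exponentially growing coefficients and does not preserve $\Sz(G)$, blocking the most direct smoothing strategy based on the functional calculus of $\opL$. I would work around this by using instead the standard Euclidean heat semigroup on $\RR^{n+1}$, whose kernels are Schwartz, to produce the required decomposition, and then transferring between Euclidean and group convolution via the explicit semidirect product structure. Once this technical point is in place, the remainder of the proof is the straightforward bookkeeping outlined above.
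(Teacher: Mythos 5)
There is a genuine gap, and it is exactly the one you flagged. The paper closes it, but not by the route you sketch; and you also missed a much cheaper proof of the first assertion.

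For the inclusion $Cv^2(G) \subseteq \Sz'(G)$, no decomposition is needed at all. The paper observes that $\Sz(G)$ (with the Euclidean Schwartz topology on $\RR^{n+1}$) is preserved by left $G$-translations and by right-invariant differential operators (which have polynomial coefficients). Hence for $f \in \Sz(G)$ and $K \in Cv^2(G)$, the function $(Df)*K^*$ lies in $L^2(G)$ for every right-invariant differential operator $D$, and Euclidean Sobolev embedding then gives directly
\[
|\langle f, K\rangle| = |(f*K^*)(e)| \lesssim \sum_D \|(Df)*K^*\|_2 \lesssim \sum_D \|Df\|_2,
\]
with the finite sum ranging over a fixed family of $D$'s. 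That is a continuous functional on $\Sz(G)$, so $K \in \Sz'(G)$. Your $F_{f,\phi}$-based approach would in principle also give this, but only \emph{after} the decomposition you flag, so it is strictly more work.

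For the convergence assertion, your worry that the Dixmier--Malliavin machinery ``depends on the strict inductive-limit structure of $C_c^\infty$ and does not transfer directly to the Fr\'echet space $\Sz(G)$'' points at the wrong statement in \cite{DM}. The paper applies \cite[Th\'eor\`eme 3.3]{DM}, which is formulated precisely for continuous representations of a Lie group on a \emph{Fr\'echet} space in which every vector is smooth. Left translation on $\Sz(G)$ is such a representation (smoothness of all vectors follows from invariance of $\Sz(G)$ under right-invariant differential operators, which are the infinitesimal generators of left translation). This yields, for any $\phi \in \Sz(G)$, a finite decomposition $\phi = \sum_j \psi_j * \phi_j$ with $\psi_j \in C^\infty_c(G)$ and $\phi_j \in \Sz(G)$, and then
\[
\langle K_n, \phi\rangle = \sum_j \langle K_n, \psi_j * \phi_j\rangle = \sum_j \langle \overline{\check\psi_j} * K_n, \phi_j\rangle,
\]
where all factors other than $K_n$ are in $L^2(G)$, so weak operator convergence on $L^2(G)$ carries through. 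Note that this is exactly the decomposition you wanted (your $F_{f,\phi}$ equals $\check f * \phi$, so $\psi_j * \phi_j = F_{\check\psi_j, \phi_j}$), except that the left factor need only be $C^\infty_c$, not Schwartz. So the missing lemma you flagged is simply a citation, once you look at the right theorem in \cite{DM}; your proposed detour through the Euclidean heat semigroup on $\RR^{n+1}$ is unnecessary and, as written, is too vague to evaluate (it is not clear how Euclidean convolution would produce the group-convolution factorization you need, since the two operations do not intertwine).

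In summary: your structural outline is sound and your identity $\langle K, F_{f,\phi}\rangle = \langle T_K f, \bar\phi\rangle_{L^2}$ is correct and is indeed the mechanism that makes weak operator convergence usable, but the proof as written has a real hole at the factorization step and an unconvincing patch. The paper fills it with \cite[Th\'eor\`eme 3.3]{DM} applied to the Fr\'echet representation, and handles the tempered-distribution inclusion separately and more simply via Sobolev embedding.
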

\begin{proof}
Of course $\Sz(G) \subseteq L^2(G)$, as we are using the standard Lebesgue measure $dx \,du$ on $G$. Additionally, from \eqref{eq:operation} it is not difficult to see that $\Sz(G)$ is invariant under left translations (indeed, left $G$-translations acts as affine linear maps on $\RR^n \times \RR$). Moreover, the right-invariant vector fields corresponding to \eqref{eq:vfs} are
\[
X_0^r = \partial_u + x \cdot \nabla_x, \quad X_1^r = \partial_{x_1}, \quad \dots \quad, X_n^r = \partial_{x_n},
\]
thus $\Sz(G)$ is clearly invariant under right-invariant differential operators.

Let $K \in Cv^2(G)$. 
Now, for any $f \in \Sz(G)$ and right-invariant differential operator $D$ on $G$, we have $Df \in \Sz(G) \subseteq L^2(G)$, thus also $D(f*K^*) = (Df)* K^* \in L^2(G)$, because $K^* \in Cv^2(G)$. By Sobolev's embedding we conclude that $f*K^*$ is smooth, and moreover, if $e = (0,0)$ is the identity element, then we can bound
\[
|\langle f , K \rangle| = |\langle f * K^* , \delta_e \rangle| = |(f*K^*)(e)| \lesssim \sum_{D} \| (Df) * K^* \|_2 \lesssim \sum_{D} \| Df \|_2 ,
\]
where the sum is extended over a suitable finite family (independent of $f$) of right-invariant differential operators $D$. The above estimate shows that $K$ indeed defines a bounded linear functional on $\Sz(G)$, i.e., $K \in \Sz'(G)$.

By the discussion at the beginning of the proof, we have a continuous representation of $G$ on the Frech\'et space $\Sz(G)$, where $G$ acts by left translations, and moreover all elements of $\Sz(G)$ are smooth vectors for this representation, as $\Sz(G)$ is invariant by right-invariant differential operators. If we apply \cite[Th\'eor\`eme 3.3]{DM} to this representation, then we conclude that any $\phi \in \Sz(G)$ can be written as a finite sum of the form
\begin{equation}\label{eq:DMdec}
\phi = \sum_j \psi_j * \phi_j,
\end{equation}
where $\psi_j \in C^\infty_c(G)$ and $\phi_j \in \Sz(G)$.

Let now $K_n \in Cv^2(G)$ be a sequence converging to $K \in Cv^2(G)$ in the sense of the weak operator topology on $L^2(G)$. Let $\phi \in \Sz(G)$, and decompose $\phi$ as in \eqref{eq:DMdec}. Then
\begin{equation}\label{eq:duality_dec}
\langle K_n , \phi \rangle = \sum_j \langle K_n , \psi_j * \phi_j \rangle = \sum_j \langle \overline{\check\psi_j} * K_n , \phi_j \rangle,
\end{equation}
where $\check\psi_j(x,u) = \psi_j((x,u)^{-1}) = \psi_j(-e^{-u} x,-u)$. So $\overline{\check\psi_j} \in C^\infty_c(G)$ and therefore both $\overline{\check\psi_j},\phi_j \in L^2(G)$. As $K_n \to K$ in the sense of weak operator convergence on $L^2(G)$, when we pass to the limit in the right-hand side of \eqref{eq:duality_dec} we replace $K_n$ with $K$, thus we conclude that $\langle K_n , \phi \rangle \to \langle K, \phi \rangle$. As $\phi \in \Sz(G)$ was arbitrary, this shows that $K_n \to K$ in $\Sz'(G)$.
\end{proof}

\begin{rem}
Observe that $\Sz(G)$ and $\Sz'(G)$ are not invariant under right translations, nor under the involution $f \mapsto f^*$. Nevertheless $Cv^2(G)$ is.
\end{rem}

We now show that, at least in the smallest dimensional case $n=1$, the correspondence $K \mapsto (M_K(\xi))_\xi$ discussed in Proposition \ref{prp:irrep} can be extended to the case where $K$ is a distribution in $Cv^2(G)$. This result should be compared with, e.g., \cite[Section 4]{Kh} and \cite[Section 5]{ET}.

\begin{lem}\label{lem:gft_cv2}
Assume that $n=1$.
For any $K \in Cv^2(G)$, the partial Fourier transform $\Four$, thought of as a unitary isomorphism
\[
\Four : L^2(G) \to L^2(\RR_\xi^*; L^2(\RR_u)),
\]
intertwines the operator of convolution by $K$ on $L^2(G)$ with the direct integral
\[
\int^\oplus_{\RR^*} M_K(\xi) \,d\xi,
\]
of bounded operators on $L^2(\RR_u)$, whose Schwartz kernels $H_{K}^\xi$ are given, in the sense of distributions, by
\begin{equation}\label{eq:kernel_ft_formula}
H_K^\xi(u,u') = (\Four K)(e^{u'} \xi, u - u').
\end{equation}
Moreover
\begin{equation}\label{eq:transl_kernels}
M_K(e^v \xi) = \tau_{-v} M_K(\xi) \tau_{v}
\end{equation}
for any $v \in \RR$ and $\xi \in \RR^*$, and
\[
\|K\|_{Cv^2(G)} = \sup_{\xi \in \RR^*} \| M_K(\xi) \|_{L^2(\RR) \to L^2(\RR)} = \max_{\pm} \| M_K(\pm 1) \|_{L^2(\RR) \to L^2(\RR)}.
\]
\end{lem}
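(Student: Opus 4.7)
The plan is to verify the assertions first for $K \in L^1(G)$ by direct computation and then bootstrap to $K \in Cv^2(G)$ via approximation, using Lemma \ref{lem:partiallytempered} to pass the kernel formula to the distributional limit.

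For $K \in L^1(G)$, the calculation already carried out at the beginning of Section \ref{s:multiplier} shows that $\Four$ intertwines $f \mapsto f * K$ with the operator-valued multiplier $\xi \mapsto M_K(\xi)$, where each $M_K(\xi)$ is the integral operator with kernel $H_K^\xi$ given by \eqref{eq:kernel_ft_formula}. The translation identity \eqref{eq:transl_kernels} follows by combining Proposition \ref{prp:irrep}\ref{en:irrep_integralop}, which identifies $M_K(\xi) = \sigma^\xi(K)$, with Proposition \ref{prp:irrep}\ref{en:irrep_transl}. For the norm identity, the standard formula for the norm of a decomposable operator gives $\|K\|_{Cv^2(G)} = \esssup_\xi \|M_K(\xi)\|$; weak-operator continuity of $\xi \mapsto M_K(\xi)$ (which is immediate for $K \in L^1$) turns the essential supremum into a true supremum, and \eqref{eq:transl_kernels} forces $\|M_K(\xi)\|$ to be constant on each ray $\RR_\pm$, reducing it further to $\max_\pm \|M_K(\pm 1)\|$.

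For general $K \in Cv^2(G)$, I would approximate $K$ by a sequence $K_n \in L^1(G) \cap Cv^2(G)$ such that $\|K_n\|_{Cv^2(G)}$ stays bounded and convolution by $K_n$ converges to convolution by $K$ in the strong operator topology on $L^2(G)$. Such a sequence can be built by standard regularization: a compactly supported cutoff of $K$ via Lemma \ref{lem:cvp_module_cinfty}, combined with smoothing by two-sided convolution against a smooth approximate identity. Applying the $L^1$ case to each $K_n$ gives measurable fields $\xi \mapsto M_{K_n}(\xi)$ satisfying both the kernel formula \eqref{eq:kernel_ft_formula} and the identity \eqref{eq:transl_kernels}. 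Strong-operator convergence of the convolution operators implies weak-$*$ convergence of the symbols $M_{K_n} \to M_K$ in $L^\infty(\RR^*; B(L^2(\RR)))$, which defines $M_K(\xi)$ a.e.\ in $\xi$. Simultaneously, Lemma \ref{lem:partiallytempered} yields $K_n \to K$ in $\Sz'(G)$, hence $\Four K_n \to \Four K$ in $\Sz'(\RR \times \RR)$, so the formula \eqref{eq:kernel_ft_formula} passes to the limit distributionally. The identity \eqref{eq:transl_kernels} persists in the limit and, together with the ray-wise norm constancy it implies, yields the equalities $\|K\|_{Cv^2(G)} = \sup_\xi \|M_K(\xi)\| = \max_\pm \|M_K(\pm 1)\|$.

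The main obstacle is reconciling the a.e.-defined nature of the direct-integral decomposition of a left-invariant bounded operator on $L^2(G)$ with the genuinely pointwise statement of the lemma. The dilation intertwining \eqref{eq:transl_kernels}, required to hold for every $\xi \in \RR^*$ and $v \in \RR$, is precisely the mechanism that bridges this gap: it makes $M_K(\xi)$ unitarily conjugate to $M_K(\mathrm{sgn}(\xi))$ for every nonzero $\xi$, so that once $M_K(\pm 1)$ are chosen as distinguished representatives, the entire operator-valued field is unambiguously specified on all of $\RR^*$, and the pointwise norm identity is an immediate consequence.
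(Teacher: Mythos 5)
Your proposal takes a genuinely different route from the paper. You prove the lemma first for $K\in L^1(G)$ (by the computations of Section~\ref{s:multiplier} and by Proposition~\ref{prp:irrep}) and then try to bootstrap to general $K\in Cv^2(G)$ via approximation. The paper instead works directly with the given $K$: by Lemma~\ref{lem:partiallytempered}, $K\in\Sz'(G)$, so $\Four K$ is already a well-defined tempered distribution on $\RR^2$; for each fixed $\xi\neq0$ the map $(u,u')\mapsto(e^{u'}\xi,\,u-u')$ is a diffeomorphism onto a half-plane (this is exactly where $n=1$ is used), and so $H_K^\xi$ is well-defined distributionally and \eqref{eq:transl_kernels} holds for every $\xi$ without any limiting argument. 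The norm identity then follows from the direct-integral $\esssup$ formula and the fact that \eqref{eq:transl_kernels} makes $\|M_K(\xi)\|$ constant on each ray. No approximation is needed.

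The weak spot in your approximation scheme is the construction of $K_n\in L^1(G)\cap Cv^2(G)$ with $\sup_n\|K_n\|_{Cv^2(G)}<\infty$ and convolution by $K_n\to K$ strongly. You propose using a compactly supported cutoff $\zeta_n K$ via Lemma~\ref{lem:cvp_module_cinfty} followed by two-sided mollification. The mollification step is fine (it contracts the $Cv^2$ norm when the mollifier is a probability density), but Lemma~\ref{lem:cvp_module_cinfty} only guarantees that $\zeta K\in Cv^2(G)$ for each fixed $\zeta\in C_c^\infty(G)$; it gives no control on $\|\zeta_n K\|_{Cv^2(G)}$ as $\zeta_n\uparrow 1$, and there is no general reason for these norms to stay bounded. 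Without a uniform bound you cannot conclude strong-operator convergence from convergence on a dense subset, which is what the rest of your argument rests on. (For the specific kernels $K=k_{\Rz_j}$ the paper does produce uniformly bounded $L^1$ approximants later, in the proof of Theorem~\ref{thm:schroedinger_riesz}, but that construction uses the heat-semigroup subordination and is not available for arbitrary $K\in Cv^2(G)$.) Either supply a concrete uniformly bounded approximating sequence, or drop the approximation entirely and argue directly with $\Four K\in\Sz'$ as the paper does.

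The remainder of your plan — deriving \eqref{eq:transl_kernels} from Proposition~\ref{prp:irrep}\ref{en:irrep_transl} and \ref{en:irrep_integralop}, and using it to turn the a.e.\ direct-integral bound into the pointwise equality $\|K\|_{Cv^2(G)}=\max_\pm\|M_K(\pm1)\|$ — is sound and matches the paper's use of \eqref{eq:transl_kernels}. The observation that the dilation intertwining is what resolves the tension between the a.e.-defined direct integral and the pointwise statement is exactly the right idea; it is also the reason the paper's direct distributional argument works.
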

\begin{proof}
Recall from Lemma \ref{lem:partiallytempered} that any distribution $K \in Cv^2(G)$ is a tempered distribution. As a consequence, we can make sense of the partial Fourier transform $\Four K \in \Sz'(G)$ and of the computations at the beginning of Section \ref{s:multiplier}, which show that indeed $\Four$ intertwines the convolution operator by $K$ with the direct integral of the $M_K(\xi)$. Here the identity
from \eqref{eq:mult_kernel}, expressing the integral kernel of $M_K(\xi)$ for $\xi \in \RR^*$, can be be interpreted in the sense of distribution: indeed, if $\xi \neq 0$, then the change of variables $\RR^2 \ni (u,u') \mapsto (e^{u'} \xi, u-u') \in \RR_+ \xi \times \RR$ is nondegenerate.

As the partial Fourier transform $\Four$ intertwines the convolution operator by $K$ and the direct integral of the $M_K(\xi)$, we clearly have
\[
\|K\|_{Cv^2(G)} = \esssup_{\xi \in \RR^*} \| M_K(\xi) \|_{L^2(\RR) \to L^2(\RR)}.
\]
Now, from the formula for $H_K^\xi$ it is immediately checked that \eqref{eq:transl_kernels} holds. This shows in particular that $M_K(\xi)$ and $M_K(\xi/|\xi|)$ are intertwined by an $L^2$-isometry, so they have the same operator norm, and the required characterisation of the norm $\|K\|_{Cv^2(G)}$ follows.
\end{proof}

As a consequence of the $L^p$-boundedness of the Riesz transforms on $G$ (see Theorem \ref{thm:main} above) and the amenability of the group $G$, via transference
we can deduce
a corresponding result for the Riesz transforms associated with the Schr\"odinger operator $\opH = -\partial_s^2 + e^{2s}$ on $\RR$, stated as Theorem \ref{thm:schroedinger_riesz} above.

\begin{proof}[Proof of Theorem \ref{thm:schroedinger_riesz}]
It will be enough to work with the smallest $ax+b$-group $G = \RR_x \rtimes \RR_u$, i.e., here we take $n=1$.

As before, let $\Four$ denote the partial Fourier transform in the variable $x$ on $\RR_x \times \RR_u$. Then it is immediately checked that, for any $\xi \in \RR^*$,
\[
\Four (X_0 f)(\xi,u) = \partial_u \Four f(\xi,u), \qquad \Four (X_1 f)(\xi,u) = i \xi e^u \Four f(\xi,u),
\]
and therefore also
\[
\Four (\opL f)(\xi,u) = \opH^\xi f(\xi,u),
\]
where $\opH^\xi$ is the Schr\"odinger operator on $L^2(\RR_u)$ given by
\[
\opH^\xi = -\partial_u^2 + \xi^2 e^{2u}.
\]
Consequently, $\Four$ intertwines, in the sense of Lemma \ref{lem:gft_cv2},  the Riesz transforms $\Rz_0$ and $\Rz_1$ with the direct integrals
\[
\int_{\RR^*}^\oplus \Rz_0^\xi \,d\xi , \qquad \int_{\RR^*}^\oplus \Rz_1^\xi \,d\xi,
\]
where
\[
\Rz^\xi_{0} \defeq \partial_u (\opH^\xi)^{-1/2}, \qquad \Rz^\xi_{1} \defeq i \xi e^u (\opH^{\xi})^{-1/2};
\]
note that the latter operators are trivially $L^2(\RR)$-bounded for any $\xi \in \RR^*$, with norm at most $1$. We shall show that the $\Rz_j^\xi$ are actually $L^p(\RR)$-bounded for all $p \in (1,\infty)$; the case $\xi = 1$ gives the desired boundedness result.

As the Riesz transforms $\Rz_j$ are $L^2$-bounded left-invariant operators on $G$,
the aforementioned intertwining property can be written, in the notation of Lemma \ref{lem:gft_cv2}, as
\[
\Rz^\xi_{j} = M_{k_{\Rz_j}}(\xi).
\]
As we know that $k_{\Rz_j} \in Cv^p(G)$ for all $p \in (1,\infty)$ by Theorem \ref{thm:main}, the $L^p$-boundedness of the Schr\"odinger Riesz transforms $\Rz_j^\xi$ would follow if we could apply the transference result in Proposition \ref{prp:irrep}\ref{en:irrep_transf} with $K = k_{\Rz_j}$. This is not directly possible as $k_{\Rz_j}$ is not integrable; to overcome this, we shall show that $k_{\Rz_j}$ is approximable by integrable kernels satisfying analogous $L^p$ bounds.

As discussed, e.g., in \cite[Section 4.2]{MV}, we can subordinate the Riesz transforms to the heat propagator and obtain that
\[
k_{\Rz_j} = \lim_{n \to \infty} K_{j,n}, \qquad K_{j,n} = \int_{2^{-n}}^{2^{n}} X_j h_t \frac{dt}{\sqrt{\pi t}},
\]
where $h_t$ is the heat kernel of $\opL$, as in Section \ref{s:sqrtasymp} above, and the convergence is in the sense of the strong operator topology on $L^2(G)$; so,
from Lemma \ref{lem:partiallytempered} we deduce that 
\begin{equation}\label{eq:kernel_weak_conv}
K_{j,n} \to k_{\Rz_j} \qquad \text{in } \Sz'(G),
\end{equation}
and moreover, by \cite[Proposition 4.1(v)]{MV}, 
\[
\|K_{j,n}\|_{Cv^2(G)} \leq \|k_{\Rz_j}\|_{Cv^2(G)} \leq 1.
\]
Additionally, the proof of \cite[Theorem 1.1(i)]{MV} shows that each $K_{j,n}$ satisfies the ``Calder\'on--Zygmund'' assumptions of \cite[Theorem 2.3]{MV} uniformly in $n$, whence we deduce a uniform bound
\begin{equation}\label{eq:uniform_Cvp_bound}
\sup_{n} \| K_{j,n} \|_{Cv^p(G)} < \infty
\end{equation}
for all $p \in (1,2]$.

Now, clearly $K_{j,n} \in L^1(G)$, as $\sup_{t \in [2^{-n},2^n]} \|X_j h_t\|_1 < \infty$ (see, e.g., \cite[Proposition 3.1]{MV}). Transference (as stated in Proposition \ref{prp:irrep}\ref{en:irrep_transf} above) can then be applied to the $K_{j,n}$, and from \eqref{eq:uniform_Cvp_bound} we deduce that, for any $\xi \in \RR^*$ and $p \in (1,2]$, 
\[
\sup_{n} \|M_{K_{j,n}}(\xi)\|_{L^p(\RR) \to L^p(\RR)} < \infty.
\]
On the other hand, from \eqref{eq:kernel_ft_formula} and \eqref{eq:kernel_weak_conv} it follows that
\[
H^\xi_{K_{j,n}} \to H_{k_{\Rz_j}}^\xi \qquad\text{in } \Df'(\RR^2).
\]
As $H^\xi_{K_{j,n}}$ and $H_{k_{\Rz_j}}^\xi$ are the Schwartz kernels of $M_{K_{j,n}}$ and $\Rz_j^\xi$ respectively, this allows us to deduce that the $\Rz_j^\xi$ are also $L^p(\RR)$-bounded for all $p \in (1,2]$. In order to conclude, we need to show that the adjoint operators $(\Rz_j^\xi)^*$ satisfy analogous bounds.

Let $\zeta \in C^\infty_c(G)$ be any cutoff, and let $p \in (1,2]$. As $Cv^p(G)$ is a module over $C^\infty_c(G)$ (see Lemma \ref{lem:cvp_module_cinfty}), we conclude that the kernels $\zeta K_{j,n}$ are also in $Cv^p(G)$ with a uniform bound in $n$. Additionally, $\zeta K_{j,n} \to \zeta k_{\Rz_j}$ in $\Sz'(G)$, so $H^\xi_{\zeta K_{j,n}} \to H^\xi_{\zeta \Rz_j}$ in $\Df'(\RR^2)$ for any $\xi \in \RR^*$. Arguing as before, by means of transference we can then deduce that $M_{\zeta k_{\Rz_j}}(\xi)$ is $L^p(\RR)$-bounded for all $p \in (1,2]$ and $\xi \in \RR^*$. 

Now, from Proposition \ref{prp:local_riesz_asymp} we deduce that $\zeta k_{\Rz_j+\Rz_j^*} \in L^1(G)$, thus trivially $M_{\zeta k_{\Rz_j+\Rz_j^*}}(\xi)$ is $L^p(\RR)$-bounded for all $p \in [1,\infty]$, and by difference we deduce that $M_{\zeta k_{\Rz_j^*}}(\xi)$ and $M_{\zeta k_{\Rz_j-\Rz_j^*}}(\xi)$ are $L^p(\RR)$-bounded for $p \in (1,2]$.

In order to conclude that $(\Rz_0^\xi)^*$ and $(\Rz_1^\xi)^*$ are $L^p(\RR)$-bounded for $p \in (1,2]$, it is therefore enough to show that $M_{(1-\zeta) k_{\Rz_0-\Rz_0^*}}(\xi)$ and $M_{(1-\zeta) k_{\Rz_1^*}}(\xi)$ are. If we choose the cutoff $\zeta$ appropriately, as in the proof of Corollary \ref{cor:full_reduction}, then
\[
(1-\zeta) k_{\Rz_0-\Rz_0^*} = c (\tilde K_0 + K_0) + r_0, \qquad (1-\zeta) k_{\Rz_1} = c K_1 + r_1
\]
for some constant $c$ and some $r_0,r_1 \in L^1(G)$. Consequently, we are reduced to showing that $M_{\tilde K_0}(\xi), M_{K_0}(\xi), M_{K_1(\xi)}$ are $L^p$-bounded for $p \in (1,2]$. While it would be possible to use transference for this as well, here we can proceed more directly.

Indeed, from the expression for $\tilde K_0$ in Proposition \ref{prp:kernel_reduction}, we have that
\[
H_{\tilde K_0}^\xi(u,u') = (\Four r_0)(e^{u'} \xi) \frac{\chi_{|u-u'| \geq 1}}{u-u'}.
\]
In other words, the operator $M_{\tilde K_0}(\xi)$ on $\RR$ is the composition of the operator of convolution by $u \mapsto \frac{\chi_{|u| \geq 1}}{u}$ and the operator of multiplication by $(\Four r_0)(e^{u} \xi)$. The latter is trivially bounded on any $L^p$ as the function $u \mapsto (\Four r_0)(e^{u} \xi)$ is bounded, while the former is $L^p$-bounded for any $p \in (1,\infty)$ as discussed in the proof of Proposition \ref{prp:tildeK0}.

As for the remaining operators $M_{K_0}(\xi)$ and $M_{K_1}(\xi)$, from Section \ref{s:multiplier} we already know that they are $L^2(w)$-bounded for any $w \in A_2(\RR)$, which implies by extrapolation (see, e.g., \cite[Theorem 9.5.3]{Gr2}) that they are also $L^p$-bounded for any $p \in (1,\infty)$.
\end{proof}


\begin{thebibliography}{99}

\bibitem{AL}
G. Alexopoulos and N. Lohou\'e,
Riesz means on Lie groups and Riemannian manifolds of nonnegative curvature,
\emph{Bull. Soc. Math. France} \textbf{122} (1994), 209--223. 


\bibitem{ADY}
J.-P. Anker, E. Damek and C. Yacoub,
Spherical analysis on harmonic $AN$ groups,
\emph{Ann. Scuola Norm. Sup. Pisa Cl. Sci. (4)} \textbf{23} (1996), 643--679.


\bibitem{AO}
J.-P. Anker and P. Ostellari,
The heat kernel on noncompact symmetric spaces,
in: \emph{Lie Groups and Symmetric Spaces}, Amer. Math. Soc. Transl. Ser. 2, vol. 210,
Amer. Math. Soc., Providence, RI, 2003, pp.\ 27--46.


\bibitem{AB}
P. Auscher and B. Ben Ali,
Maximal inequalities and Riesz transform estimates on $L^p$ spaces for Schr\"odinger operators with nonnegative potentials,
\emph{Ann. Inst. Four. (Grenoble)} \textbf{57} (2007), 1975--2013.


\bibitem{ACDH}
P. Auscher, T. Coulhon, X. T. Duong and S. Hoffman,
Riesz transform on manifolds and heat kernel regularity,
\emph{Ann. Sci. \'Ecole Norm. Sup. (4)} \textbf{37} (2004), 911--957.


\bibitem{Ba}
D. Bakry,
The Riesz transforms associated with second order differential operators,
in: \emph{Seminar on Stochastic Processes, 1988 (Gainesville, FL, 1988)},
Progr. Probab., vol. 17, Birkh\"auser Boston, Boston, MA, 1989, pp.\ 1--43.


\bibitem{BPW}
E. Berkson, M. Paluszy\'nski and G. Weiss,
Transference couples and their applications to convolution operators and maximal operators,
in: \emph{Interaction Between Functional Analysis, Harmonic Analysis, and Probability (Columbia, MO, 1994)},
Lecture Notes in Pure and Applied Mathematics, vol. 175, Dekker, New York, 1996, pp. 69--84.


\bibitem{BL}
J. Bergh and J. L\"ofstr\"om,
\emph{Interpolation Spaces. An Introduction},
Grundlehren der mathematischen Wissenschaften, vol. 223, Springer-Verlag, Berlin, 1976.


\bibitem{CW0}
R. R. Coifman and G. Weiss,
\emph{Analyse harmonique non-commutative sur certains espaces homog\`enes},
Lecture Notes in Mathematics, vol. 242, Springer-Verlag, Berlin, 1971.


\bibitem{CW}
R. R. Coifman and G. Weiss,
\emph{Transference Methods in Analysis},
Conference Board of the Mathematical Sciences Regional Conference Series in Mathematics, No. 31,
American Mathematical Society, Providence, R.I., 1976.


\bibitem{CD1}
T. Coulhon and X. T. Duong,
Riesz transforms for $1\leq p \leq 2$,
\emph{Trans. Amer. Math. Soc.} \textbf{351} (1999), 1151--1169.


\bibitem{CD2}
T. Coulhon and X. T. Duong,
Riesz transform and related inequalities on noncompact Riemannian manifolds,
\emph{Comm. Pure Appl. Math.} \textbf{56} (2003), 1728--1751.


\bibitem{C}
M. G. Cowling,
The predual of the space of convolutors on a locally compact group,
\emph{Bull. Aust. Math. Soc.} \textbf{57} (1998), 409--414.


\bibitem{CGGM}
M. G. Cowling, G. I. Gaudry, S. Giulini and G. Mauceri,
Weak type $(1,1)$ estimates for heat kernel maximal functions on Lie groups,
\emph{Trans. Amer. Math. Soc.} \textbf{323} (1991), 637--649.


\bibitem{CGHM}
M. G. Cowling, S. Giulini, A. Hulanicki and G. Mauceri,
Spectral multipliers for a distinguished Laplacian on certain groups of exponential growth,
\emph{Studia Math.} \textbf{111} (1994), 103--121.


\bibitem{DeMa}
L. De Michele and G. Mauceri,
$L^p$ multipliers on the Heisenberg group,
\emph{Michigan Math. J.} \textbf{26} (1979), 361--371.


\bibitem{DM}
J. Dixmier and P. Malliavin,
Factorisations de fonctions et de vecteurs ind\'efiniment diff\'erentiables,
\emph{Bull. Sci. Math. (2)} \textbf{102} (1978), 307--330.


\bibitem{ET}
P. Eymard and M. Terp,
La transformation de Fourier et son inverse sur le groupe des $ax+b$ d'un corps local,
in: \emph{Analyse harmonique sur les groupes de Lie (S\'em. Nancy-Strasbourg 1976-1978), II},
Lecture Notes in Math., vol. 739, Springer, Berlin, 1979, pp. 207--248.


\bibitem{EMOT1}
A. Erd\'elyi, W. Magnus, F. Oberhettinger and F. G. Tricomi,
\emph{Higher Transcendental Functions. Vol. I},
Robert E. Krieger Publishing Co. Inc., Melbourne, Fla., 1981.


\bibitem{FiRu}
V. Fischer and M. Ruzhansky,
Fourier multipliers on graded Lie groups,
\emph{Coll. Math.} \textbf{165} (2021), 1--30.


\bibitem{F}
G. B. Folland,
\emph{A Course in Abstract Harmonic Analysis},
Studies in Advanced Mathematics, CRC Press, Boca Raton, Fl., 1995.


\bibitem{GQS}
G. I. Gaudry, T. Qian and P. Sj\"ogren,
Singular integrals associated to the Laplacian on the affine group $ax+b$,
\emph{Ark. Mat.} \textbf{30} (1992), 259--281.


\bibitem{GS0}
G. I. Gaudry, Garth and P. Sj\"ogren,
Singular integrals on Iwasawa $NA$ groups of rank $1$,
\emph{J. reine angew. Math.} 479 (1996), 39--66. 


\bibitem{GS}
G. I. Gaudry and P. Sj\"ogren,
Haar-like expansions and boundedness of a Riesz operator on a solvable Lie group,
\emph{Math. Z.} \textbf{232} (1999), 241--256.


\bibitem{Gr2}
L. Grafakos,
\emph{Modern Fourier Analysis},
Graduate Texts in Mathematics, vol. 250, Springer, New York, 2009.


\bibitem{HS}
W. Hebisch and T. Steger,
Multipliers and singular integrals on exponential growth groups,
\emph{Math. Z.} \textbf{245} (2003), 37--61.


\bibitem{HvNVW2}
T. Hyt\"onen, J. van Neerven, M. Veraar and L. Weis,
\emph{Analysis in Banach Spaces. Vol. II},
Ergebnisse der Mathematik und ihrer Grenzgebiete, vol. 67, Springer, Cham, 2017.


\bibitem{H3}
L. H\"ormander,
\emph{The Analysis of Linear Partial Differential Operators. III},
Classics in Mathematics, Springer, Berlin, 2007.


\bibitem{Hu}
A. Hulanicki,
On the spectrum of the Laplacian on the affine group of the real line,
\emph{Studia Math.} \textbf{54} (1975/76), 199--204.


\bibitem{JST}
K. Jotsaroop, P. K. Sanjay and S. Thangavelu,
Riesz transforms and multipliers for the Grushin operator,
\emph{J. Anal. Math.} \textbf{119} (2013), 255--273.


\bibitem{Kh}
I. Khalil,
Sur l'analyse harmonique du groupe affine de la droite,
\emph{Studia Math.} \textbf{51} (1974), 139--167.


\bibitem{KW}
M. Kucharski and B. Wr\'obel,
On $L^p$ estimates for positivity-preserving Riesz transforms related to Schr\"odinger operators,
preprint (2022), \texttt{arXiv:2203.15530}.


\bibitem{LMSTV}
M. Levi, A. Martini, F. Santagati, A. Tabacco and M. Vallarino,
Riesz transform for a flow Laplacian on homogeneous trees,
preprint (2022), \texttt{arXiv:2107.06620}.


\bibitem{Li}
X.-D. Li,
Riesz transforms for symmetric diffusion operators on complete Riemannian manifolds,
\emph{Rev. Mat. Iberoam.} \textbf{22} (2006), 591--648.


\bibitem{LM}
N. Lohou\'e and S. Mustapha,
Sur les transform\'ees de Riesz sur les groupes de Lie moyennables et sur certains espaces homog\`enes
\emph{Canad. J. Math.} \textbf{50} (1998), 1090--1104.


\bibitem{MOV}
A. Martini, A. Ottazzi and M. Vallarino,
Spectral multipliers for sub-Laplacians on solvable extensions of stratified groups,
\emph{J. Anal. Math.} \textbf{136} (2018), 357--397.


\bibitem{MV}
A. Martini and M. Vallarino,
Riesz transforms on solvable extensions of stratified groups,
\emph{Studia Math.} \textbf{251} (2021), 175--200.


\bibitem{MueT}
D. M\"uller and C. Thiele,
Wave equation and multiplier estimates on $ax+b$ groups,
\emph{Studia Math.} \textbf{179} (2007), 117--148.


\bibitem{P}
J.-P. Pier,
\emph{Amenable Locally Compact Groups},
John Wiley \& Sons Inc., New York, 1984.


\bibitem{SaV}
F. Santagati and M. Vallarino,
Boundedness of a Riesz transform on the homogeneous tree,
preprint (2022), \texttt{arXiv:2210.07148}.


\bibitem{Sh}
Z. Shen,
$L^p$ estimates for Schr\"odinger operators with certain potentials,
\emph{Ann. Inst. Four. (Grenoble)} \textbf{45} (1995), 513--546.


\bibitem{Si}
A. Sikora,
Riesz transform, Gaussian bounds and the method of wave equation,
\emph{Math. Z.} \textbf{247} (2004), 643--662.


\bibitem{S}
P. Sj\"ogren,
An estimate for a first-order Riesz operator on the affine group,
\emph{Trans. Amer. Math. Soc.} \textbf{351} (1999), 3301--3314.


\bibitem{SV}
P. Sj\"ogren and M. Vallarino,
Boundedness from $H^1$ to $L^1$ of Riesz transforms on a Lie group of exponential growth,
\emph{Ann. Inst. Fourier (Grenoble)} \textbf{58} (2008), 1117--1151.


\bibitem{St0}
E. M. Stein,
Boundary behavior of harmonic functions on symmetric spaces: maximal estimates for Poisson integrals,
\emph{Inv. Math.} \textbf{74} (1983), 63--83.


\bibitem{St}
E. M. Stein,
\emph{Harmonic Analysis: Real-Variable Methods, Orthogonality, and Oscillatory Integrals},
Princeton Mathematical Series, vol. 43, Princeton University Press, Princeton, NJ, 1993.


\bibitem{StW}
E. M. Stein and N. J. Weiss,
On the convergence of Poisson integrals,
\emph{Trans. Amer. Math. Soc.} \textbf{140} (1969), 35--54.


\bibitem{SW}
\v{Z}. \v{S}trkalj and L. Weis,
On operator-valued Fourier multiplier theorems,
\emph{Trans. Amer. Math. Soc.} \textbf{359} (2007), 3529--3547.


\bibitem{Str}
R. S. Strichartz,
Analysis of the Laplacian on the complete Riemannian manifold,
\emph{J. Funct. Anal.} \textbf{52} (1983), 48--79.


\bibitem{T}
E. C. Titchmarsh,
\emph{Eigenfunction Expansions Associated with Second-Order Differential Equations. Part I},
Second Edition, Clarendon Press, Oxford, 1962.


\bibitem{V0}
M. Vallarino,
Spectral multipliers on Damek--Ricci spaces,
\emph{J. Lie Theory} \textbf{17} (2007), 163--189. 


\bibitem{V}
M. Vallarino,
Spaces $H^1$ and BMO on $ax+b$-groups,
\emph{Collect. Math.} \textbf{60} (2009), 277--295.


\bibitem{VSC}
N. Th. Varopoulos, L. Saloff-Coste and T. Coulhon,
\emph{Analysis and Geometry on Groups},
Cambridge Tracts in Mathematics, vol. 100, Cambridge University Press, Cambridge, 1992.


\end{thebibliography}
\end{document}